\newtheorem{theorem}{Theorem}[section]
\newtheorem{corollary}[theorem]{Corollary}
\newtheorem{lemma}[theorem]{Lemma}
\newtheorem{proposition}[theorem]{Proposition}
\theoremstyle{definition}
\newtheorem{definition}[theorem]{Definition}
\theoremstyle{remark} \theoremstyle{remark}
\newtheorem{remark}[theorem]{Remark}
\newtheorem{example}[theorem]{Example}
\numberwithin{equation}{section}
\newcommand{\R}{\mathbb{R}}
\newcommand{\C}{\mathbb{C}}
\newcommand{\X}{\mathfrak{X}}
\newcommand{\D}{\mathcal{D}}
\newcommand{\A}{\mathcal{A}}
\newcommand{\f}{\varphi}
\newcommand{\Lie}{\mathcal{L}}
\newcommand{\calC}{\mathcal{C}}
\newcommand{\Ric}{\mathrm{Ric}}
\def\@fnsymbol#1{\ensuremath{\ifcase#1\or 1\or 2\or \ddagger\or \mathsection\or \mathparagraph\or \|\or \#\or \dagger\dagger\or \ddagger\ddagger \else\@ctrerr\fi}}
\begin{document}
	\title{\textbf{Anti-quasi-Sasakian manifolds}}
	\author{D. Di Pinto\thanks{Dario Di Pinto (corresponding author): {\tt dario.dipinto@uniba.it}} $\,$ and G. Dileo\thanks{Giulia Dileo: {\tt giulia.dileo@uniba.it} 
	\smallskip
			
	$^{1,2}$ Dipartimento di Matematica, Università degli Studi di Bari Aldo Moro, Via E. Orabona 4, 70125 Bari, Italy.}}
	\date{}	
	\maketitle

\begin{abstract}
	We introduce and study a special class of almost contact metric manifolds, which we call anti-quasi-Sasakian (aqS). Among the class of transversely K\"ahler almost contact metric manifolds $(M,\varphi, \xi,\eta,g)$, quasi-Sasakian and anti-quasi-Sasakian manifolds are characterized, respectively, by the $\varphi$-invariance and the $\varphi$-anti-invariance of the $2$-form $d\eta$. A Boothby-Wang type theorem allows to obtain aqS structures on principal circle bundles over K\"ahler manifolds endowed with a closed $(2,0)$-form. We characterize aqS manifolds with constant $\xi$-sectional curvature equal to $1$: they admit an $Sp(n)\times 1$-reduction of the frame bundle such that the manifold is transversely hyperk\"ahler, carrying a second aqS structure and a null Sasakian $\eta$-Einstein structure. We show that aqS manifolds with constant sectional curvature are necessarily flat and cok\"ahler. Finally, by using a metric connection with torsion, we provide a sufficient condition for an aqS manifold to be locally decomposable as the Riemannian product of a K\"ahler manifold and an aqS manifold with structure of maximal rank. 
	Under the same hypothesis, $(M,g)$ cannot be locally symmetric.
\end{abstract}
\bigskip

{\noindent \small \textbf{MSC (2020):} Primary: 53C15, 53D15, 53C25. Secondary: 53C10, 53C26, 53B05.
\smallskip

\noindent	\textbf{Keywords and phrases:} quasi-Sasakian manifold, anti-quasi-Sasakian manifold, transversely K\"ahler structure, $\mathbb{S}^1$-bundle, complex $(2,0)$-form, hyperk\"ahler structure, $\eta$-Einstein, connection with torsion.
}

\tableofcontents

\section*{Introduction}
\addcontentsline{toc}{section}{Introduction}

\hspace{\parindent} Quasi-Sasakian manifolds are a special class of almost contact metric manifolds first introduced by D. E. Blair in \cite{Blair-qS}, and afterwards studied by various authors \cite{Tanno,Kanemaki1,Kanemaki2,Olszak-qS,CM.dN.Y.}. They are normal almost contact metric manifolds $(M,\varphi,\xi,\eta,g)$, whose fundamental $2$-form $\Phi$, defined by $\Phi(X,Y)=g(X,\varphi Y)$, is closed. They include both cok\"ahler and Sasakian manifolds, which satisfy $d\eta=0$ and $d\eta=2\Phi$ respectively. In fact in these two cases the $1$-form $\eta$ attains the minimum and the maximum rank in the sense of the definition given in \cite{Blair-qS}. The normality condition of the structure expresses the integrability of an almost complex structure $J$ defined on the product manifold $M\times\mathbb{R}$: this is equivalent to the vanishing of the tensor field $N_\varphi=[\varphi,\varphi]+d\eta\otimes\xi$, where $[\varphi,\varphi]$ denotes the Nijenhuis torsion of $\varphi$. The Reeb vector field $\xi$ of any quasi-Sasakian manifold is Killing. This, together with normality and $d\Phi=0$, ensures that the structure $(\varphi,g)$ is projectable along the $1$-dimensional foliation generated by $\xi$ and the transverse geometry is K\"ahler (see \cite{CM.dN.Y.}).

In the present paper we want to introduce a new class of almost contact metric manifolds $(M,\varphi,\xi,\eta,g)$ with projectable structure $(\varphi,g)$ and such that the transverse geometry with respect to $\xi$ is given by a K\"ahler structure endowed with a closed $(2,0)$-form. This even dimensional geometry is of particular interest: hyperk\"ahler manifolds are well known examples of K\"ahler manifolds with a (nondegenerate) closed $(2,0)$-form. In general, the nondegeneracy of the $(2,0)$-form forces the dimension of the manifold to be multiple of $4$. By a result of A. Beauville, compact K\"ahler manifolds endowed with a nondegenerate closed $(2,0)$-form (also called complex symplectic) admit a hyperkähler structure (cfr. \cite{Beauville} and \cite[14.B]{Besse}).

In order to define the new class of almost contact metric manifolds, which will be called \textit{anti-quasi-Sasakian manifolds} (aqS manifolds for short), we need to modify the normality condition to an \textit{anti-normal} condition, in such a way that we loose the integrability of the structure $J$ defined on $M\times \mathbb{R}$, but not the projectability of $\varphi$ along $\xi$ and the integrability of the induced transverse almost complex structure. Precisely, we define an anti-quasi-Sasakian manifold as an almost contact metric manifold such that
\[d\Phi=0,\qquad  N_\varphi=2d\eta\otimes\xi.\]
The Reeb vector field $\xi$ is again Killing. It is easily seen that the manifold is both quasi-Sasakian and anti-quasi-Sasakian if and only if it is cok\"ahler. This is coherent with the placing of these manifolds in the Chinea-Gonzalez classification of almost contact metric manifolds.
Indeed, it is known that quasi-Sasakian manifolds coincide with the class $\mathcal{C}_6\oplus\mathcal{C}_7$, and we show that anti-quasi-Sasakian manifolds coincide with manifolds in the class $\mathcal{C}_{10}\oplus\mathcal{C}_{11}$ satisfying the additional condition $\mathcal{L}_\xi\varphi=0$, where $\mathcal{L}_\xi$ denotes the Lie derivative with respect to $\xi$. We characterize manifolds in the class $\mathcal{C}_6\oplus\mathcal{C}_7\oplus\mathcal{C}_{10}\oplus\mathcal{C}_{11}$, calling them \textit{generalized quasi-Sasakian manifolds}, and showing that they are exactly all \textit{transversely K\"ahler} almost contact metric manifolds, provided that $\mathcal{L}_\xi\varphi=0$.

Among the class of transversely K\"ahler almost contact metric manifolds, quasi-Sasakian and anti-quasi-Sasakian manifolds are characterized respectively by
\[d\eta(\varphi X,\varphi Y)=d\eta(X,Y),\qquad d\eta(\varphi X,\varphi Y)=-d\eta(X,Y),\]
that is the $\varphi$-invariance and the $\varphi$-anti-invariance of $d\eta$, which justifies the name for the new class (see Figure \ref{Fig:1}). In fact, for an anti-quasi-Sasakian manifold $(M,\varphi,\xi,\eta,g)$, considering the local Riemannian submersion $\pi:M\to M/\xi$ onto a K\"ahler manifold, $d\eta$ is the projectable $2$-form which induces a closed $(2,0)$-form $\omega$ on the K\"ahler base space. Under the hypothesis that the Reeb vector field is regular with compact orbits, a Boothby-Wang type theorem holds, namely, $M$ is a principal circle bundle over $M/\xi$ and $\eta$ is a connection form, whose curvature is $d\eta=\pi^*\omega$ (Theorem \ref{Thm:Boothby-Wang1}). Conversely, considering a K\"ahler manifold endowed with a closed $(2,0)$-form $\omega$ defining an integral cohomology class, we show that there exists a principal $\mathbb{S}^1$-bundle $M$ endowed with an anti-quasi-Sasakian structure $(\varphi,\xi,\eta,g)$ such that $\eta$ is a connection form with curvature $d\eta=\pi^*\omega$ (Theorem \ref{Thm:Boothby-Wang2}).

If the $1$-form $\eta$ of an aqS manifold is a contact form, or equivalently the transverse $(2,0)$-form is nondegenerate, owing to the $\varphi$-anti-invariance of $d\eta$, the dimension of $M$ turns out to be of type $4n+1$. In the general case, one can consider the $\varphi$-invariant distribution defined by $\mathcal{E}=\ker\eta\cap\ker (d\eta)$. If it is of constant rank $2q$, then $\dim M=2q+4p+1$, where $\eta\wedge (d\eta)^{2p}\ne0$ and $d\eta^{2p+1}=0$: we say that the aqS structure has rank $4p+1$. Obviously cok\"ahler manifolds are aqS manifolds of minimal rank $1$.

Examples of anti-quasi-Sasakian manifolds are discussed in Section \ref{Sec:examples}. Compact nilmanifolds endowed with an aqS structure can be obtained as quotients of a $2$-step nilpotent Lie group, which we call weighted Heisenberg Lie group, with structure constants depending on some weights $\lambda_1,\ldots,\lambda_n$. This Lie group (which is transversely flat), actually carries two anti-quasi-Sasakian structures and one quasi-Sasakian structure; the latter coincides with the well known Sasakian structure of the Heisenberg Lie group when the weights are all equal to $1$. Compact aqS manifolds which are not quotients of the weighted Heisenberg Lie group can be obtained applying the $\mathbb{S}^1$-bundle construction to non-flat compact hyperk\"ahler manifolds with integral $2$-forms (see Example \ref{ex:compact-Cortes}).

Anti-quasi-Sasakian structures of maximal rank arise naturally on a special class of Riemannian manifolds $(M,g)$, for which the structure group of the frame bundle is reducible to $Sp(n)\times1$. Such a reduction is equivalent to the existence of three compatible almost contact structures $(\varphi_i,\xi,\eta)$, $i=1,2,3$, sharing the same Reeb vector field, and satisfying the quaternionic identities $\varphi_i\varphi_j=\varphi_k=-\varphi_j\varphi_i$ for every even permutation $(i,j,k)$ of $(1,2,3)$. If the fundamental $2$-forms satisfy
\[d\Phi_1=0,\qquad d\Phi_2=0,\qquad d\eta=2\Phi_3,\]
we show that the first two structures are anti-normal, and thus anti-quasi-Sasakian, and the third one is normal, and thus Sasakian. We say that $(\f_i,\xi,\eta,g)$ ($i=1,2,3$) is a \textit{double aqS-Sasakian structure}. For $n=1$, it is a special kind of $K$-contact hypo $SU(2)$-structure \cite{dAFFU}.
Double aqS-Sasakian manifolds are transversely hyperk\"ahler along the Reeb foliation, and hence transversely Ricci-flat. In particular, $(\varphi_3,\xi,\eta,g)$ is a null Sasakian $\eta$-Einstein structure \cite{BGM}. It is well known that for any Sasakian manifold, the $\xi$-sectional curvatures, that is, sectional curvatures of $2$-planes containing $\xi$, are all equal to $1$. We will show that the class of double aqS-Sasakian manifolds provides all anti-quasi-Sasakian manifolds of constant $\xi$-sectional curvature $K(\xi,X)=1$.

In general, every anti-quasi-Sasakian manifold $(M,\varphi,\xi,\eta,g)$ admits a triplet of $2$-forms $(\mathcal{A},\Phi,\Psi)$ satisfying
\[d\mathcal{A}=0,\qquad d\Phi=0,\qquad d\eta=2\Psi.\]
Beside $\Phi$, which is the fundamental $2$-form of the structure, $\mathcal{A}$ and $\Psi$ are the $2$-forms associated to the skew-symmetric operators $A:=-\varphi\circ\nabla\xi$ and $\psi:=A\varphi$, both anticommuting with $\varphi$. Here $\nabla$ denotes the Levi-Civita connection of the metric $g$. The spectrum of the symmetric operator $\psi^2=A^2$ encodes information on the Riemannian geometry of the manifold. The requirement for $\psi$ and $A$ to be both almost contact structures, namely $\operatorname{Sp}(\psi^2)=\{0,-1\}$ with $0$ simple eigenvalue, is equivalent to constant $\xi$-sectional curvature $1$, in which case $(A,\varphi,\psi,\xi,\eta,g)$ is a double aqS-Sasakian structure (Theorem \ref{Thm:K(X,xi)=1}).

We will see that for an aqS manifold the condition to be of constant sectional curvature forces the manifold to be flat and cok\"ahler (Theorem \ref{Thm:constant curvature}). This is consequence of general properties of the Riemannian Ricci curvature.
In this regard, assuming the transverse K\"ahler structure of an aqS manifold $(M,\varphi,\xi,\eta,g)$ to be Einstein, we show that $M$ is $\eta$-Einstein if and only if $\operatorname{Sp}(\psi^2)=\{0,-\lambda^2\}$, with $0$ simple eigenvalue and $\lambda$ constant, in which case $M$ is necessarily transversely Ricci-flat (Theorem \ref{Prop.eta-Einstein}). An example of transversely K\"ahler-Einstein aqS manifold which is not transversely Ricci-flat, can be obtained applying the $\mathbb{S}^1$-bundle construction to the complex unit disc $D^2$ with constant holomorphic sectional curvature $c<0$. In this case, the unique nonvanishing eigenvalue of the operator $\psi^2$ is non constant (see Example \ref{example-disc}). Other obstructions to the existence of aqS structures are discussed in the compact and homogeneous cases.

Finally, in the last section of the paper we consider a metric connection $\bar\nabla$ with nonvanishing torsion, adapted to the aqS structure. We prove that the parallelism of $\psi$ with respect to $\bar\nabla$, provides a sufficient condition for the manifold to be locally decomposable as the Riemannian product of a K\"ahler manifold and an aqS manifold of maximal rank. We also prove that, in the non cok\"ahler case, under the hypothesis $\bar\nabla\psi=0$, the Riemannian manifold $(M,g)$ cannot be locally symmetric.

\section{The class of anti-quasi-Sasakian manifolds}
\subsection{Review of almost contact structures}

An \textit{almost contact manifold} is an odd dimensional smooth manifold $M^{2n+1}$ endowed with a $(1,1)$-tensor field $\f$, a vector field $\xi$, called Reeb vector field, and a $1$-form $\eta$ satisfying
$$\f^2=-I+\eta\otimes\xi,\quad \eta(\xi)=1,$$
which imply $\f(\xi)=0$ and $\eta\circ\f=0$. The tangent bundle splits as $TM=\D\oplus\langle\xi\rangle$, where $\langle\xi\rangle=\R\xi$ and $\D:=\ker\eta=\operatorname{Im}\f$ is a hyperplane distribution. In particular $\f^2=-I$ on $\D$. The two distributions $\langle\xi\rangle$ and $\D$ are called \textit{vertical} and \textit{horizontal}, respectively. We will denote by $\Gamma(\D)$ the module of smooth sections of $\D$.
Given an almost contact manifold $(M,\f,\xi,\eta)$, on the product manifold $M\times\R$ one can define an almost complex structure $J$  by
\begin{equation}\label{eq:J_MxR}
	J\Big(X,a\frac{d}{dt}\Big)=\Big(\f X-a\xi,\eta(X)\frac{d}{dt}\Big),
\end{equation}
where $X\in\X(M)$ and $a$ is a differentiable function on $M\times\R$.
$(M,\f,\xi,\eta)$ is called \textit{normal} if $J$ is integrable; this is equivalent to the vanishing of the tensor field $N_\f:=[\f,\f]+d\eta\otimes\xi$, explicitely given by
$$N_\f(X,Y)=[\f X,\f Y]+\f^2[X,Y]-\f[X,\f Y]-\f[\f X,Y]+d\eta(X,Y)\xi$$
for every $X,Y\in\X(M)$. Throughout the paper we will use the following convention for the differential of a $1$-form: $d\eta(X,Y)=X(\eta(Y))-Y(\eta(X))-\eta([X,Y])$.

A Riemannian metric $g$ is \textit{compatible} with the almost contact structure $(\f,\xi,\eta)$ if $g(\f X,\f Y)=g(X,Y)-\eta(X)\eta(Y)$ for every $X,Y\in\X(M)$. With respect to such a metric, $\xi$ is a unit vector field orthogonal to $\D$ and $(\f,\xi,\eta,g)$ is called an \textit{almost contact metric structure} on $M$. The \textit{fundamental $2$-form} associated to the structure is defined by $\Phi(X,Y)=g(X,\f Y)$.
An almost contact metric manifold is called
\begin{itemize}[noitemsep]
	\item[-] \textit{cok\"ahler} if $N_\f=0$, $d\eta=0$, $d\Phi=0$;
	\item[-] \textit{Sasakian} if $N_\f=0$, $d\eta=2\Phi$;
	\item[-] \textit{quasi-Sasakian} if $N_\f=0$, $d\Phi=0$.
\end{itemize}
The condition $d\eta=2\Phi$ defines \textit{contact metric} structures. If in addition $\xi$ is Killing, $M$ is called a \textit{$K$-contact} manifold. In general, the Reeb vector field of any quasi-Sasakian manifold is Killing.

The Levi-Civita connection of an almost contact metric manifold can be expressed by means of the following equation (\cite[Lemma 6.1]{Blair}):
\begin{eqnarray}\label{eq:g(nablaf,.)}
	2g((\nabla_X\f)Y,Z)\nonumber
	&=&d\Phi(X,\f Y,\f Z)-d\Phi(X,Y,Z)+g(N_\f(Y,Z),\f X)\nonumber\\
	&&{}+d\eta(\f Y,Z)\eta(X)-d\eta(\f Z,Y)\eta(X)\\
	&&{}+d\eta(\f Y,X)\eta(Z)-d\eta(\f Z,X)\eta(Y).\nonumber
\end{eqnarray}
In particular, cok\"ahler and Sasakian manifolds are respectively characterized by
$$(\nabla_X\f)Y=0,\qquad (\nabla_X\f)Y=g(X,Y)\xi-\eta(Y)X\qquad \forall X,Y\in\X(M).$$
Finally we recall a technical fact which will be used various times in the following.
\smallskip

\begin{remark}\label{rmk:N(xi,.)-Lie_f}
	In any almost contact manifold $(M,\f,\xi,\eta)$ the condition  $N_\f(\xi,\cdot)=0$ is equivalent to $\Lie_\xi\f=0$, in which case $d\eta(\xi,\cdot)=0$. This is consequence of the following two identities, both obtained by direct computations:
	\begin{equation}\label{eq:N(xi,.)}
		N_\f(\xi,X)=-\f(\Lie_\xi\f)X+d\eta(\xi,X)\xi,
	\end{equation}
	\begin{equation}\label{eq:d_eta(xi,.)}
		d\eta(\xi,X)=\eta((\Lie_\xi\f)\f X),
	\end{equation}
 for every $X\in\X(M)$.
\end{remark}

\subsection{Anti-normal almost contact structures}
The notion of anti-quasi-Sasakian manifolds will involve a condition on the tensor field $N_\f$ for which the integrability of the structure $J$ defined in \eqref{eq:J_MxR} will depend on the rank of the $1$-form $\eta$.
\begin{definition}
We say that an almost contact manifold $(M,\f,\xi,\eta)$ is \textit{anti-normal} if
\begin{equation}\label{eq:anti-normal_new}
	N_\f(X,Y)=2d\eta(X,Y)\xi\quad \forall X,Y\in\X(M).
\end{equation}
It turns out that if the structure is both normal and anti-normal, then $\eta$ is closed, that is, the distribution $\D$ is integrable.
\end{definition}

\begin{proposition}\label{prop-deta-normal}
	 For an anti-normal almost contact manifold $(M,\f,\xi,\eta)$ the following hold:
	\begin{enumerate}[label=(\roman*)]
        \item $d\eta(\xi,\cdot)=0$, or equivalently $\Lie_\xi\eta=0$;
        \item $\Lie_\xi d\eta=0$;
		\item $\Lie_\xi\f=0$;
		\item $d\eta(\f X,\f Y)=-d\eta(X,Y)$, or equivalently $d\eta(\f X,Y)=d\eta(X,\f Y)$ for every $X,Y\in\X(M)$.		
	\end{enumerate}
\end{proposition}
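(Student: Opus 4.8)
The plan is to read off (i) and (iii) by specializing the anti-normal condition \eqref{eq:anti-normal_new} to the Reeb direction, to deduce (ii) for free from (i) via Cartan calculus, and to obtain the $\f$-anti-invariance (iv) by contracting the full anti-normal identity with $\eta$.

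First I would set $X=\xi$ in \eqref{eq:anti-normal_new} and combine it with identity \eqref{eq:N(xi,.)} from Remark \ref{rmk:N(xi,.)-Lie_f}, namely $N_\f(\xi,Y)=-\f(\Lie_\xi\f)Y+d\eta(\xi,Y)\xi$. Equating $-\f(\Lie_\xi\f)Y+d\eta(\xi,Y)\xi=2d\eta(\xi,Y)\xi$ isolates $\f(\Lie_\xi\f)Y=-d\eta(\xi,Y)\xi$. The left-hand side lies in $\operatorname{Im}\f=\D=\ker\eta$, while the right-hand side lies in $\langle\xi\rangle$; since $TM=\D\oplus\langle\xi\rangle$, both sides must vanish. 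In particular $d\eta(\xi,\cdot)=0$, and the equivalence with $\Lie_\xi\eta=0$ is just Cartan's formula $\Lie_\xi\eta=\iota_\xi d\eta+d(\eta(\xi))=d\eta(\xi,\cdot)$. This settles (i). With $d\eta(\xi,\cdot)=0$ in hand, the anti-normal condition at $\xi$ reduces to $N_\f(\xi,\cdot)=0$, so the equivalence stated in Remark \ref{rmk:N(xi,.)-Lie_f} immediately yields $\Lie_\xi\f=0$, which is (iii). Then (ii) follows at once, since $\Lie_\xi$ commutes with the exterior derivative: $\Lie_\xi d\eta=d(\Lie_\xi\eta)=0$ by (i).

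The substantive point is (iv). Here I would apply $\eta$ to both sides of \eqref{eq:anti-normal_new}. The right-hand side gives $2\,d\eta(X,Y)$. On the left I would use $\eta\circ\f=0$ to annihilate the two terms coming from $\f[X,\f Y]$ and $\f[\f X,Y]$, and use $\f^2=-I+\eta\otimes\xi$ to check that $\eta(\f^2[X,Y])=0$; the surviving contributions are $\eta([\f X,\f Y])$ and $d\eta(X,Y)$. Finally, since $\eta(\f X)=\eta(\f Y)=0$, the stated differential convention gives $\eta([\f X,\f Y])=-d\eta(\f X,\f Y)$. Collecting terms turns the contracted identity into $-d\eta(\f X,\f Y)+d\eta(X,Y)=2\,d\eta(X,Y)$, i.e. $d\eta(\f X,\f Y)=-d\eta(X,Y)$. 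The equivalent formulation $d\eta(\f X,Y)=d\eta(X,\f Y)$ then follows by replacing $Y$ with $\f Y$, expanding $\f^2 Y=-Y+\eta(Y)\xi$, and discarding the term $\eta(Y)\,d\eta(\f X,\xi)$, which vanishes by (i).

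I expect (iv) to be the main obstacle, not because of depth but because it hinges on choosing the right contraction: the computation collapses only after one notices that applying $\eta$ to \eqref{eq:anti-normal_new} isolates precisely $d\eta(\f X,\f Y)$ against $d\eta(X,Y)$, with every $\f$-valued bracket killed by $\eta$ and the $\f^2$-term neutralized by $\f^2=-I+\eta\otimes\xi$. The remaining parts are bookkeeping built on Remark \ref{rmk:N(xi,.)-Lie_f} and the splitting $TM=\D\oplus\langle\xi\rangle$.
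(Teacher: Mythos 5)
Your proof is correct and follows essentially the same route as the paper: parts (i) and (iii) come from comparing the anti-normal condition with the identities of Remark \ref{rmk:N(xi,.)-Lie_f} and splitting into horizontal and vertical components, (ii) from the commutation of $\Lie_\xi$ with $d$, and (iv) from contracting $N_\f(X,Y)=2d\eta(X,Y)\xi$ with $\eta$, which is exactly the paper's identity \eqref{eq:eta(N_f)} computed inline. No gaps; the bookkeeping in (iv), including killing the term $\eta(Y)\,d\eta(\f X,\xi)$ via (i), matches the paper's argument.
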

\begin{proof}

Comparing \eqref{eq:N(xi,.)} and \eqref{eq:anti-normal_new}, for every $X\in\X(M)$ we have $-\f(\Lie_\xi\f)X=d\eta(\xi,X)\xi$, which both vanish, being the left hand side horizontal and the right hand side vertical. Using also \eqref{eq:d_eta(xi,.)}, $\Lie_\xi\f=0$, so that (i) and (iii) are proved.
Since the exterior differential commutes with the Lie derivative, by (i) we also have $\Lie_\xi d\eta=0$.
As regards (iv), by the definition of $N_\f$, for every $X,Y\in\X(M)$ one has
\begin{equation}\label{eq:eta(N_f)}
	\eta(N_\f(X,Y))=-d\eta(\f X,\f Y)+d\eta(X,Y).
\end{equation}
Thus, by \eqref{eq:anti-normal_new} we get $d\eta(\f X,\f Y)=-d\eta(X,Y)$ for every $X,Y\in\X(M)$, which is equivalent to $d\eta(\f X,Y)= d\eta(X,\f Y)$ because $d\eta(\xi,\cdot)=0$.
\end{proof}
\smallskip

\begin{remark}
	The defining condition \eqref{eq:anti-normal_new} of an anti-normal structure is equivalent to
	\begin{equation}\label{eq:deta-normal}
		N_\f(\xi,X)=0,\quad N_\f(X,Y)=2d\eta(X,Y)\xi\quad\forall X,Y\in\Gamma(\D).
	\end{equation}
	Indeed, if the structure is anti-normal, from the above proposition $d\eta(\xi,\cdot)=0$, so that $N_\f(\xi,\cdot)=0$. Conversely, assuming \eqref{eq:deta-normal}, by Remark \ref{rmk:N(xi,.)-Lie_f}, again $d\eta(\xi,\cdot)=0$, and hence \eqref{eq:anti-normal_new} holds.
\end{remark}
\medskip

\begin{remark}\label{Rmk:aqS_bis}
	Notice that for any almost contact structure $(\f,\xi,\eta)$, equation \eqref{eq:eta(N_f)} implies:
	\begin{itemize}[noitemsep]
		\item[-] $d\eta$ is $\f$-invariant if and only if $\eta(N_\f(X,Y))=0$, as in the normal case;\smallskip
		\item[-] $d\eta$ is $\f$-anti-invariant if and only if $\eta(N_\f(X,Y))=2d\eta(X,Y)$, as in the anti-normal case.
	\end{itemize}
	In particular, referring to horizontal vector fields, the second equation in \eqref{eq:deta-normal}, is equivalent to $$\eta(N_\f(X,Y))=2d\eta(X,Y),\quad \f(N_\f(X,Y))=0 \quad \forall X,Y\in\Gamma(\D),$$
	and thus to
	$$d\eta(\f X,\f Y)=-d\eta(X,Y),\quad N_\f(X,Y)_\D=0 \quad \forall X,Y\in\Gamma(\D),$$  where $N_\f(X,Y)_\D$ denotes the component along $\D$.
\end{remark}
\medskip

Given an almost contact manifold $(M,\f,\xi,\eta)$, according to the definition of D. E. Blair \cite{Blair-qS}, we say that the 1-form $\eta$ has constant rank $2r$ if $(d\eta)^r\neq0$ and $\eta\wedge(d\eta)^r=0$, and it has constant rank $2r+1$ if $\eta\wedge(d\eta)^r\neq0$ and $(d\eta)^{r+1}=0$. We also say that this is the rank of the almost contact structure $(\f,\xi,\eta)$.
As in the quasi-Sasakian case, the condition $d\eta(\xi,\cdot)=0$ implies that the rank of $\eta$ cannot be even. For an anti-quasi-Sasakian manifold we will prove that $\operatorname{rk}(\eta)=4p+1$. To this aim we prove the following:
\smallskip

\begin{lemma}
	Let $V$ be a finite dimensional real vector space endowed with a complex structure $J$ and a $2$-form $\omega\neq0$ such that
	\begin{equation}\label{eq:omega_J}
		\omega(JX,Y)=\omega(X,JY).
	\end{equation}
	Then there exists a basis $\{u_h,Ju_h,e_k,f_k,Je_k,Jf_k\}$, $h=1,\dots,q$, $k=1,\dots,p$, with respect to which the matrix of $\omega$ is
	$$\begin{pmatrix}
		0_{2q}&0&0&0&0\\
		0&0&I_p&0&0\\
		0&-I_p&0&0&0\\
		0&0&0&0&-I_p\\
		0&0&0&I_p&0
	\end{pmatrix}$$
	In particular $\dim V=2q+4p$, taking $q=0$ if $\omega$ is nondegenerate.
\end{lemma}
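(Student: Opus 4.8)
The plan is to treat this as a problem in symplectic linear algebra twisted by $J$, building the required real basis by first peeling off the kernel and then extracting $4$-dimensional ``quaternionic hyperbolic'' blocks. Two elementary consequences of the hypotheses drive everything. First, combining \eqref{eq:omega_J} with the skew-symmetry of $\omega$ gives, for every $X$,
$$\omega(X,JX)=\omega(JX,X)=-\omega(X,JX),$$
hence $\omega(X,JX)=0$: every $J$-invariant plane $\langle X,JX\rangle$ is $\omega$-isotropic. Second, the radical $K=\{v\in V:\omega(v,\cdot)=0\}$ is $J$-invariant, since $v\in K$ forces $\omega(Jv,w)=\omega(v,Jw)=0$ for all $w$. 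Thus $\dim K=2q$ is even, and I would fix a $J$-adapted basis $\{u_h,Ju_h\}_{h=1}^{q}$ of $K$; on $K$ the form $\omega$ vanishes identically, producing the $0_{2q}$ block. Choosing a $J$-invariant complement $V'$ of $K$, the restriction $\omega|_{V'}$ is nondegenerate and still satisfies \eqref{eq:omega_J}, so it remains to normalize $\omega$ on $V'$.

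On $V'$ I would argue by induction on the dimension. Pick any $e\neq 0$ in $V'$. Since $\omega(e,Je)=0$ while $\omega|_{V'}$ is nondegenerate, the two functionals $\omega(e,\cdot)$ and $\omega(Je,\cdot)$ are linearly independent, so one can choose $f\in V'$ with $\omega(e,f)=1$ and $\omega(Je,f)=0$. The anti-invariance \eqref{eq:omega_J} then pins down the whole $4$-plane $H=\langle e,Je,f,Jf\rangle$: one gets $\omega(Je,Jf)=\omega(e,J^2f)=-\omega(e,f)=-1$, $\omega(e,Jf)=\omega(Je,f)=0$, while $\omega(e,Je)=\omega(f,Jf)=0$ by isotropy. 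Reordering the four vectors as $(e;f;Je;Jf)$ yields exactly the $4\times 4$ block $\left(\begin{smallmatrix}0&1&0&0\\-1&0&0&0\\0&0&0&-1\\0&0&1&0\end{smallmatrix}\right)$ of the target matrix. Passing to the $\omega$-orthogonal complement $H^{\perp_\omega}$, which is again $J$-invariant (if $w\perp H$ then $\omega(Jw,h)=\omega(w,Jh)=0$ because $Jh\in H$) and nondegenerate, I repeat the construction. The process cannot terminate with a $2$-dimensional leftover, for a $J$-invariant plane $\langle X,JX\rangle$ carries $\omega(X,JX)=0$ and would be degenerate; hence $\dim V'=4p$, and assembling the blocks in the order $(u_h,Ju_h;e_k;f_k;Je_k;Jf_k)$ gives the displayed matrix, with $\dim V=2q+4p$ and $q=0$ exactly when $\omega$ is nondegenerate.

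Conceptually, the normal form is the real shadow of a complex Darboux basis: extending $J$ and $\omega$ to $V\otimes\C=V^{1,0}\oplus V^{0,1}$, condition \eqref{eq:omega_J} is equivalent to $\omega$ being of type $(2,0)+(0,2)$, i.e.\ $\omega$ vanishes on the mixed part $V^{1,0}\times V^{0,1}$, and its $(2,0)$-part is an alternating $\C$-bilinear form on $V^{1,0}$; a complex symplectic basis of the latter, together with its conjugate, descends to the real basis $\{e_k,Je_k,f_k,Jf_k,u_h,Ju_h\}$ above. This is precisely the link with the closed $(2,0)$-form of the K\"ahler base, and explains a posteriori why the dimension is forced to be $4p$ modulo the kernel. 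I expect the only genuinely delicate point to be the inductive step: one must choose the symplectic partner $f$ so that simultaneously $\omega(e,f)=1$ and $\omega(Je,f)=0$, and then check that the resulting block $H$ is $\omega$-nondegenerate so that $V'=H\oplus H^{\perp_\omega}$ and the induction closes. Once the identity $\omega(X,JX)=0$ is in hand, every other entry of the Gram matrix follows mechanically from \eqref{eq:omega_J}.
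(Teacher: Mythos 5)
Your proof is correct and follows essentially the same route as the paper's: split off the $J$-invariant radical, then inductively extract $4$-dimensional blocks $\langle e,Je,f,Jf\rangle$ using the $J$-invariance and nondegeneracy of successive $\omega$-orthogonal complements. The only real difference is cosmetic: you get the partner $f$ with $\omega(e,f)=1$, $\omega(Je,f)=0$ in one step from the linear independence of the functionals $\omega(e,\cdot)$ and $\omega(Je,\cdot)$ (which, note, follows from nondegeneracy together with the linear independence of $e$ and $Je$, not from $\omega(e,Je)=0$), whereas the paper first takes any $f_1$ with $\omega(e_1,f_1)=1$ and then corrects it to $f_1'=\frac{1}{1+k^2}f_1+\frac{k}{1+k^2}Jf_1$ with $k=\omega(e_1,Jf_1)$.
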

\begin{proof}
	Let $U:=\{u\in V\ |\ \omega(u,\cdot)=0\}$. If $\omega$ is nondegenerate, $U$ is trivial. Otherwise, being $JU=U$ by \eqref{eq:omega_J}, $U$ admits a basis $\{u_h,Ju_h\}$, $h=1,\dots,q$.
	Now let $W$ be a complementary space to $U$ in $V$: since $\omega$ is nondegenerate on $W$, there exist $e_1,f_1\in W$ such that $\omega(e_1,f_1)=1=-\omega(Je_1,Jf_1)$. By the skew-symmetry of $\omega$ and \eqref{eq:omega_J} one has:
	$$\omega(e_1,e_1)=\omega(f_1,f_1)=0,\quad \omega(e_1,Je_1)=\omega(f_1,Jf_1)=0.$$
	Putting $k=\omega(e_1,Jf_1)$, up to replacing $f_1$ with $f'_1:=\frac{1}{1+k^2}f_1+\frac{k}{1+k^2}Jf_1$,  $f_1$ can be chosen such that $$\omega(e_1,Jf_1)=\omega(Je_1,f_1)=\omega(f_1,Je_1)=\omega(Jf_1,e_1)=0.$$
	One can check that $e_1,f_1,Je_1,Jf_1$ are linearly independent and span a linear subspace $W_1$. If $\dim V>2q+4$, one can decompose $W=W_1\oplus W_1^\omega$, where $W_1^\omega:=\{w\in W\ |\ \omega(w,v)=0\ \forall v\in W_1\}$ is $J$-invariant because of the $J$-invariance of $W_1$ and \eqref{eq:omega_J}. As before one can choose $e_2,f_2\in W_1^\omega$ such that $\omega(e_2,f_2)=1$ and $\omega(e_2,Jf_2)=0$, and define a second 4-dimensional vector space $W_2$ spanned by $e_2,f_2,Je_2,Jf_2$. Iterating the argument one gets the result.
\end{proof}
\begin{proposition}\label{Prop.rank}
	Let $(M,\f,\xi,\eta)$ be an anti-normal almost contact manifold of constant rank. Then $\operatorname{rk}(\eta)=4p+1$ and $\dim M=2q+4p+1$, where $2q$ is the rank of the subbundle $\mathcal{E}$ of $TM$ defined by $\mathcal{E}_x:=\{X\in\D_x\ |\ d\eta_x(X,\cdot)=0\}$ for every $x\in M$.
\end{proposition}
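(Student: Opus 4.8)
The plan is to apply the preceding Lemma fibrewise. Fix a point $x\in M$ and take $V=\D_x$, which is $2n$-dimensional since $\dim M=2n+1$. The restriction $J:=\f_x|_{\D_x}$ is a complex structure because $\f(\D)\subseteq\D$ (as $\eta\circ\f=0$) and $\f^2=-I$ on $\D$, and I set $\omega:=d\eta_x|_{\D_x\times\D_x}$. By part (iv) of Proposition \ref{prop-deta-normal}, anti-normality gives $d\eta(\f X,Y)=d\eta(X,\f Y)$, which is exactly the compatibility hypothesis $\omega(JX,Y)=\omega(X,JY)$ required by the Lemma. If $\omega\ne 0$, the Lemma furnishes a basis $\{u_h,Ju_h,e_k,f_k,Je_k,Jf_k\}$ of $\D_x$ of the stated form, with $\dim\D_x=2q_x+4p_x$; if instead $\omega=0$, equivalently $d\eta_x=0$ using $d\eta(\xi,\cdot)=0$ from (i), I set $p_x=0$ and $q_x=n$. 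In either case $\dim M=2q_x+4p_x+1$.

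Next I would identify the subbundle $\mathcal{E}$ with the radical of $\omega$. Since $d\eta(X,\xi)=-d\eta(\xi,X)=0$ for every $X$ by (i), the condition $d\eta_x(X,\cdot)=0$ on all of $TM$ is equivalent to $\omega(X,\cdot)=0$ on $\D_x$. Hence $\mathcal{E}_x$ coincides with the subspace $U=\{u\in V\ |\ \omega(u,\cdot)=0\}$ from the proof of the Lemma, which is spanned by $\{u_h,Ju_h\}_{h=1}^{q_x}$ and therefore has dimension $2q_x$.

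It remains to compute the rank of $\eta$, and here the block form of $\omega$ does the work. Writing $\D_x=U\oplus W$, where $W$ is the $4p_x$-dimensional complement on which $\omega$ is nondegenerate, the displayed matrix shows that $(d\eta_x)^{2p_x}$ is, up to a nonzero scalar, the volume form of $W$, so that $(d\eta_x)^{2p_x}\ne 0$ while $(d\eta_x)^{2p_x+1}=0$ for dimensional reasons (a $(4p_x+2)$-form on a $4p_x$-dimensional space). Since $\eta_x(\xi)=1$ and $(d\eta_x)^{2p_x}$ is nonzero on $W$, one also gets $\eta\wedge(d\eta)^{2p_x}\ne 0$. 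By the rank convention recalled before the Lemma, with $r=2p_x$, this means $\operatorname{rk}(\eta)=4p_x+1$ at $x$. The constant-rank hypothesis provides a single integer $r$ with $(d\eta)^{r+1}\equiv 0$ and $\eta\wedge(d\eta)^r$ nowhere vanishing; matching this with the pointwise value forces $r=2p_x$ for every $x$, so that $p_x\equiv p$ and hence $q_x=(\dim M-1-4p)/2\equiv q$ are globally constant. This is what upgrades $\mathcal{E}$ to a genuine subbundle of constant rank $2q$ and yields $\operatorname{rk}(\eta)=4p+1$ and $\dim M=2q+4p+1$.

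The only genuinely delicate point I anticipate is the passage from the pointwise algebra to the global statement: a priori the integers $p_x,q_x$ could vary with $x$, and it is the constant-rank assumption, phrased through $\eta$, that pins them down. In writing this up I would emphasize that the parity constraint — the nondegenerate part of $d\eta$ having real dimension divisible by $4$, which is forced by the $\f$-anti-invariance in (iv) — is precisely what distinguishes the anti-normal case from the normal one, where $d\eta|_\D$ is instead of type $(1,1)$ and the rank is of the unrestricted form $2r+1$.
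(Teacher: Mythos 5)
Your proof is correct and follows essentially the same route as the paper's: apply the preceding Lemma fibrewise to $(\D_x,\f|_{\D_x},d\eta_x)$, with the compatibility hypothesis supplied by part (iv) of Proposition \ref{prop-deta-normal}, and then use $d\eta(\xi,\cdot)=0$ to pass from the rank of $d\eta|_{\D}$ to $\operatorname{rk}(\eta)=4p+1$. The extra care you take — handling the $d\eta_x=0$ case, identifying $\mathcal{E}_x$ with the radical of $\omega$, and invoking the constant-rank hypothesis to make $p_x$ and $q_x$ globally constant — only makes explicit details that the paper's terser proof leaves implicit.
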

\begin{proof}
	Consider the splitting $TM=\D\oplus\langle\xi\rangle$ of the tangent bundle of $M$. In view of (iv) of Proposition \ref{prop-deta-normal}, at every $x\in M$, $d\eta_x$ is a 2-form satisfying \eqref{eq:omega_J} on the complex vector space $(\D_x,J_x:=\f|_{\D_x})$. Thus, by the previous lemma $\dim\D_x=2q+4p$ and $d\eta_x|_{\D_x}$ has rank $4p$, i.e. $(d\eta_x)^{2p}\neq0$ and $(d\eta_x)^{2p+1}=0$ on $\D_x$. Moreover, since $d\eta(\xi,\cdot)=0$, one has that $\eta\wedge(d\eta)^{2p}\neq0$ and $(d\eta)^{2p+1}=0$, namely $\operatorname{rk}(\eta)=4p+1$.
\end{proof}
\smallskip

\begin{remark}
	Unless $d\eta=0$, i.e. $\operatorname{rk}(\eta)=1$, an anti-normal almost contact manifold has dimension at least $5$.
\end{remark}
\medskip

We will see now how the rank of the 1-form $\eta$ measures the non integrability of the almost complex structure $J$ defined in \eqref{eq:J_MxR}.\\

Let $(M,\f,\xi,\eta)$ be an almost contact manifold. Consider the splitting $TM=\D\oplus\langle\xi\rangle$ and the endomorphism $J_\D=\f|_\D$ satisfying $J_\D^2=-I$. By complexification, one has
$$TM^\C=\D^\C\oplus\C\xi=\D^{1,0}\oplus\D^{0,1}\oplus\C\xi,$$
where $\D^{1,0}$ and $\D^{0,1}$ are the eigendistributions associated to eigenvalues $i$ and $-i$ of $J_\D^\C$. One can easily verify that $(\f,\xi,\eta)$ is a normal structure if and only if
$$[\xi,\D^{1,0}]\subset\D^{1,0},\quad [\D^{1,0},\D^{1,0}]\subset \D^{1,0}.$$
In the case of anti-normal structures we have the following:
\smallskip

\begin{proposition}
	An almost contact manifold $(M,\f,\xi,\eta)$ is anti-normal if and only if
	\begin{equation}\label{eq:brackets}
		[\xi,\D^{1,0}]\subset \D^{1,0},\quad [\D^{1,0},\D^{1,0}]_{\D^\C}\subset \D^{1,0},\quad [\D^{1,0},\D^{0,1}]\subset\D^\C.
	\end{equation}
	where $[\cdot,\cdot]_{\D^\C}$ denotes the component along $\D^\C$.
\end{proposition}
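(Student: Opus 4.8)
The plan is to complexify and test the defining identity \eqref{eq:anti-normal_new} on the three summands of $TM^\C=\D^{1,0}\oplus\D^{0,1}\oplus\C\xi$, exactly as for the normal case recalled above. Since $N_\f$, $d\eta$, $\eta$ and $\xi$ are real tensors and $N_\f$ is skew-symmetric, the equation $N_\f(X,Y)=2d\eta(X,Y)\xi$ holds for all real $X,Y$ if and only if it holds for all complexified arguments; and because $\D^{0,1}=\overline{\D^{1,0}}$, it suffices to verify it on the three essential pair-types $(\xi,Z)$, $(Z,W)$ and $(Z,\bar W)$ with $Z,W\in\D^{1,0}$ (the pairs $(\xi,\D^{0,1})$ and $(\D^{0,1},\D^{0,1})$ follow by complex conjugation, and $(\xi,\xi)$ is trivial). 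Throughout I will use that $\f$ acts as $i$ on $\D^{1,0}$ and as $-i$ on $\D^{0,1}$, together with $\f\xi=0$, $\f^2=-I+\eta\otimes\xi$, $\eta(\xi)=1$, $\eta|_{\D^\C}=0$, and the elementary consequence $d\eta(X,Y)=-\eta([X,Y])$ whenever $\eta(X)=\eta(Y)=0$.

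First I would expand $N_\f(\xi,Z)$ from its definition. Inserting $\f\xi=0$, $\f Z=iZ$, using $\f^2=-I+\eta\otimes\xi$, and decomposing $[\xi,Z]$ along $\D^{1,0}\oplus\D^{0,1}\oplus\C\xi$, the constant factors $i$ collapse and give $N_\f(\xi,Z)=-2[\xi,Z]_{\D^{0,1}}-\eta([\xi,Z])\xi$, where $[\,\cdot,\cdot\,]_{\D^{0,1}}$ denotes the $\D^{0,1}$-component; meanwhile $2d\eta(\xi,Z)\xi=-2\eta([\xi,Z])\xi$ is purely vertical. Comparing horizontal and vertical parts forces simultaneously $[\xi,Z]_{\D^{0,1}}=0$ and $\eta([\xi,Z])=0$, i.e. $[\xi,Z]\in\D^{1,0}$, which is the first condition in \eqref{eq:brackets}. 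The same bookkeeping applied to $N_\f(Z,W)$ with $Z,W\in\D^{1,0}$ yields $N_\f(Z,W)=-4[Z,W]_{\D^{0,1}}-2\eta([Z,W])\xi$, to be matched against $2d\eta(Z,W)\xi=-2\eta([Z,W])\xi$; here the vertical parts cancel automatically and only the horizontal part survives, giving exactly $[Z,W]_{\D^{0,1}}=0$, that is the second condition $[\D^{1,0},\D^{1,0}]_{\D^\C}\subset\D^{1,0}$.

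The most instructive case is the mixed type $(Z,\bar W)$. Expanding $N_\f(Z,\bar W)$ with $\f Z=iZ$ and $\f\bar W=-i\bar W$, the products of the constants $i$ and $-i$ make the $[Z,\bar W]$-terms cancel against the $\f^2$-term and the two $\f[Z,\bar W]$-terms cancel each other, so that $N_\f(Z,\bar W)=\big(\eta([Z,\bar W])+d\eta(Z,\bar W)\big)\xi=0$ identically, for \emph{any} almost contact structure. Hence on this pair the defining identity reduces to $0=2d\eta(Z,\bar W)\xi=-2\eta([Z,\bar W])\xi$, i.e. to $\eta([Z,\bar W])=0$, which is precisely the third condition $[\D^{1,0},\D^{0,1}]\subset\D^\C$. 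Collecting the three equivalences proves that anti-normality is equivalent to \eqref{eq:brackets}.

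The only genuine obstacle is the careful algebra in the $(2,0)$-type term: one must track the constant factors $i$ produced by $\f$ and the $\xi$-contribution hidden in $\f^2=-I+\eta\otimes\xi$, and then read off the three tensorial pieces (along $\D^{1,0}$, along $\D^{0,1}$, and along $\C\xi$). The conceptual point worth emphasizing is that the mixed $(1,1)$-type component of $N_\f$ vanishes for every almost contact structure; consequently, unlike the normal case, where the pair $(Z,\bar W)$ imposes no constraint, here the condition $[\D^{1,0},\D^{0,1}]\subset\D^\C$ is forced not by $N_\f$ itself but by the vertical part of \eqref{eq:anti-normal_new}, which is exactly the $\f$-anti-invariance of $d\eta$ recorded in Proposition \ref{prop-deta-normal}(iv).
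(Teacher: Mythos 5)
Your proof is correct: all three computations check out (indeed $N_\f(\xi,Z)=-2[\xi,Z]_{\D^{0,1}}-\eta([\xi,Z])\xi$, $N_\f(Z,W)=-4[Z,W]_{\D^{0,1}}-2\eta([Z,W])\xi$, and $N_\f(Z,\bar W)=0$ identically), and the preliminary reduction is sound, since $N_\f-2d\eta\otimes\xi$ is a real tensor, so it vanishes if and only if its $\C$-bilinear extension does, and conjugation symmetry cuts the pair types down to the three you list. Your route is, however, organized genuinely differently from the paper's. The paper never complexifies the identity \eqref{eq:anti-normal_new} itself; it proves each bracket condition in \eqref{eq:brackets} equivalent to a \emph{real} condition --- the first to $\Lie_\xi\f=0$ (via Remark \ref{rmk:N(xi,.)-Lie_f}), the second to $\f(N_\f(X,Y))=0$ for $X,Y\in\Gamma(\D)$, the third to the $\f$-anti-invariance of $d\eta$ --- by writing complex fields explicitly as $Z=X-i\f X$, $W=Y\mp i\f Y$ and separating real and imaginary parts; it then reassembles these pieces into anti-normality through the previously established reformulation \eqref{eq:deta-normal} and Remark \ref{Rmk:aqS_bis}. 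You instead evaluate the complexified defining identity directly on abstract sections with $\f Z=iZ$, $\f\bar W=-i\bar W$, so that each pair type delivers exactly one bracket condition and no intermediate reformulation is needed. What your version buys: it is uniform and self-contained, and it isolates the structural fact that the mixed $(1,1)$-component of $N_\f$ vanishes for \emph{every} almost contact structure, which makes transparent why the third condition in \eqref{eq:brackets} is forced purely by the vertical term $2d\eta\otimes\xi$ (equivalently, by Proposition \ref{prop-deta-normal}(iv)) rather than by $N_\f$. What the paper's version buys: the intermediate real conditions it passes through ($\f$-anti-invariance of $d\eta$ and vanishing of the horizontal part of $N_\f$) are precisely the ones reused elsewhere (Remark \ref{Rmk:aqS_bis}, Figure \ref{Fig:1}, the transverse-geometry statements), so deriving them explicitly has independent value in the paper's economy.
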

\begin{proof}
	We show that \eqref{eq:brackets} is equivalent to \eqref{eq:deta-normal}. The first equation in \eqref{eq:brackets} is equivalent to $N_\f(\xi,\cdot)=0$. Indeed, for any $Z=X-i\f X\in\Gamma(\D^{1,0})$, with $X\in\Gamma(\D)$, we have
	$$[\xi,Z]\in\Gamma(\D^{1,0})\ \Leftrightarrow\ [\xi,\f X]= \f[\xi,X]\ \Leftrightarrow\ (\Lie_\xi\f)X=0,$$
	namely, $N_\f(\xi,X)=0$ by Remark \ref{rmk:N(xi,.)-Lie_f}.
	Now we claim that the second and the third equations in \eqref{eq:brackets} are equivalent, respectively, to $\f(N_\f(X,Y))=0$ and $d\eta(\f X,\f Y)=-d\eta(X,Y)$ for every $X,Y\in\Gamma(\D)$, which in turn are equivalent to $N_\f(X,Y)=2d\eta(X,Y)\xi$ (see Remark \ref{Rmk:aqS_bis}).\\
	Let $Z,W\in\Gamma(\D^{1,0})$, with $Z=X-i\f X$ and $W=Y-i\f Y$, for some $X,Y\in\Gamma(\D)$, and let us denote by $\eta^\C$ the $\C$-linear extension of $\eta$. Then
	\begin{eqnarray*}
		[Z,W]_{\D^\C}&=&[Z,W]-\eta^\C[Z,W]\xi\\
		&=&[X,Y]-[\f X,\f Y]-i([X,\f Y]+[\f X,Y])\\
		&&-\eta([X,Y]-[\f X,\f Y])\xi+i\eta([X,\f Y]+[\f X,Y])\xi.
	\end{eqnarray*}
	Therefore:
	\begin{eqnarray*}
		[Z,W]_{\D^\C}\in\D^{1,0}&\Leftrightarrow&\Im [Z,W]_{\D^\C}=-\f\Re [Z,W]_{\D^\C}\\
		&\Leftrightarrow&-[X,\f Y]-[\f X,Y]+\eta[X,\f Y]\xi+\eta[\f X,Y]\xi=-\f[X,Y]+\f[\f X,\f Y]\\
		&\Leftrightarrow&\f^2[X,\f Y]+\f^2[\f X,Y]=\f[\f X,\f Y]-\f[X,Y]\\
		&\Leftrightarrow&\f(N_\f(X,Y))=0.
	\end{eqnarray*}
Analogously, taking $Z=X-i\f X\in\Gamma(\D^{1,0})$ and $W=Y+i\f Y\in\Gamma(\D^{0,1})$ for some $X,Y\in\Gamma(\D)$ one has:
\begin{eqnarray*}
	[Z,W]\in\D^\C&\Leftrightarrow&\eta^\C[Z,W]=0\\
	&\Leftrightarrow&\eta([X,Y]+[\f X,\f Y])+i\eta([X,\f Y]-[\f X,Y])=0\\
	&\Leftrightarrow&d\eta(X,Y)+d\eta(\f X,\f X)=0.
\end{eqnarray*}
\end{proof}

\begin{remark}
	Differently from the case of normal structures, here the complex distribution $\D^{1,0}$  in general is not involutive: the commutator of any complex fields of type $(1,0)$ $Z=X-i\f X$, $W=Y-i\f Y$, has a component along $\xi$ given by
	\begin{eqnarray*}
		\eta^\C([Z,W])\xi&=&\eta([X,Y]-[\f X,\f Y])\xi-i\eta([X,\f Y]+[\f X,Y])\xi\\
		&=&{}-2d\eta(X,Y)\xi+2id\eta(X,\f Y)\xi,
	\end{eqnarray*}
 where we applied the $\f$-anti-invariance of $d\eta$. Therefore $\D^{1,0}$ is involutive if and only if the $d\eta=0$.\\
 The rank of $\eta$ represents an obstruction even for the integrability of the almost complex structure $J$ on $M\times\R$ defined in \eqref{eq:J_MxR}. Indeed, following \cite[\S 6.1]{Blair}, an easy computation shows that the Nijenhuis tensor is given by
 $$[J,J]\Big(\big(X,a\frac{d}{dt}\big),\big(Y,b\frac{d}{dt}\big)\Big)=\Big(2d\eta(X,Y)\xi,2d\eta(X,\f Y)\frac{d}{dt}\Big).$$

\end{remark}

\subsection{Anti-quasi-Sasakian manifolds}
\begin{definition}\label{Def:aqS}
	We define an \textit{anti-quasi-Sasakian manifold} (aqS manifold for short) as an anti-normal almost contact metric manifold $(M,\f,\xi,\eta,g)$ with closed fundamental 2-form, i.e.
	\begin{equation}\label{eq:aqS}
		d\Phi=0,\quad N_\f=2d\eta\otimes\xi.
	\end{equation}
\end{definition}

Notice that cok\"ahler structures are aqS structures with $\operatorname{rk}(\eta)=1$. In fact the class of cok\"ahler manifolds represents the intersection between quasi-Sasakian and anti-quasi-Sasakian manifolds.
Regarding quasi-Sasakian manifolds, we recall that they are characterized by means of the following identity involving the covariant derivative of the structure tensor $\f$ with respect to the Levi-Civita connection:	
\begin{equation}\label{qS}
	(\nabla_X\f)Y=\eta(Y)AX-g(X,AY)\xi,
\end{equation}	
where $A$ is a $(1,1)$-tensor field such that $g(AX,Y)=g(X,AY)$ and $A\f=\f A$, given by $A=-\f\circ\nabla\xi+k\eta\otimes\xi$, for some smooth function $k$ (see \cite{Kanemaki1,Kanemaki2}).
We prove now a characterization theorem for aqS manifolds.

\begin{theorem}\label{Thm:char.aqS}
An almost contact metric manifold $(M,\f,\xi,\eta,g)$ is anti-quasi-Sasakian if and only if there exists a skew-symmetric $(1,1)$-tensor field $A$ such that $A\f=-\f A$ and
\begin{equation}\label{eq:aqS.char}
	(\nabla_X\f)Y=2\eta(X)AY+\eta(Y)AX+g(X,AY)\xi
\end{equation}
for every $X,Y\in\X(M)$. The tensor field $A$ is uniquely determined by $A=-\f\circ\nabla\xi$.
\end{theorem}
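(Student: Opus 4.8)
The plan is to prove both implications by exploiting the universal formula \eqref{eq:g(nablaf,.)} for the Levi-Civita connection of an almost contact metric manifold, specializing it under the aqS hypotheses \eqref{eq:aqS}. First I would establish the forward direction. Assuming $(M,\f,\xi,\eta,g)$ is aqS, I define $A:=-\f\circ\nabla\xi$ and verify its two claimed algebraic properties. Skew-symmetry of $A$ follows from the fact that $\xi$ is Killing (recorded in Proposition \ref{prop-deta-normal} via $\Lie_\xi\f=0$ and $d\eta(\xi,\cdot)=0$, which together with $d\Phi=0$ force $\nabla\xi$ to be skew-symmetric): since $\nabla\xi$ is skew and $\f$ is skew with respect to $g$ on $\D$, the composition $-\f\circ\nabla\xi$ is again skew-symmetric. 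The anticommutation $A\f=-\f A$ should drop out of the $\f$-anti-invariance of $d\eta$ from Proposition \ref{prop-deta-normal}(iv), after rewriting $d\eta$ in terms of $\nabla\xi$ using $d\eta(X,Y)=g(\nabla_X\xi,Y)-g(\nabla_Y\xi,X)=2g(\nabla_X\xi,Y)$.

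The core computation is then to substitute $d\Phi=0$ and $N_\f=2d\eta\otimes\xi$ into \eqref{eq:g(nablaf,.)} and simplify. With $d\Phi=0$, the first two terms vanish. The Nijenhuis term becomes $g(N_\f(Y,Z),\f X)=2d\eta(Y,Z)g(\xi,\f X)=0$, since $\f X\in\D=\ker\eta$ and $g(\xi,\f X)=\eta(\f X)=0$; so that term drops as well. What remains is the four $d\eta$-terms carrying explicit factors of $\eta$. I would convert each $d\eta(\f\,\cdot,\cdot)$ into an expression involving $A$ by using $d\eta(X,Y)=2g(\nabla_X\xi,Y)$ together with $AX=-\f\nabla_X\xi$, hence $\nabla_X\xi=\f AX$ (using $A\xi=0$ and $\f^2=-I$ on $\D$), which gives $d\eta(X,Y)=2g(\f AX,Y)=-2g(AX,\f Y)=2g(A\f Y, X)$ after applying $A\f=-\f A$. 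Carefully tracking the four terms and matching the $\eta(X)$, $\eta(Y)$, $\eta(Z)$ placements should reproduce exactly $2g((\nabla_X\f)Y,Z)=4\eta(X)g(AY,Z)+2\eta(Y)g(AX,Z)+2g(X,AY)\eta(Z)$, i.e. \eqref{eq:aqS.char} after dividing by $2$ and using skew-symmetry of $A$ to place indices.

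For the converse, I would assume \eqref{eq:aqS.char} holds for some skew-symmetric $A$ with $A\f=-\f A$ and recover the aqS conditions. Setting $Y=\xi$ in \eqref{eq:aqS.char} and using $\f\xi=0$ yields $\nabla_X\xi=-\f((\nabla_X\f)\xi)$-type identities that identify $\nabla_X\xi$ with $\f AX$, forcing $A=-\f\circ\nabla\xi$ and thereby pinning down $A$ uniquely. From the explicit form of $(\nabla_X\f)Y$ one computes $d\Phi$ by antisymmetrizing $g((\nabla_X\f)Y,Z)$ over $X,Y,Z$; the three cyclic terms should cancel using the skew-symmetry of $A$ and the identity $A\f=-\f A$, giving $d\Phi=0$. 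Likewise one computes $N_\f$ directly from the covariant-derivative expression for the Nijenhuis torsion and checks it equals $2d\eta\otimes\xi$, where $d\eta$ is read off as the skew part of $2g(\nabla\xi,\cdot)$.

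The main obstacle I anticipate is bookkeeping in the forward direction: the four $d\eta$-terms in \eqref{eq:g(nablaf,.)} each hide a factor of $\f$ inside $d\eta(\f\,\cdot,\cdot)$, and correctly converting them via $d\eta(X,Y)=2g(\f AX,Y)$ while respecting the anticommutation $A\f=-\f A$ and the skew-symmetry of $A$ requires disciplined index tracking to land precisely on the asymmetric coefficient pattern $2\eta(X)AY+\eta(Y)AX+g(X,AY)\xi$ rather than a symmetrized variant. A useful consistency check is to contract \eqref{eq:aqS.char} with $\xi$ in each slot: setting $X=\xi$ must reproduce $\Lie_\xi\f=0$, and the $Z=\xi$ contraction must recover $d\eta(X,Y)=2g(AX,\f Y)$, confirming the identification of $A$ and the $\f$-anti-invariance of $d\eta$.
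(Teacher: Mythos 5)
Your overall architecture matches the paper's proof: both directions run through the master formula \eqref{eq:g(nablaf,.)} (forward) and the covariant-derivative expression \eqref{eq:N_f} for $N_\f$ together with the cyclic sum for $d\Phi$ (converse), and your final coefficient bookkeeping $2g((\nabla_X\f)Y,Z)=4\eta(X)g(AY,Z)+2\eta(Y)g(AX,Z)+2g(X,AY)\eta(Z)$ is exactly right. However, the preliminary step where you establish the algebraic properties of $A$ contains two genuine flaws. First, your forward direction hinges on $d\eta(X,Y)=2g(\nabla_X\xi,Y)$, i.e.\ on $\xi$ being Killing, and you attribute this to Proposition \ref{prop-deta-normal}. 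That proposition concerns anti-normal almost contact manifolds with no metric at all and contains no such statement; in the paper the Killing property is Proposition \ref{Prop:aqS-Killing}, proved \emph{after} and \emph{by means of} Theorem \ref{Thm:char.aqS}, so invoking it here is circular unless you supply an independent argument. Such an argument does exist --- from $d\Phi=0$, $\Lie_\xi\f=0$ and $d\eta(\xi,\cdot)=0$ one gets $\Lie_\xi g=0$ via the identity $d\Phi(\xi,X,Y)=(\Lie_\xi g)(X,\f Y)+g(X,(\Lie_\xi\f)Y)$, which is equation \eqref{eq:dPhi(xi,X,Y)} of the paper --- but you would have to prove it at this point; it is not available off the shelf.

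Second, your justification of the skew-symmetry of $A=-\f\circ\nabla\xi$ --- ``since $\nabla\xi$ is skew and $\f$ is skew, the composition is again skew'' --- is false as linear algebra: if $S,T$ are skew-symmetric with respect to $g$, then $(ST)^t=T^tS^t=TS$, so $ST$ is skew-symmetric precisely when $S$ and $T$ \emph{anticommute} (and symmetric when they commute). Hence skew-symmetry of $A$ requires first knowing $\nabla_{\f X}\xi=-\f\nabla_X\xi$, which in your scheme only comes afterwards, from the $\f$-anti-invariance of $d\eta$ combined with the Killing property. The argument can be reordered and repaired (Killing first, then the anticommutation of $\nabla\xi$ with $\f$ from $d\eta(\f X,Y)=d\eta(X,\f Y)$, then skew-symmetry of $A$ and $A\f=-\f A$), but note that the paper sidesteps both issues entirely: it specializes the master formula at $Y=\xi$ to obtain $2g(AX,Z)=d\eta(X,\f Z)$, and reads off the skew-symmetry of $A$ and of $A\f$ directly from the $\f$-anti-invariance of $d\eta$, never using that $\xi$ is Killing. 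Your converse direction is essentially the paper's, though you should first record $A\xi=0$ and $\eta\circ A=0$ (both follow from skew-symmetry and anticommutation) before setting $Y=\xi$ in \eqref{eq:aqS.char}, since those terms do not vanish for free.
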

\begin{proof}
Let us assume that $M$ is an aqS manifold. For every $X\in\X(M)$, let us define
$$AX:=(\nabla_X\f)\xi=-\f\nabla_X\xi.$$
Applying \eqref{eq:g(nablaf,.)}, \eqref{eq:aqS} and (iv) of Proposition \ref{prop-deta-normal}, for every $X,Y,Z\in\X(M)$ we get:
\begin{equation}\label{eq:g(nabla,Z)}
	2g((\nabla_X\f)Y,Z)=2d\eta(Y,\f Z)\eta(X)+d\eta(\f Y,X)\eta(Z)-d\eta(\f Z,X)\eta(Y).
\end{equation}
Replacing $Y$ by $\xi$ we obtain:
\begin{equation}\label{eq:g(AX,Z)}
	2g(AX,Z)=d\eta(X,\f Z),
\end{equation}
which implies that $A$ is skew-symmetric with respect to $g$ since $d\eta(X,\f Z)=d\eta(\f X,Z)$.
Using \eqref{eq:g(AX,Z)} in \eqref{eq:g(nabla,Z)} we have:
$$g((\nabla_X\f)Y,Z)=2g(AY,Z)\eta(X)-g(AX,Y)\eta(Z)+g(AX,Z)\eta(Y)$$
which gives \eqref{eq:aqS.char}.
It remains to show that $A$ and $\f$ anticommute each other. First we note that $A\f$ is skew-symmetric with respect to $g$. Indeed from \eqref{eq:g(AX,Z)} it follows that
$$2g(A\f X, Y)=d\eta(\f X,\f Y)=-d\eta(\f Y,\f X)=-2g(A\f Y,X).$$
Then, by the skew-symmetry of $A$, $\f$ and $A\f$, for every $X,Y\in\X(M)$ we get:
$$g(A\f X,Y)=-g(\f X,AY)=g(X,\f AY)=-g(\f AX,Y),$$
which implies that $A\f=-\f A$.

Conversely, assume that there exists a skew-symmetric $(1,1)$-tensor field $A$ which anticommutes with $\f$ and satisfies \eqref{eq:aqS.char}.
Firstly we point out that $A\xi=0$ and $\eta\circ A=0$. Indeed, by the skew-symmetry of $A$, $g(A\xi,\xi)=0$ and for every $X\in\X(M)$
$$g(A\xi,\f X)=-g(\xi,A\f X)=g(\xi,\f AX)=0.$$
Thus $A\xi=0$ and then $\eta(AX)=g(AX,\xi)=-g(X,A\xi)=0$.
Now, applying \eqref{eq:aqS.char} for $Y=\xi$ we have $(\nabla_X\f)\xi=-\f\nabla_X\xi=AX$, that is $\nabla_X\xi=\f AX$. \\
In order to show that the structure is anti-normal, it is convenient to express $N_\f$ as:
\begin{eqnarray}\label{eq:N_f}
	N_\f(X,Y)&=&(\nabla_{\f X}\f)Y-(\nabla_{\f Y}\f)X+(\nabla_X\f)\f Y-(\nabla_Y\f)\f X\nonumber\\
	&&{}+\eta(X)\nabla_Y\xi-\eta(Y)\nabla_X\xi
\end{eqnarray}
for every $X,Y\in\X(M)$. Therefore,
\begin{eqnarray*}
	N_\f(X,Y)&=&\eta(Y)A\f X+g(\f X,AY)\xi-\eta(X)A\f Y-g(\f Y,AX)\xi\\
	&&{}+2\eta(X)A\f Y+g(X,A\f Y)\xi-2\eta(Y)A\f X-g(Y,A\f X)\xi\\
	&&{}+\eta(X)\f AY-\eta(Y)\f AX\\
	&=&4g(\f X,AY)\xi.
\end{eqnarray*}
Moreover,
$$d\eta(X,Y)=g(\nabla_X\xi,Y)-g(X,\nabla_Y\xi)=g(\f AX,Y)-g(X,\f AY)=2g(\f X,AY),$$
where the last equality follows again from the fact that $A$ anticommutes with $\f$ and it is skew-symmetric. Thus we obtained that $N_\f(X,Y)=2d\eta(X,Y)\xi$.
Finally, $\Phi$ is closed. Indeed, for every $X,Y,Z\in\X(M)$
\begin{eqnarray*}
	d\Phi(X,Y,Z)&=&\underset{X,Y,Z}{\mathfrak{S}}(\nabla_X\Phi)(Y,Z)=-\underset{X,Y,Z}{\mathfrak{S}}g((\nabla_X\f)Y,Z)\\
	&=&{}-2\eta(X)g(AY,Z)-\eta(Y)g(AX,Z)-g(X,AY)\eta(Z)\\
	&&{}-2\eta(Y)g(AZ,X)-\eta(Z)g(AY,X)-g(Y,AZ)\eta(X)\\
	&&{}-2\eta(Z)g(AX,Y)-\eta(X)g(AZ,Y)-g(Z,AX)\eta(Y),
\end{eqnarray*}	
which vanishes by the skew-symmetry of $A$.
\end{proof}
\smallskip

\begin{proposition}\label{Prop:aqS-Killing}
	Let $(M,\f,\xi,\eta,g)$ be an anti-quasi-Sasakian manifold. Then the following properties hold:
	\begin{enumerate}
		\item[(i)] $\nabla_\xi\xi=0$;
		\item[(ii)] $\nabla_{\f X}\xi=-\f\nabla_X\xi$;
		\item[(iii)] $\xi$ is Killing.
	\end{enumerate}
\end{proposition}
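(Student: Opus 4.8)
The plan is to read off all three assertions directly from Theorem \ref{Thm:char.aqS}, which already supplies the two ingredients we need: the formula $\nabla_X\f)\xi=-\f\nabla_X\xi$, equivalently $\nabla_X\xi=\f AX$ (established inside the proof of that theorem), together with the structural properties of $A=-\f\circ\nabla\xi$, namely that $A$ is skew-symmetric, anticommutes with $\f$, and satisfies $A\xi=0$ and $\eta\circ A=0$. Once these are in hand, each claim reduces to a one-line algebraic manipulation, so I do not expect a genuine obstacle; the proposition is essentially an immediate corollary of the characterization, and the only content is assembling the three identities correctly.

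First I would prove (i) by specializing $\nabla_X\xi=\f AX$ to $X=\xi$: since $A\xi=0$, this gives $\nabla_\xi\xi=\f A\xi=0$ at once. For (ii) I would evaluate both sides through the same formula. Using $A\f=-\f A$,
\[
\nabla_{\f X}\xi=\f A\f X=-\f^2 AX,
\]
while on the other hand $-\f\nabla_X\xi=-\f(\f AX)=-\f^2 AX$; the two expressions coincide, which is exactly (ii). (If desired one may simplify further via $\f^2=-I+\eta\otimes\xi$ and $\eta\circ A=0$ to get $-\f^2AX=AX$, but this is not needed for the statement.)

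Finally, for (iii) I would use that $\xi$ is Killing if and only if the endomorphism $X\mapsto\nabla_X\xi$ is skew-symmetric with respect to $g$. Writing $\nabla_X\xi=\f AX$ and applying in turn the skew-symmetry of $\f$, the skew-symmetry of $A$, and the anticommutation $A\f=-\f A$, I would compute
\[
g(\f AX,Y)=-g(AX,\f Y)=g(X,A\f Y)=-g(X,\f AY),
\]
so that $g(\nabla_X\xi,Y)+g(X,\nabla_Y\xi)=0$ for all $X,Y\in\X(M)$, i.e. $\xi$ is Killing.

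The only point meriting a word of care is the use of $\nabla_X\xi=\f AX$ with $\eta(\nabla_X\xi)=0$; but the former is recorded in the proof of Theorem \ref{Thm:char.aqS}, and the latter follows in any case from $g(\nabla_X\xi,\xi)=\tfrac12 X\big(g(\xi,\xi)\big)=0$ since $\xi$ is a unit vector field. Thus the hardest step is merely organizational, and the proposition follows by combining the displayed computations above.
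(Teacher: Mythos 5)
Your proof is correct and takes essentially the same route as the paper: both arguments read off $\nabla\xi=\f A$ from Theorem \ref{Thm:char.aqS} and then deduce (i) and (ii) from $A\xi=0$ and $A\f=-\f A$, and (iii) from the skew-symmetry of $\f A=\nabla\xi$, which you verify by exactly the computation the paper compresses into one line ("since $A$ and $\f$ anticommute and are skew-symmetric, so is $\f A$"). Your extra remark that $\eta(\nabla_X\xi)=0$ (needed to pass from $A=-\f\circ\nabla\xi$ to $\nabla\xi=\f A$) is a legitimate point that the paper leaves implicit.
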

\begin{proof}
	From $A=-\f\circ \nabla\xi$ we have that $\nabla\xi=\f A=-A\f$, which immediately gives both (i) and (ii). Concerning (iii), since $A$ and $\f$ anticommute and they are skew-symmetric with respect to $g$, so is $\f A=\nabla\xi$.
\end{proof}

\section{AqS manifolds as transversely K\"ahler manifolds}
In this section, first we place the class of anti-quasi-Sasakian manifolds in the framework of the Chinea-Gonzalez classification of almost contact metric manifolds, enlightening the relationship with quasi-Sasakian manifolds. Then we will focus on the transverse geometry of aqS manifolds with respect to the 1-dimensional foliation generated by $\xi$.

\subsection{Chinea-Gonzalez classification}
We breafly recall the Chinea-Gonzalez classifying criterion \cite{Chinea-Gonzalez}.
Given a $(2n+1)$-dimensional real vector space $V$ endowed with an almost contact metric structure $(\f,\xi,\eta,\left<,\right>)$, let $\calC(V)$ be the finite dimensional vector space consisting of all tensors of type $(0,3)$ having the same symmetries of the covariant derivative $\nabla\Phi$ in the case of manifolds, that is
\[\alpha(X,Y,Z)=-\alpha(X,Z,Y)=-\alpha(X,\f Y,\f Z)+\eta(Y)\alpha(X,\xi,Z)+\eta(Z)\alpha(X,Y,\xi)\]
for every $X,Y,Z\in V$. The space $\calC(V)$ decomposes into twelve orthogonal irreducible factors $\calC_i$ ($i=1,\dots,12$) under the action of the group $U(n)\times 1$, providing $2^{12}$ invariant subspaces. In particular the null subspace $\{0\}$ corresponds to the class of cok\"ahler manifolds ($\nabla\Phi=0$).
Next, we will be interested in the following four classes:

\begin{table}[h]
	\centering
	\renewcommand{\arraystretch}{1.3}
	\begin{tabular}{c|l}
		\textbf{Class}&\textbf{Defining condition}\\
		\hline
		$\calC_6$&$\alpha(X,Y,Z)=\frac{1}{2n}[\left<X,Y\right>\eta(Z)-\left<X,Z\right>\eta(Y)](c_{12}\alpha)\xi$\\
		\hline
		$\calC_7$&$\alpha(X,Y,Z)=\eta(Z)\alpha(Y,X,\xi)-\eta(Y)\alpha(\f X,\f Z,\xi)$, $(c_{12}\alpha)\xi=0$\\
		\hline
		$\calC_{10}$&$\alpha(X,Y,Z)=-\eta(Z)\alpha(Y,X,\xi)+\eta(Y)\alpha(\f X,\f Z,\xi)$\\
		\hline
		$\calC_{11}$&$\alpha(X,Y,Z)=-\eta(X)\alpha(\xi,\f Y,\f Z)$\\
		\hline
	\end{tabular}
\caption{\label{Tab.1}}
\end{table}

\noindent where $(c_{12}\alpha)\xi=\sum_i\alpha(e_i,e_i,\xi)$, for any orthonormal basis $\{e_i\}$ of $V$.
Straightforward computations show that
\smallskip

\begin{table}[h!]
	\centering
	\renewcommand{\arraystretch}{1.3}
	\begin{tabular}{c|l}
		\textbf{Class}&\textbf{Defining condition}\\
		\hline
		$\calC_6\oplus \calC_7$& $\alpha(X,Y,Z)=\eta(Z)\alpha(Y,X,\xi)-\eta(Y)\alpha(\f X,\f Z,\xi)$\\
		\hline
		$\calC_{10}\oplus \calC_{11}$& $\alpha(X,Y,Z)=-\eta(Z)\alpha(Y,X,\xi)+\eta(Y)\alpha(\f X,\f Z,\xi)-\eta(X)\alpha(\xi,\f Y,\f Z)$\\
		\hline
		$\calC_6\oplus \calC_7\oplus \calC_{10}\oplus \calC_{11}$&	$\alpha(X,Y,Z)=\eta(Z)\alpha(X,Y,\xi)-\eta(Y)\alpha(\f Z,\f X,\xi)-\eta(X)\alpha(\xi,\f Y,\f Z)$\\
		\hline
	\end{tabular}
	\caption{\label{Tab.2}}
\end{table}

Recall that an almost contact metric structure $(\f,\xi,\eta,g)$ on a smooth manifold $M$ is of class $\calC_6\oplus \calC_7$ if and only if it is quasi-Sasakian. In the following we will provide a characterization of the class $\calC_6\oplus \calC_7\oplus \calC_{10}\oplus \calC_{11}$ which, as we will see later, also includes anti-quasi-Sasakian structures.
From Table \ref{Tab.2} the defining condition of $\calC_6\oplus \calC_7\oplus \calC_{10}\oplus \calC_{11}$ in terms of $\nabla\Phi$ is
\begin{equation*}
	(\nabla_X\Phi)(Y,Z)=\eta(Z)(\nabla_X\Phi)(Y,\xi)-\eta(Y)(\nabla_{\f Z}\Phi)(\f X,\xi)-\eta(X)(\nabla_\xi\Phi)(\f Y,\f Z),
\end{equation*}
which can be written also as
\begin{equation}\label{eq:C6+...+C11}
	(\nabla_X\Phi)(Y,Z)=\eta(Z)(\nabla_X\eta)\f Y+\eta(Y)(\nabla_{\f Z}\eta)X-\eta(X)(\nabla_\xi\Phi)(\f Y,\f Z),
\end{equation}
since $(\nabla_X\Phi)(Y,\xi)=-g((\nabla_X\f)Y,\xi)=-\eta(\nabla_X\f Y)=(\nabla_X\eta)\f Y$ and $(\nabla_X\eta)\xi=0$.\\

The following notion was introduced in \cite{Puhle:GqS} for 5-dimensional almost contact metric manifolds. We extend it for manifolds of any dimension:

\begin{definition}
	An almost contact metric manifold $(M,\f,\xi,\eta,g)$ is called \textit{generalized-quasi-Sasakian} provided $\xi$ is a Killing vector field and $$d\Phi(X,Y,Z)=N_\f(X,Y,Z)=0\quad \forall X,Y,Z\in\Gamma(\D),$$
	where $N_\f(X,Y,Z):=g(N_\f(X,Y),Z)$.
\end{definition}
\smallskip

\begin{proposition}\label{Prop:C6+...+C11}
	An almost contact metric structure $(\f,\xi,\eta,g)$ on a smooth manifold $M$ is of class $\calC_6\oplus\calC_7\oplus\calC_{10}\oplus \calC_{11}$ if and only if it is generalized-quasi-Sasakian.
\end{proposition}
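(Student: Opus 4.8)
The plan is to prove the two implications separately, and in both directions the decisive object is the \emph{totally horizontal} part of $\nabla\Phi$, namely $(\nabla_X\Phi)(Y,Z)$ with $X,Y,Z\in\Gamma(\D)$. The bridge between the two descriptions is the Levi-Civita formula \eqref{eq:g(nablaf,.)}: restricting it to horizontal arguments makes every $d\eta$-term vanish, since each such term carries an explicit factor $\eta(X)$, $\eta(Y)$ or $\eta(Z)$. Thus for $X,Y,Z\in\Gamma(\D)$ one gets $2g((\nabla_X\f)Y,Z)=d\Phi(X,\f Y,\f Z)-d\Phi(X,Y,Z)+g(N_\f(Y,Z),\f X)$, an identity entirely among horizontal vectors (recall $\f\D=\D$). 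Combined with $(\nabla_X\Phi)(Y,Z)=-g((\nabla_X\f)Y,Z)$, this shows that the horizontal part of $\nabla\Phi$ vanishes if and only if $d\Phi$ and $N_\f$ both vanish on $\D$.

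For the forward implication, I would assume \eqref{eq:C6+...+C11}. Evaluating it on $X,Y,Z\in\Gamma(\D)$ kills the right-hand side, so the horizontal part of $\nabla\Phi$ is zero; by the remark above this already yields $d\Phi(X,Y,Z)=0$ on $\D$, and feeding it into \eqref{eq:N_f} (whose last two terms drop because $\eta(X)=\eta(Y)=0$) gives $g(N_\f(X,Y),Z)=0$ on $\D$. It remains to prove that $\xi$ is Killing. Setting $Y=\xi$ in \eqref{eq:C6+...+C11} and using $\f\xi=0$, the right-hand side collapses to $(\nabla_{\f Z}\eta)X$, while the left-hand side is $-(\nabla_X\eta)\f Z$; hence $(\nabla_{\f Z}\eta)X+(\nabla_X\eta)\f Z=0$. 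Since $\f$ is onto $\D$, this is exactly the skew-symmetry of $(\nabla_{\cdot}\eta)\cdot$ when one slot lies in $\D$. Specializing $X=\xi$ forces $(\nabla_\xi\eta)V=0$ for all $V\in\D$, whence $\nabla_\xi\xi=0$; together these give $(\nabla_A\eta)B+(\nabla_B\eta)A=0$ for all $A,B$, i.e. $\xi$ Killing.

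For the converse, assume $M$ is generalized-quasi-Sasakian. The horizontal identity above (now read from right to left) shows that the totally horizontal part of $\nabla\Phi$ vanishes. Both sides of \eqref{eq:C6+...+C11} are $(0,3)$-tensors, so it suffices to verify the identity when each of $X,Y,Z$ is either in $\D$ or equal to $\xi$, i.e. on the $2^3$ cases of the splitting $TM=\D\oplus\langle\xi\rangle$. The all-horizontal case is the vanishing just proved; the cases with a single $\xi$ reduce, via $(\nabla_X\Phi)(Y,\xi)=(\nabla_X\eta)\f Y$ and the skewness of $\nabla\eta$ (from $\xi$ Killing), to identities already established; and the cases with two or three $\xi$'s collapse because $\f\xi=0$, $\nabla_\xi\eta=0$ and the skew-symmetry of $\Phi$.

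The main subtlety I anticipate is the single case $X=\xi$, $Y,Z\in\Gamma(\D)$, where \eqref{eq:C6+...+C11} demands $(\nabla_\xi\Phi)(Y,Z)=-(\nabla_\xi\Phi)(\f Y,\f Z)$. This $\f$-anti-invariance is not obvious from the generalized-quasi-Sasakian hypotheses, but it is automatic: $\nabla\Phi$ always lies in $\calC(V)$, and the defining symmetry of $\calC(V)$ gives exactly $\alpha(\xi,Y,Z)=-\alpha(\xi,\f Y,\f Z)$ for horizontal $Y,Z$. Recognizing that this one case is supplied for free by the ambient symmetries of $\nabla\Phi$, rather than by the hypotheses, is the point that makes the case analysis close; everything else is bookkeeping on the splitting $TM=\D\oplus\langle\xi\rangle$.
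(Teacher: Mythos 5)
Your proof is correct, and while your forward implication follows the paper's own argument essentially step for step (evaluate \eqref{eq:C6+...+C11} on horizontal vectors to kill the right-hand side, convert $N_\f$ into $\nabla\Phi$-terms via \eqref{eq:N_f}, then slot $\xi$ into the identity and use skew-symmetry of $\nabla_X\Phi$ to obtain that $\xi$ is Killing), your converse takes a genuinely different and leaner route. The paper proves the converse by direct computation: it first establishes the auxiliary identity $N_\f(\xi,X,\f Y)=d\Phi(\xi,X,Y)$ (equation \eqref{eq:N_dPhi}), then computes $2(\nabla_\xi\Phi)(\f Y,\f Z)$ and finally $2(\nabla_X\Phi)(Y,Z)$ from the Levi-Civita formula \eqref{eq:g(nablaf,.)}, massaging the result into the form \eqref{eq:C6+...+C11}. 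You instead note that both sides of \eqref{eq:C6+...+C11} are $(0,3)$-tensors, reduce to the eight cases of the splitting $TM=\D\oplus\langle\xi\rangle$, and settle each case with minimal input: the all-horizontal case from the horizontal restriction of \eqref{eq:g(nablaf,.)}, the single-$\xi$ cases from $(\nabla_X\Phi)(Y,\xi)=(\nabla_X\eta)\f Y$ and the skewness of $\nabla\eta$, and --- the key observation --- the case $X=\xi$, $Y,Z\in\Gamma(\D)$ from the universal symmetry $\alpha(X,Y,Z)=-\alpha(X,\f Y,\f Z)+\eta(Y)\alpha(X,\xi,Z)+\eta(Z)\alpha(X,Y,\xi)$ defining $\calC(V)$, which $\nabla\Phi$ satisfies pointwise for \emph{any} almost contact metric structure (it follows from $\f^2=-I+\eta\otimes\xi$, the compatibility of $g$, and $\nabla g=0$). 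This appeal is legitimate precisely because the Chinea-Gonzalez framework presupposes that $\nabla\Phi$ lies in $\calC(T_xM)$; you are right that this is the one case the generalized-quasi-Sasakian hypotheses do not reach directly, and without that remark it would be a genuine gap. What your argument buys is brevity and transparency: no long computation, and the role of each hypothesis is isolated case by case. What the paper's computation buys is the by-product identity \eqref{eq:N_dPhi}, which it explicitly reuses later (in the proof that a $\calC_{10}\oplus\calC_{11}$ structure with $\Lie_\xi\f=0$ is anti-quasi-Sasakian, and in the transversely K\"ahler characterization), so that identity would still need to be proved somewhere even if your shorter proof of this proposition were adopted.
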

\begin{proof}
	Assume that $(M,\f,\xi,\eta,g)$ belongs to  $\calC_6\oplus\calC_7\oplus\calC_{10}\oplus\calC_{11}$. From \eqref{eq:C6+...+C11} it follows that $(\nabla_X\Phi)(Y,Z)=0$ for every $X,Y,Z\in\Gamma(\D)$, and thus	$$d\Phi(X,Y,Z)=\mathfrak{S}_{X,Y,Z}(\nabla_X\Phi)(Y,Z)=0.$$
	By an equivalent formulation of \eqref{eq:N_f}, we also have
	\begin{eqnarray*}
		N_\f(X,Y,Z)&=&(\nabla_Y\Phi)(X,\f Z)+(\nabla_{\f Y}\Phi)(X,Z)-(\nabla_X\Phi)(Y,\f Z)\\
		&&{}-(\nabla_{\f X}\Phi)(Y,Z)+\eta(Z)d\eta(X,Y)=0.
	\end{eqnarray*}
	Applying \eqref{eq:C6+...+C11} for $X=Z=\xi$ we get that $(\nabla_\xi\eta)\f Y=0$, and hence $\nabla_\xi\eta=0$.
	Moreover,
		$$(\nabla_X\Phi)(\xi,Y)=(\nabla_{\f Y}\eta)X,\qquad (\nabla_X\Phi)(Y,\xi)=(\nabla_X\eta)\f Y,$$
	and thus, by the skew-symmetry of $\nabla_X\Phi$, one has that $(\nabla_{\f Y}\eta)X+(\nabla_X\eta)\f Y=0$. On the other hand $(\nabla_X\eta)\xi=0=(\nabla_\xi\eta)X$ and then, for every $X,Y\in\X(M)$ we get $(\nabla_X\eta)Y+(\nabla_Y\eta)X=0$,
which proves that $\xi$ is a Killing vector field.

Conversely, assume that $(M,\f,\xi,\eta,g)$ is a generalized-quasi-Sasakian manifold. Since $\xi$ is Killing, for every $X,Y\in\X(M)$
\begin{equation}\label{eq:deta=2nabla_eta}
	d\eta(X,Y)=(\nabla_X\eta)Y-(\nabla_Y\eta)X=2(\nabla_X\eta)Y=-2(\nabla_Y\eta)X.
\end{equation}
In particular, $d\eta(\xi,X)=-2(\nabla_X\eta)\xi=0$. Now we show that
\begin{equation}\label{eq:N_dPhi}
	N_\f(\xi,X,\f Y)=d\Phi(\xi,X,Y)
\end{equation}
for every $X,Y\in\X(M)$. Indeed:
\begin{eqnarray*}
	N_\f(\xi,X,\f Y)&=&g(N_\f(\xi,X),\f Y)\\
	&=&g(\f^2[\xi,X]-\f[\xi,\f X]+d\eta(\xi,X)\xi,\f Y)\\
	&=&g(\f[\xi,X],Y)-g([\xi,\f X],Y)+\eta[\xi,\f X]\eta(Y)\\
	&=&g(\f\nabla_\xi X,Y)-g(\f\nabla_X\xi,Y)-g(\nabla_\xi\f X,Y)+g(\nabla_{\f X}\xi,Y)\\
	&=&-g((\nabla_\xi\f)X,Y)+g((\nabla_X\f)\xi,Y)-g(\f X,\nabla_Y\xi)\\
	&=&(\nabla_\xi\Phi)(X,Y)+(\nabla_X\Phi)(Y,\xi)-g(X,(\nabla_Y\f)\xi)\\
	&=&\mathfrak{S}_{\xi,X,Y}(\nabla_\xi\Phi)(X,Y)=d\Phi(\xi,X,Y).
\end{eqnarray*}
Now, by equation \eqref{eq:g(nablaf,.)}, we have
\begin{eqnarray*}
	2(\nabla_\xi\Phi)(\f Y,\f Z)&=&-g((\nabla_\xi\f)\f Y,\f Z)\\
	&=&d\Phi(\xi,\f Y,\f Z)-d\Phi(\xi,\f^2Y,\f^2 Z)+d\eta(Y,\f Z)-d\eta(Z,\f Y).
\end{eqnarray*}
Applying again \eqref{eq:g(nablaf,.)}, and using the fact that $d\Phi(X,Y,Z)=N_\f(X,Y,Z)=0$ along $\D$, together with equations \eqref{eq:N_dPhi} and \eqref{eq:deta=2nabla_eta}, for every $X,Y,Z\in\X(M)$ we get:
\begin{eqnarray*}
	2(\nabla_X\Phi)(Y,Z)
	&=&d\Phi(X,\f^2 Y,\f^2 Z)-\eta(Y)d\Phi(X,\xi,\f^2Z)- \eta(Z)d\Phi(X,\f^2Y,\xi)-d\Phi(X,\f Y,\f Z)\\
	&&{}+\eta(Y)N_\f(\xi,\f^2Z,\f X)+\eta(Z)N_\f(\f^2Y,\xi,\f X)\\
	&&{}-\eta(X)d\eta(\f Y,Z)+\eta(X)d\eta(\f Z,Y)-\eta(Z)d\eta(\f Y,X)+\eta(Y)d\eta(\f Z,X)\\
	&=&\eta(X)[d\Phi(\xi,\f^2Y,\f^2Z)-d\Phi(\xi,\f Y,\f Z)
	-d\eta(\f Y, Z)+d\eta(\f Z,Y)]\\
	&&{}-\eta(Z)d\eta(\f Y,X)+\eta(Y)d\eta(\f Z,X)\\
	&=&{}-2\eta(X)(\nabla_\xi\Phi)(\f Y,\f Z)+2\eta(Z)(\nabla_X\eta)\f Y-2\eta(Y)(\nabla_{\f Z}\eta)X.
\end{eqnarray*}
This proves \eqref{eq:C6+...+C11}, so that the almost contact metric manifold is of class $\calC_6\oplus\calC_7\oplus\calC_{10}\oplus\calC_{11}$.
\end{proof}

\begin{remark}\label{Rmk:N(xi,.,xi)=0}
	Observe that in a generalized-quasi-Sasakian manifold the fact that $\xi$ is Killing gives $d\eta(\xi,\cdot)=0$, or equivalently $N_\f(\xi,\cdot,\xi)=0$.
\end{remark}
\smallskip

The following two results clarify the place of anti-quasi-Sasakian manifolds in the Chinea-Gonzalez classification.

\begin{corollary}\label{Cor:C10+C11}
	A generalized-quasi-Sasakian structure $(\f,\xi,\eta,g)$ on $M$ belongs to $\calC_{10}\oplus\calC_{11}$ if and only if  $$N_\f(X,Y,\xi)=2d\eta(X,Y)\quad \forall X,Y\in\X(M).$$
	In particular every anti-quasi-Sasakian structure is of class $\calC_{10}\oplus\calC_{11}$.
\end{corollary}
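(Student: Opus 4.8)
The plan is to translate the pointwise identity into a statement about the $\f$-anti-invariance of $d\eta$, and then read it off by comparing the two Chinea--Gonzalez defining conditions. First I would record that, since $N_\f(X,Y,\xi)=g(N_\f(X,Y),\xi)=\eta(N_\f(X,Y))$, equation \eqref{eq:eta(N_f)} gives $N_\f(X,Y,\xi)=d\eta(X,Y)-d\eta(\f X,\f Y)$. Hence the condition $N_\f(X,Y,\xi)=2d\eta(X,Y)$ is equivalent to $d\eta(\f X,\f Y)=-d\eta(X,Y)$, i.e. to the $\f$-anti-invariance of $d\eta$; by Remark \ref{Rmk:aqS_bis}, and using $d\eta(\xi,\cdot)=0$ which holds here by Remark \ref{Rmk:N(xi,.,xi)=0}, this may equivalently be written $d\eta(\f X,Y)=d\eta(X,\f Y)$. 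So the whole statement reduces to showing that a generalized-quasi-Sasakian structure lies in $\calC_{10}\oplus\calC_{11}$ precisely when $d\eta$ is $\f$-anti-invariant.

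For this I would start from Proposition \ref{Prop:C6+...+C11}, by which a generalized-quasi-Sasakian structure satisfies the defining relation \eqref{eq:C6+...+C11} of $\calC_6\oplus\calC_7\oplus\calC_{10}\oplus\calC_{11}$, and compare it with the membership condition of $\calC_{10}\oplus\calC_{11}$ from Table \ref{Tab.2}, written for $\alpha=\nabla\Phi$. The key preliminary step is to normalize the two ``boundary'' terms of that condition: using $(\nabla_X\Phi)(Y,\xi)=(\nabla_X\eta)\f Y$, together with $\f^2=-I+\eta\otimes\xi$ and $(\nabla_W\eta)\xi=0$, one gets $\alpha(Y,X,\xi)=(\nabla_Y\eta)\f X$ and $\alpha(\f X,\f Z,\xi)=-(\nabla_{\f X}\eta)Z$, whereas the term $\alpha(\xi,\f Y,\f Z)=(\nabla_\xi\Phi)(\f Y,\f Z)$ is exactly the one appearing in \eqref{eq:C6+...+C11}. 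Subtracting the two relations therefore cancels the $\nabla_\xi\Phi$ contribution and leaves a condition involving only $\nabla\eta$, which via \eqref{eq:deta=2nabla_eta} is expressed entirely through $d\eta$.

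Carrying out this subtraction, I expect membership in $\calC_{10}\oplus\calC_{11}$ to be equivalent to $\eta(Z)S(X,Y)=\eta(Y)S(X,Z)$ for all $X,Y,Z$, where $S(X,Y):=d\eta(X,\f Y)-d\eta(\f X,Y)$ is precisely the obstruction to $\f$-anti-invariance. To close the argument I would specialize: taking $Z=\xi$ and $Y\in\Gamma(\D)$ forces $S(X,Y)=0$ on $\D$, while $S(X,\xi)=-d\eta(\f X,\xi)=0$ follows from $d\eta(\xi,\cdot)=0$; hence the relation holds if and only if $S\equiv0$, that is, if and only if $d\eta$ is $\f$-anti-invariant, which by the first paragraph is $N_\f(X,Y,\xi)=2d\eta(X,Y)$. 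The main obstacle I anticipate is the bookkeeping in matching \eqref{eq:C6+...+C11} against the $\calC_{10}\oplus\calC_{11}$ condition and isolating $S$ cleanly; the algebra is elementary once the boundary terms have been normalized as above.

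Finally, for the ``in particular'' assertion I would verify that every aqS manifold is generalized-quasi-Sasakian: $d\Phi=0$ and $N_\f=2d\eta\otimes\xi$ give $N_\f(X,Y,Z)=2d\eta(X,Y)\eta(Z)=0$ for $X,Y,Z\in\Gamma(\D)$, while $\xi$ is Killing by Proposition \ref{Prop:aqS-Killing}. Then from Definition \ref{Def:aqS} one has $N_\f(X,Y,\xi)=g(2d\eta(X,Y)\xi,\xi)=2d\eta(X,Y)$ directly, so the characterization just proved places every anti-quasi-Sasakian structure in $\calC_{10}\oplus\calC_{11}$.
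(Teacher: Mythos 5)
Your proof is correct and follows essentially the same route as the paper: both compare the Table \ref{Tab.2} defining condition of $\calC_{10}\oplus\calC_{11}$ (with the boundary terms normalized via $(\nabla_X\Phi)(Y,\xi)=(\nabla_X\eta)\f Y$ and $\f^2=-I+\eta\otimes\xi$) against \eqref{eq:C6+...+C11}, use the Killing property \eqref{eq:deta=2nabla_eta} to pass from $\nabla\eta$ to $d\eta$, and identify the resulting condition with the $\f$-anti-invariance of $d\eta$, hence with $N_\f(X,Y,\xi)=2d\eta(X,Y)$ via \eqref{eq:eta(N_f)}. The only difference is that you make explicit two steps the paper leaves implicit, namely the specialization $Z=\xi$ showing that the obstruction $S$ vanishes identically, and the check that an anti-quasi-Sasakian structure is generalized-quasi-Sasakian so that the characterization applies; both are correct.
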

\begin{proof}
	From Table \ref{Tab.2}, an almost contact metric structure $(\f,\xi,\eta,g)$ on $M$ belongs to the class $\calC_{10}\oplus\calC_{11}$ if and only if $\nabla\Phi$ satisfies the following condition:
	$$(\nabla_X\Phi)(Y,Z)=-\eta(Z)(\nabla_Y\Phi)(X,\xi)+\eta(Y)(\nabla_{\f X}\Phi)(\f Z,\xi)-\eta(X)(\nabla_\xi\Phi)(\f Y,\f Z)$$
	or equivalently,
	\begin{equation*}
		(\nabla_X\Phi)(Y,Z)=-\eta(Z)(\nabla_Y\eta)\f X-\eta(Y)(\nabla_{\f X}\eta)Z-\eta(X)(\nabla_\xi\Phi)(\f Y,\f Z).
	\end{equation*}
Comparing this with \eqref{eq:C6+...+C11}, we have that a generalized-quasi-Sasakian manifold is of class $\calC_{10}\oplus\calC_{11}$ if and only if $(\nabla_X\eta)\f Y=-(\nabla_Y\eta)\f X$ for every $X,Y\in\X(M)$. Being $\xi$ Killing, this is equivalent to $d\eta(X,\f Y)=-d\eta(Y,\f X)$, namely to the $\f$-anti-invariance of $d\eta$ or $N_\f(X,Y,\xi)=2d\eta(X,Y)$.
\end{proof}
\smallskip

\begin{proposition}
	Let $(M,\f,\xi,\eta,g)$ be an almost contact metric manifold of class $\calC_{10}\oplus\calC_{11}$. Then it is anti-quasi-Sasakian if and only if $\Lie_\xi\f=0$.
\end{proposition}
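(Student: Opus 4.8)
The plan is to split into the two implications, and to notice first that the hypothesis already delivers most of the structure. Since $\calC_{10}\oplus\calC_{11}$ is contained in $\calC_6\oplus\calC_7\oplus\calC_{10}\oplus\calC_{11}$, Proposition \ref{Prop:C6+...+C11} tells us the manifold is automatically generalized-quasi-Sasakian: $\xi$ is Killing, and $d\Phi(X,Y,Z)=N_\f(X,Y,Z)=0$ for all $X,Y,Z\in\Gamma(\D)$. Moreover Corollary \ref{Cor:C10+C11} shows that membership in $\calC_{10}\oplus\calC_{11}$ is equivalent (among generalized-quasi-Sasakian structures) to $N_\f(X,Y,\xi)=2d\eta(X,Y)$, i.e. to the $\f$-anti-invariance of $d\eta$. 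Thus the only data still missing to verify \eqref{eq:aqS} are the components of $N_\f$ and of $d\Phi$ that involve the Reeb direction, and the role of $\Lie_\xi\f=0$ will be precisely to control these.

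For the direct implication there is nothing to compute: an anti-quasi-Sasakian manifold is anti-normal, and $\Lie_\xi\f=0$ is exactly item (iii) of Proposition \ref{prop-deta-normal}. For the converse I would start by feeding $\Lie_\xi\f=0$ into Remark \ref{rmk:N(xi,.)-Lie_f}, which yields simultaneously $N_\f(\xi,\cdot)=0$ and $d\eta(\xi,\cdot)=0$. The anti-normality $N_\f=2d\eta\otimes\xi$ then follows componentwise with respect to the splitting $TM=\D\oplus\langle\xi\rangle$: whenever one argument is $\xi$ both sides vanish, while for $X,Y\in\Gamma(\D)$ the $\D$-component of $N_\f(X,Y)$ is zero by the generalized-quasi-Sasakian condition and its $\xi$-component equals $N_\f(X,Y,\xi)\xi=2d\eta(X,Y)\xi$ by Corollary \ref{Cor:C10+C11}. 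Bilinearity extends $N_\f(X,Y)=2d\eta(X,Y)\xi$ to arbitrary fields, using $d\eta(\xi,\cdot)=0$ to discard the Reeb components on the right-hand side.

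It remains to promote $d\Phi(X,Y,Z)=0$, which is known only along $\D$, to the identity $d\Phi=0$, i.e. to annihilate the components of $d\Phi$ containing $\xi$. Here the key is to invoke the auxiliary identity \eqref{eq:N_dPhi}, namely $N_\f(\xi,X,\f Y)=d\Phi(\xi,X,Y)$, which holds on any generalized-quasi-Sasakian manifold. Since $N_\f(\xi,\cdot)=0$, its left-hand side is identically zero, so $d\Phi(\xi,X,Y)=0$ for all $X,Y$; combined with $d\Phi|_{\D}=0$ and the skew-symmetry of the $3$-form $d\Phi$, this gives $d\Phi=0$ everywhere. With $d\Phi=0$ and $N_\f=2d\eta\otimes\xi$ in hand, the structure is anti-quasi-Sasakian by Definition \ref{Def:aqS}.

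The only genuinely non-formal step is the vanishing of $d\Phi$ in the vertical directions: the class hypothesis controls $d\Phi$ merely on the horizontal distribution, and one must import the formula \eqref{eq:N_dPhi} (itself available because we are already generalized-quasi-Sasakian) to convert the vanishing of $N_\f(\xi,\cdot)$ into the vanishing of $d\Phi(\xi,\cdot,\cdot)$. Everything else reduces to bookkeeping over the decomposition $TM=\D\oplus\langle\xi\rangle$.
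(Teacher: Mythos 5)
Your proposal is correct and follows essentially the same route as the paper: both reduce the problem to the components of $N_\f$ and $d\Phi$ involving $\xi$ (the horizontal parts being granted by the class conditions via Proposition \ref{Prop:C6+...+C11} and Corollary \ref{Cor:C10+C11}), and both hinge on the identity \eqref{eq:N_dPhi} to convert the vanishing of $N_\f(\xi,\cdot)$ into $d\Phi(\xi,\cdot,\cdot)=0$, with Remark \ref{rmk:N(xi,.)-Lie_f} identifying $N_\f(\xi,\cdot)=0$ with $\Lie_\xi\f=0$. The only difference is organizational: you split the statement into two implications, while the paper runs a single chain of equivalences.
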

\begin{proof}
	Since the structure is of class $\calC_{10}\oplus\calC_{11}$, then for every $X,Y,Z\in\Gamma(\D)$
 	$$d\Phi(X,Y,Z)=0,\quad N_\f(X,Y)=2d\eta(X,Y)\xi, \quad N_\f(\xi,X,\xi)=0.$$
 	Hence, taking into account \eqref{eq:deta-normal}, $(\f,\xi,\eta,g)$ is anti-quasi-Sasakian if and only if
	$$d\Phi(\xi,X,Y)=N_\f(\xi,X,\f Y)=0$$ for every $X,Y\in\X(M)$. The first identity is equation \eqref{eq:N_dPhi} already showed in the proof of Proposition \ref{Prop:C6+...+C11} for every generalized-quasi-Sasakian manifold. Owing to $N_\f(\xi,\cdot,\xi)=0$, the vanishing of $N_\f(\xi,X,\f Y)$, is equivalent to $N_\f(\xi,\cdot)=0$, namely $\Lie_\xi\f=0$ by Remark \ref{rmk:N(xi,.)-Lie_f}.
\end{proof}
\medskip
\begin{remark}
	The only anti-quasi-Sasakian manifolds of pure type $\calC_{10}$ or $\calC_{11}$ are cok\"ahler.
	Indeed, from the defining conditions of $\calC_{10}$, $\calC_{11}$ and $\calC_{10}\oplus\calC_{11}$, one has that $M$ is of class $\calC_{10}$ if and only if $$0=-\eta(X)(\nabla_\xi\Phi)(\f Y,\f Z)=-2\eta(X)g(AY,Z)\quad \forall X,Y,Z\in\X(M)$$
	which is equivalent to $A=0$. Similarly, $M$ belongs to $\calC_{11}$ if and only if $$0=-\eta(Z)(\nabla_Y\eta)\f X-\eta(Y)(\nabla_{\f X}\eta)Z=\eta(Z)g(AX,Y)-\eta(Y)g(AX,Z)$$
	for every $X,Y,Z\in\X(M)$. This is equivalent to $g(AX,Y)\xi-\eta(Y)AX=0$ for any $X,Y\in\X(M)$. Being $AX$ orthogonal to $\xi$, it implies $A=0$.
\end{remark}

\subsection{The transverse geometry of aqS manifolds}
Before treating the metric case, we focus on the transverse geometry with respect to the Reeb vector field, determined by the anti-normal condition on an almost contact manifold.

\begin{proposition}
	Every anti-normal almost contact manifold $(M,\f,\xi,\eta)$ locally fibers onto a complex manifold endowed with a closed $2$-form $\omega$ of type $(2,0)$.
	In particular, if $\eta$ is a contact form, then $M$ locally fibers onto a complex symplectic manifold.
\end{proposition}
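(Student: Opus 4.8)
The plan is to realize the base of the fibration as the local leaf space of the one-dimensional foliation generated by $\xi$, and then to transport the transverse part of the almost contact structure down to it. First I would invoke the flow-box theorem for the nowhere-vanishing field $\xi$: around any point there are coordinates in which $\xi=\partial/\partial t$, so that the foliation is locally simple and the leaf space is a smooth $2n$-manifold $B$ with submersion $\pi\colon M\to B$. By Proposition \ref{prop-deta-normal} one has $\Lie_\xi\f=0$ and $\Lie_\xi\eta=0$; the latter keeps $\D=\ker\eta$ invariant along the flow, so $d\pi$ restricts to a fibrewise isomorphism $\D_x\to T_{\pi(x)}B$, while $\Lie_\xi\f=0$ guarantees that $\f|_\D$ is constant along the leaves and sends projectable sections to projectable sections (since $[\xi,\f X]=\f[\xi,X]$). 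Hence $\f|_\D$ descends to a well-defined field $J$ on $B$ with $J^2=-I$, an almost complex structure.

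Next I would check that $J$ is integrable. Using the complexified splitting $TM^\C=\D^{1,0}\oplus\D^{0,1}\oplus\C\xi$, the bundle $T^{1,0}B$ is the $\pi$-image of $\D^{1,0}$, and brackets of $\pi$-basic $(1,0)$-fields are $\pi$-related to brackets on $B$. The anti-normality condition \eqref{eq:brackets} gives $[\D^{1,0},\D^{1,0}]_{\D^\C}\subset\D^{1,0}$, so that, after discarding the $\C\xi$-component (which dies under $d\pi$), the bracket of two sections of $T^{1,0}B$ is again of type $(1,0)$. Thus the Nijenhuis tensor of $J$ vanishes and, by the Newlander--Nirenberg theorem, $(B,J)$ is a complex manifold.

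It remains to produce the form. Since $d\eta(\xi,\cdot)=0$ and $\Lie_\xi d\eta=0$ (Proposition \ref{prop-deta-normal}), the $2$-form $d\eta$ is basic, hence it descends to a real $2$-form $\beta$ on $B$ with $\pi^*\beta=d\eta$, and $d(d\eta)=0$ forces $d\beta=0$. The $\f$-anti-invariance $d\eta(\f X,\f Y)=-d\eta(X,Y)$ projects to $\beta(JX,JY)=-\beta(X,Y)$, i.e.\ $\beta$ has vanishing $(1,1)$-part and splits as $\beta=\omega+\bar\omega$ with $\omega:=\beta^{2,0}$. Decomposing the identity $d\beta=0$ into bidegrees $(3,0),(2,1),(1,2),(0,3)$ yields $\partial\omega=0$ and $\bar\partial\omega=0$, whence $d\omega=0$: this is the desired closed $(2,0)$-form. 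Finally, if $\eta$ is a contact form then $\operatorname{rk}(\eta)=2n+1$, so in Proposition \ref{Prop.rank} one has $q=0$ and $d\eta$ is nondegenerate on $\D$; this nondegeneracy passes to $\beta$, and hence to $\omega$, turning $(B,J,\omega)$ into a complex symplectic manifold.

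The main obstacle is the integrability step: one must be sure that the transverse almost complex structure $J$ is genuinely integrable rather than merely well defined, and this is exactly the point at which the full strength of the anti-normal bracket condition \eqref{eq:brackets} is used. The remaining points---the descent of $\f$ and of $d\eta$, and the purely linear-algebraic extraction of the $(2,0)$-part---follow formally once the $\Lie_\xi$-invariances collected in Proposition \ref{prop-deta-normal} are in hand.
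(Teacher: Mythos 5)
Your proof is correct and follows essentially the same route as the paper's: pass to the local leaf space of the foliation generated by $\xi$, descend $\f$ and $d\eta$ using the $\Lie_\xi$-invariances of Proposition \ref{prop-deta-normal}, obtain integrability of the induced structure $J$ from anti-normality, and project $d\eta$ to the closed $(2,0)$-form, with nondegeneracy in the contact case. The only cosmetic differences are that you certify integrability via the complexified bracket condition \eqref{eq:brackets} together with Newlander--Nirenberg, whereas the paper invokes the equivalent fact $N_\f(X,Y)_\D=0$ directly, and that you are more explicit in splitting the projected $J$-anti-invariant real form as $\omega+\bar\omega$ and checking $d\omega=0$ by bidegree decomposition.
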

\begin{proof}
	Consider the local submersion $\pi:M\to M/\xi$. By (iii) of Proposition \ref{prop-deta-normal}, $\f$ descends to an almost complex structure $J$ on $M/\xi$ which turns out to be integrable, being $N_\f(X,Y)_\D=0$ for every $X,Y\in\Gamma(\D)$ . Furthermore, since $\Lie_\xi d\eta=0$ and $d\eta$ is $\f$-anti-invariant, $d\eta$ projects onto  a closed $J$-anti-invariant 2-form $\omega$, i.e. a 2-form of type $(2,0)$.\\
	In particular, if $\eta$ is a contact form, $\omega$ is a symplectic 2-form of type $(2,0)$, thus determining a complex symplectic structure $(J,\omega)$ on $M/\xi$ (see \cite{Besse,Bazzoni}).
\end{proof}

As a consequence we have the following:

\begin{theorem}\label{Thm:local submersion}
	Every anti-quasi-Sasakian manifold $(M,\f,\xi,\eta,g)$ admits a local Riemannian submersion over a K\"ahler manifold endowed with a closed $2$-form $\omega$ of type $(2,0)$.
\end{theorem}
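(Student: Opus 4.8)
The plan is to upgrade the preceding proposition, which already produces a local fibration $\pi\colon M\to M/\xi$ onto a complex manifold $(M/\xi,J)$ carrying a closed $(2,0)$-form $\omega$, by checking that in the presence of a compatible metric $g$ the submersion becomes \emph{Riemannian} and the induced almost complex structure $J$ is \emph{K\"ahler}. First I would verify that $\pi$ is a Riemannian submersion. By Proposition \ref{Prop:aqS-Killing}, the Reeb field $\xi$ is Killing and unit, so the flow of $\xi$ acts by isometries along the $1$-dimensional foliation $\langle\xi\rangle$; consequently the metric $g$ restricted to the horizontal distribution $\D=\ker\eta$ is projectable and descends to a well-defined Riemannian metric $\bar g$ on $M/\xi$ with $g(X,Y)=\bar g(\pi_*X,\pi_*Y)$ for horizontal $X,Y$. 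This is exactly the statement that $\pi$ is a Riemannian submersion with totally geodesic fibres (note $\nabla_\xi\xi=0$ from (i) of that proposition).

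Next I would confirm that the transverse structure $(J,\bar g)$ is K\"ahler. The compatibility $g(\f X,\f Y)=g(X,Y)-\eta(X)\eta(Y)$ descends to $\bar g(JX',JY')=\bar g(X',Y')$, so $(J,\bar g)$ is an almost Hermitian structure on $M/\xi$; its integrability was already established in the preceding proposition via $N_\f(X,Y)_\D=0$. It remains to show that the transverse fundamental $2$-form $\bar\Phi$, defined by $\bar\Phi(X',Y')=\bar g(X',JY')$ and satisfying $\pi^*\bar\Phi=\Phi|_\D$, is closed. Since $d\Phi=0$ by the aqS condition \eqref{eq:aqS} and $\Phi$ is projectable (being $\Lie_\xi\Phi=0$, which follows from $\Lie_\xi\f=0$ in (iii) of Proposition \ref{prop-deta-normal} together with $\xi$ Killing), the closedness passes to the base: $\pi^*d\bar\Phi=d\Phi|_\D=0$, hence $d\bar\Phi=0$. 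A closed fundamental form on an integrable almost Hermitian manifold means $\bar g$ is K\"ahler with respect to $J$, so $(M/\xi,J,\bar g,\omega)$ is a K\"ahler manifold equipped with the closed $(2,0)$-form $\omega$ inherited from the previous proposition.

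The only genuinely delicate point is the projectability of the tensors involved: one must argue that $g|_\D$, $\Phi$, and $d\eta$ are all basic (invariant under the flow of $\xi$ and horizontal), so that they descend to honest tensors on the (local) quotient. All three invariances are already in hand --- $\Lie_\xi\eta=0$ and $\Lie_\xi d\eta=0$ from (i)--(ii) of Proposition \ref{prop-deta-normal}, $\Lie_\xi\f=0$ from (iii), and $\Lie_\xi g=0$ from $\xi$ being Killing --- so the main obstacle reduces to the bookkeeping of verifying that the horizontal components are genuinely basic rather than merely invariant, i.e. that the interior products with $\xi$ vanish ($\iota_\xi\Phi=0$ since $\f\xi=0$, and $\iota_\xi d\eta=0$ by (i)). Once this is assembled, the statement follows by collecting the K\"ahler conclusion for $\bar g$ with the fact, recorded in the earlier proposition, that $d\eta$ projects to the closed $(2,0)$-form $\omega$.
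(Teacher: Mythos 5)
Your proposal is correct and follows essentially the same route as the paper's own proof: invoke the preceding proposition for the transverse complex structure and the closed $(2,0)$-form, use the Killing property of $\xi$ (Proposition \ref{Prop:aqS-Killing}) to make the local submersion Riemannian with a Hermitian transverse structure, and conclude K\"ahlerianity from $d\Phi=0$. The paper states this in three lines, leaving implicit the projectability bookkeeping that you spell out; your extra verifications ($\iota_\xi\Phi=0$, $\Lie_\xi\Phi=0$, totally geodesic fibres) are all correct and consistent with the paper's cited results.
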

\begin{proof}
	By Proposition \ref{Prop:aqS-Killing} we already know that $\xi$ is a Killing vector field. Therefore, locally, the space of leaves $M/\xi$ is endowed with a Hermitian structure $(J,k)$, with respect to which the projection $\pi:M\to M/\xi$ is actually a local Riemannian submersion. Moreover, denoting by $\Omega$ the fundamental 2-form of $(J,k)$, $d\Phi=0$ immediately gives $d\Omega=0$, which proves that $(J,k)$ is a K\"ahler structure.
\end{proof}
\medskip

Notice that every quasi-Sasakian manifold $(M,\f,\xi,\eta,g)$ locally fibers over a K\"ahler manifold $(M/\xi,J,k)$ and the $\f$-invariant 2-form $d\eta$ projects onto a closed 2-form of type $(1,1)$, with the same rank of $d\eta$. In fact both quasi-Sasakian and anti-quasi-Sasakian manifolds can be viewed as subclasses of transversely K\"ahler almost contact metric manifolds.

Precisely, we say that an almost contact metric manifold $(M,\f,\xi,\eta,g)$ is \textit{transversely K\"ahler} if the structure tensor fields $(\f,g)$ are projectable along the 1-dimensional foliation generated by $\xi$ and induce a transverse K\"ahler structure \cite{CM.dN.Y.}.

\begin{proposition}
	An almost contact metric manifold $(M,\f,\xi,\eta,g)$ is transversely K\"ahler if and only if $$d\Phi=0,\quad N_\f(\xi,X)=0, \quad N_\f(X,Y,Z)=0\quad \forall X,Y,Z\in\Gamma(\D),$$
	i.e. if and only if $M$ is generalized-quasi-Sasakian with $\Lie_\xi\f=0$.
\end{proposition}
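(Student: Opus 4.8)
The plan is to prove the two equivalences packaged in the statement: first that the three differential conditions are equivalent to $(M,\f,\xi,\eta,g)$ being generalized-quasi-Sasakian with $\Lie_\xi\f=0$, and then that this is equivalent to being transversely K\"ahler. Throughout I would unwind the transverse K\"ahler condition along the Reeb foliation into three requirements: (a) $\f$ is projectable, which by Remark \ref{rmk:N(xi,.)-Lie_f} means $N_\f(\xi,\cdot)=0$, equivalently $\Lie_\xi\f=0$; (b) the transverse metric is holonomy invariant, which for the unit Reeb flow is exactly the condition that $\xi$ be Killing; (c) the induced transverse almost Hermitian structure $(J,h,\Omega)$ on the local leaf space $M/\xi$ is K\"ahler, i.e. $J$ is integrable and $\Omega$ is closed.

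For the first equivalence, the implication from generalized-quasi-Sasakian $+\ \Lie_\xi\f=0$ to the three conditions is immediate except for upgrading $d\Phi=0$ from $\Gamma(\D)$ to all of $TM$. Since $\Lie_\xi\f=0$ gives $N_\f(\xi,\cdot)=0$, the identity $N_\f(\xi,X,\f Y)=d\Phi(\xi,X,Y)$ established in \eqref{eq:N_dPhi} for any generalized-quasi-Sasakian manifold forces $d\Phi(\xi,\cdot,\cdot)=0$; together with $d\Phi=0$ on $\Gamma(\D)$ and the splitting $TM=\D\oplus\langle\xi\rangle$ this yields $d\Phi=0$. The converse is the crux. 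Assuming $d\Phi=0$ and $N_\f(\xi,\cdot)=0$ (hence $d\eta(\xi,\cdot)=0$ by Remark \ref{rmk:N(xi,.)-Lie_f}), I would specialize the Levi-Civita formula \eqref{eq:g(nablaf,.)} to $Y=\xi$: all terms containing $d\Phi$, $N_\f(\xi,\cdot)$ or $\f\xi=0$ drop out, leaving $2g((\nabla_X\f)\xi,Z)=d\eta(X,\f Z)$. Combining this with $\nabla_X\xi=\f((\nabla_X\f)\xi)$, the identity $\f^2=-I+\eta\otimes\xi$ and $d\eta(X,\xi)=0$, one obtains $g(\nabla_X\xi,Y)=\tfrac12 d\eta(X,Y)$, which is skew-symmetric in $X,Y$; hence $\xi$ is Killing and the manifold is generalized-quasi-Sasakian.

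For the second equivalence I would match the three conditions against (a)--(c). Projectability of $\f$ is precisely $N_\f(\xi,\cdot)=0$, while projectability of $g$, i.e. $\xi$ Killing, is supplied by the previous paragraph from $d\Phi=0$ and $N_\f(\xi,\cdot)=0$. Integrability of the transverse complex structure $J$ amounts to the vanishing of the $\D$-component of $N_\f$ on $\Gamma(\D)$, that is $N_\f(X,Y,Z)=0$ for all $X,Y,Z\in\Gamma(\D)$. Finally, once $\xi$ is Killing and $\Lie_\xi\f=0$, the fundamental form $\Phi$ is basic: $\iota_\xi\Phi=0$ because $\f\xi=0$, and $\Lie_\xi\Phi=(\Lie_\xi g)(\cdot,\f\,\cdot)+g(\cdot,(\Lie_\xi\f)\,\cdot)=0$. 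Hence $\Phi=\pi^*\Omega$ for the local submersion $\pi\colon M\to M/\xi$, so $d\Phi=\pi^*d\Omega$, and by injectivity of $\pi^*$ the condition $d\Phi=0$ is equivalent to $d\Omega=0$, i.e. to closedness of the transverse fundamental form. Collecting (a)--(c) gives the transverse K\"ahler property.

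I expect the Killing computation in the second paragraph to be the main obstacle, since this is where the genuinely new content lies, the remaining implications being bookkeeping with the splitting $TM=\D\oplus\langle\xi\rangle$ and identities already at hand. The point requiring the most care is tracking exactly which terms of \eqref{eq:g(nablaf,.)} survive after setting $Y=\xi$. A secondary delicate step is the verification that $\Phi$ is basic, which is what transfers closedness between $\Phi$ and the transverse fundamental form $\Omega$, and which uses both $\xi$ Killing and $\Lie_\xi\f=0$.
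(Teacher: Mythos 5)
Your proof is correct, and its skeleton---unwinding ``transversely K\"ahler'' into projectability of $\f$, invariance of the transverse metric, transverse integrability, and closedness of the transverse K\"ahler form---is the same decomposition as the paper's \eqref{eq:transv.Kahler}. The genuine difference is the mechanism behind the Killing property. The paper's entire proof runs on the single identity \eqref{eq:dPhi(xi,X,Y)}, $d\Phi(\xi,X,Y)=(\Lie_\xi g)(X,\f Y)+g(X,(\Lie_\xi\f)Y)$, which (given $\Lie_\xi\f=0$) makes invariance of $g$ on $\D$ equivalent to $d\Phi(\xi,\cdot,\cdot)=0$, and, evaluated at $X=\xi$, upgrades that invariance to $\xi$ being Killing. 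You instead specialize \eqref{eq:g(nablaf,.)} at $Y=\xi$ to get $2g((\nabla_X\f)\xi,Z)=d\eta(X,\f Z)$, hence $g(\nabla_X\xi,Y)=\tfrac12\,d\eta(X,Y)$, and read off Killing from skew-symmetry; your computation is correct (all the discarded terms do vanish, the term $-d\eta(\f Z,\xi)\eta(X)$ needing $d\eta(\xi,\cdot)=0$ from Remark \ref{rmk:N(xi,.)-Lie_f}, as you note). This costs a longer computation but buys a stronger byproduct: the explicit identification of $\nabla\xi$ as the skew-symmetric operator associated with $\tfrac12\,d\eta$ (what becomes $\nabla\xi=-\psi$, equations \eqref{eq:psi} and \eqref{eq:g-psi-deta}, in the aqS case), obtained here under hypotheses weaker than anti-quasi-Sasakian. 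For the converse implication you reuse \eqref{eq:N_dPhi}, which is legitimate since the paper establishes it for every generalized-quasi-Sasakian manifold. Note also that your basicness step is not really new machinery: $\Lie_\xi\Phi=(\Lie_\xi g)(\cdot,\f\,\cdot)+g(\cdot,(\Lie_\xi\f)\,\cdot)$ combined with Cartan's formula $\iota_\xi d\Phi=\Lie_\xi\Phi$ (valid because $\iota_\xi\Phi=0$) is exactly the paper's identity \eqref{eq:dPhi(xi,X,Y)} in disguise. One point you should make explicit: projectability of the transverse metric means only $(\Lie_\xi g)(X,Y)=0$ for $X,Y\in\Gamma(\D)$, not that $\xi$ is Killing outright; in the direction from transversely K\"ahler to the three conditions, your basicness argument uses the full Killing condition, and the missing piece $\nabla_\xi\xi=0$ follows from $\Lie_\xi\f=0$ alone, since $g(\nabla_\xi\xi,\f Y)=(\Lie_\xi g)(\xi,\f Y)=-g(\xi,[\xi,\f Y])=-g(\xi,\f[\xi,Y])=0$. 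This is precisely the point settled by the last sentence of the paper's proof.
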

\begin{proof}
	According to the above definition $(M,\f,\xi,\eta,g)$ is transversely K\"ahler if and only if
	\begin{equation}\label{eq:transv.Kahler}
		(\Lie_\xi\f)X=0,\ (\Lie_\xi g)(X,Y)=0,\ N_\f(X,Y,Z)=0,\ d\Phi(X,Y,Z)=0
	\end{equation}
	for all $X,Y,Z\in\Gamma(\D)$. By Remark \ref{rmk:N(xi,.)-Lie_f},  the first equation in \eqref{eq:transv.Kahler} is equivalent to $N_\f(\xi,\cdot)=0$.  By a direct computation the following formula holds:
	\begin{equation}\label{eq:dPhi(xi,X,Y)}
		d\Phi(\xi,X,Y)=(\Lie_\xi g)(X,\f Y)+g(X,(\Lie_\xi\f)Y)
	\end{equation}
	for every $X,Y\in\X(M)$. Therefore, when $\Lie_\xi\f=0$, the second equation in \eqref{eq:transv.Kahler} is equivalent to $d\Phi(\xi,X,Y)=0$.
	Regarding the last claim, if $M$ transversely K\"ahler, by \eqref{eq:transv.Kahler}, it is generalized quasi-Sasakian provided that $(\Lie_\xi g)(\xi,\cdot)=0$, but this is consequence of \eqref{eq:dPhi(xi,X,Y)} for $X=\xi$.
\end{proof}

Taking into account the above characterization of transversely K\"ahler almost contact metric manifolds, it is clear that whithin this class, quasi-Sasakian and anti-quasi-Sasakian manifolds are characterized respectively by the $\f$-invariance and $\f$-anti-invariance of $d\eta$, which justifies the name for anti-quasi-Sasakian structures.\\
\smallskip

\begin{figure}[h!]
\centering
\begin{tikzpicture}
	[>=stealth,
	block/.style={rectangle,draw,minimum height=1cm, minimum width=2.8cm}]
		
	\node(1)[block,align=center]{Generalized-\\quasi-Sasakian};
	\node(3)[right=1cm of 1,align=center] {\footnotesize $\Lie_\xi g=0$\\ \footnotesize $d\Phi(X,Y,Z)=0\quad N_\f(X,Y,Z)=0$
	\\ \footnotesize $\forall X,Y,Z\in\Gamma(\D)$};
	\node(7) [left=0.3cm of 1, align=center]{\small $\calC_6\oplus\calC_7\oplus\calC_{10}\oplus\calC_{11}$};
	\node(2)[block,align=center,below=2cm of 1]{Transversely\\K\"ahler}
	[edge from parent/.style={draw,->}, level distance=4cm, sibling distance=5cm,]
		child{node(5)[block,align=center]{quasi-Sasakian} edge from parent node [anchor=east] {\small \scriptsize\parbox{3.5cm}{ \centering $d\eta(\f X,\f Y)=d\eta(X,Y)$\\\medskip$\big(N_\f(X,Y,\xi)=0\big)$}}}
		child{node(6)[block,align=center]{anti-quasi-Sasakian} edge from parent node [anchor=west] {\small\scriptsize\parbox{3.8cm}{  \centering $d\eta(\f X,\f Y)=-d\eta(X,Y)$\\ \medskip $\quad\big(N_\f(X,Y,\xi)=2d\eta(X,Y)\big)$}}};
	\node(4)[right=0.5cm of 2,align=center] {\footnotesize $d\Phi=0\quad N_\f(\xi,X)=0\quad N_\f(X,Y,Z)=0$\\ \footnotesize $\forall X,Y,Z\in\Gamma(\D)$};
	\draw[->] (1)--node[fill=white]{\small $\Lie_\xi\f=0$} (2);
	\node(8)[below=.2cm of 5, align=center]{\small $\calC_6\oplus\calC_7$};
	\node(9)[below=.2cm of 6, align=center]{\small $\calC_{10}\oplus\calC_{11}\ \text{with}\ \Lie_\xi\f=0$};
\end{tikzpicture}
\smallskip

\caption{Quasi-Sasakian and anti-quasi-Sasakian structures within $\calC_6\oplus\calC_7\oplus\calC_{10}\oplus\calC_{11}$.}
\label{Fig:1}
\end{figure}
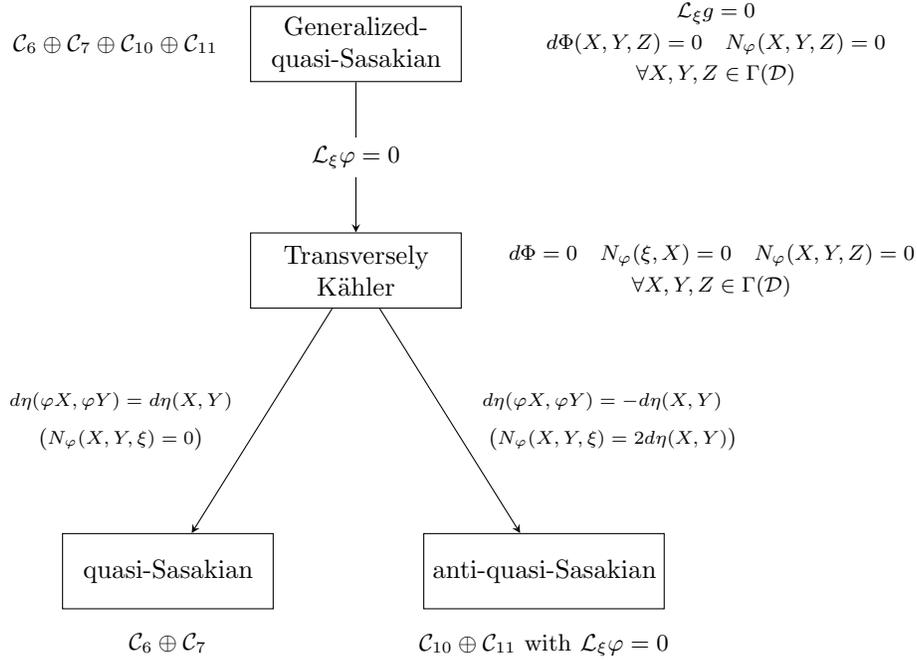
\newpage

As consequence of Theorem \ref{Thm:local submersion}, we have the following Boothby-Wang type theorem.

\begin{theorem}\label{Thm:Boothby-Wang1}
	Let $(M,\f,\xi,\eta,g)$ be an anti-quasi-Sasakian manifold, such that $\xi$ is regular, with compact orbits. Then, up to scaling $\xi$, it generates a free $\mathbb{S}^1$-action on $M$, so that $M$ is a principal circle bundle over a K\"ahler manifold $M/\xi$ endowed with a closed $2$-form $\omega$ of type $(2,0)$.
	In particular, $\eta$ is a connection form on $M$ and its curvature form is given by $d\eta=\pi^*\omega$, where $\pi:M\to M/\xi$ is the bundle projection.
\end{theorem}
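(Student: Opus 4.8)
The plan is to establish the classical Boothby--Wang machinery adapted to the anti-quasi-Sasakian setting, leveraging the transverse K\"ahler structure already produced in Theorem \ref{Thm:local submersion}. First I would use the regularity of $\xi$ together with the fact that $\xi$ is a Killing unit vector field (Proposition \ref{Prop:aqS-Killing}) and has compact orbits: by the standard theory of regular Killing vector fields, the orbits are the fibers of a smooth fibration $\pi:M\to M/\xi$, and after rescaling $\xi$ so that all orbits have the same period $2\pi$, the flow of $\xi$ integrates to a free, effective $\mathbb{S}^1$-action on $M$. This makes $\pi:M\to M/\xi$ a principal $\mathbb{S}^1$-bundle, with $\xi$ the fundamental vector field of the action.

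Next I would verify that $\eta$ is a principal connection form. This requires two conditions: that $\eta$ reproduces the infinitesimal generator, i.e.\ $\eta(\xi)=1$, which holds by definition of an almost contact structure; and that $\eta$ is invariant under the $\mathbb{S}^1$-action, i.e.\ $\Lie_\xi\eta=0$. The latter is exactly part (i) of Proposition \ref{prop-deta-normal}, valid for any anti-normal manifold. Since $\mathbb{S}^1$ is abelian, invariance under the generator suffices for full $\mathrm{Ad}$-invariance (the adjoint action being trivial), so $\eta$ is genuinely a connection $1$-form. The curvature of this connection is the horizontal, basic $2$-form whose pullback is $d\eta$; because $\Lie_\xi d\eta=0$ (part (ii) of Proposition \ref{prop-deta-normal}) and $\iota_\xi d\eta=d\eta(\xi,\cdot)=0$ (part (i)), the form $d\eta$ is basic and hence descends to a well-defined closed $2$-form $\omega$ on $M/\xi$ with $d\eta=\pi^*\omega$. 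That $\omega$ is of type $(2,0)$ with respect to the induced complex structure, and that the transverse metric is K\"ahler, is precisely the content of Theorem \ref{Thm:local submersion}, which I would simply invoke.

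The main subtlety, and the step I expect to require the most care, is the passage from "regular with compact orbits" to a genuinely free $\mathbb{S}^1$-\emph{action} with a globally consistent period. A priori different orbits could have different periods, or the period could fail to be locally constant; the resolution is that $\xi$ being a nowhere-vanishing Killing field with compact orbits forces the period to be a locally constant, hence (on a connected $M$) constant, function, so a single rescaling synchronizes all orbits. I would cite the standard regularity results (as in Blair's treatment of the classical Boothby--Wang theorem) rather than reprove them. The remaining verifications---that the transverse almost complex structure is integrable and the transverse metric K\"ahler---are already packaged in the earlier results, so the proof reduces to assembling these ingredients and checking that $\eta$ satisfies the two defining axioms of a connection form, both of which follow directly from Proposition \ref{prop-deta-normal}.
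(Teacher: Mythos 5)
Your proposal is correct and follows exactly the route the paper intends: the paper gives no separate proof of Theorem \ref{Thm:Boothby-Wang1}, presenting it as a direct consequence of Theorem \ref{Thm:local submersion} (transverse K\"ahler structure and the projectable $(2,0)$-form) combined with the classical Boothby--Wang regularity machinery, which is precisely what you assemble, using Proposition \ref{prop-deta-normal} to verify that $\eta$ is a connection form and that $d\eta$ is basic. Your additional care about the constancy of the period function is a reasonable filling-in of the standard results the paper implicitly cites.
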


Conversely we have the following:

\begin{theorem}\label{Thm:Boothby-Wang2}
	Let $(B,J,k)$ be a K\"ahler manifold endowed with a closed $2$-form $\omega$ of type $(2,0)$ which represents an element of the integral cohomology group $H^2(B,\mathbb{Z})$. Then there exist a principal circle bundle $M$ over $B$ and a connection form $\eta$ on $M$ such that the curvature form is given by $d\eta=\pi^*\omega$.
	Moreover, $M$ is endowed with an anti-quasi-Sasakian structure $(\f,\xi,\eta,g)$.
\end{theorem}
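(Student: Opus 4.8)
The plan is to construct $M$ as the principal circle bundle prescribed by the integral class $[\omega]\in H^2(B,\mathbb{Z})$, and then to transport the K\"ahler data $(J,k)$ together with $\omega$ upward along the projection $\pi$ to build the almost contact metric structure, finally verifying the two defining conditions of Definition \ref{Def:aqS}. Concretely, since $\omega$ represents an integral class, by the classical Boothby-Wang/Kobayashi construction there exists a principal $\mathbb{S}^1$-bundle $\pi:M\to B$ and a connection $1$-form $\eta$ whose curvature satisfies $d\eta=\pi^*\omega$. I would let $\xi$ be the vertical vector field generating the $\mathbb{S}^1$-action, normalized so that $\eta(\xi)=1$; by construction $\eta$ is invariant under the action, hence $\Lie_\xi\eta=0$, and since $d\eta=\pi^*\omega$ is horizontal and basic we immediately get $d\eta(\xi,\cdot)=0$.

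Next I would define the tensor $\f$ and the metric $g$. The connection $\eta$ splits $TM=\mathcal{H}\oplus\langle\xi\rangle$ with $\mathcal{H}=\ker\eta$, and $\pi_*$ restricts to an isomorphism $\mathcal{H}_x\to T_{\pi(x)}B$ for each $x$. I set $\f$ to be zero on $\xi$ and the horizontal lift of $J$ on $\mathcal{H}$, i.e. $\f X$ is the horizontal lift of $J\pi_*X$ for horizontal $X$; and I set $g=\pi^*k+\eta\otimes\eta$, so that $\xi$ is a unit vector orthogonal to $\mathcal{H}$ and $g|_{\mathcal{H}}$ is the lift of $k$. The almost contact metric axioms $\f^2=-I+\eta\otimes\xi$, $\eta(\xi)=1$, and the compatibility $g(\f X,\f Y)=g(X,Y)-\eta(X)\eta(Y)$ then follow directly from the corresponding Hermitian identities for $(J,k)$ on $B$, checked separately on $\mathcal{H}$ and on $\langle\xi\rangle$. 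Because $J$ and $k$ are basic (pulled back), and $\eta$ is $\mathbb{S}^1$-invariant, one verifies $\Lie_\xi\f=0$ and $\Lie_\xi g=0$, so the structure is projectable and the transverse geometry is exactly the K\"ahler structure $(J,k)$ downstairs.

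It remains to verify the two equations in \eqref{eq:aqS}. For $d\Phi=0$: the fundamental form satisfies $\Phi(X,Y)=g(X,\f Y)$, and on horizontal vectors $\Phi$ is precisely $\pi^*\Omega$, where $\Omega$ is the K\"ahler form of $(J,k)$; since $\Phi(\xi,\cdot)=0$ and $\Phi=\pi^*\Omega$ is basic, $d\Phi=\pi^*d\Omega=0$ follows from the K\"ahler condition $d\Omega=0$. For the anti-normality $N_\f=2d\eta\otimes\xi$: by the characterization built into the excerpt, it suffices to check the three items in \eqref{eq:deta-normal} (equivalently, via Remark \ref{Rmk:aqS_bis}, the $\f$-anti-invariance of $d\eta$ together with $N_\f(X,Y)_\D=0$ and $N_\f(\xi,\cdot)=0$). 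Now $N_\f(\xi,\cdot)=0$ is immediate from $\Lie_\xi\f=0$ via Remark \ref{rmk:N(xi,.)-Lie_f}. The horizontal part $N_\f(X,Y)_\D$ computes, under $\pi_*$, to the Nijenhuis tensor of $J$, which vanishes since $J$ is integrable. Finally, because $\omega$ is of type $(2,0)$ it is $J$-anti-invariant, $\omega(JX,JY)=-\omega(X,Y)$, and pulling back gives the $\f$-anti-invariance $d\eta(\f X,\f Y)=-d\eta(X,Y)$; by \eqref{eq:eta(N_f)} this is exactly $\eta(N_\f(X,Y))=2d\eta(X,Y)$, so the vertical part of $N_\f$ equals $2d\eta(X,Y)\xi$. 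Combining the horizontal and vertical parts yields $N_\f=2d\eta\otimes\xi$, i.e. the structure is anti-quasi-Sasakian.

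The step I expect to be the main obstacle is the careful bookkeeping of horizontal lifts in the computation of $N_\f$: one must relate the Lie brackets of horizontal lifts to the lift of the downstairs bracket, absorbing the curvature correction term $d\eta(X,Y)\xi=\pi^*\omega(X,Y)\,\xi$ that appears precisely in the vertical component. The identity that the vertical part of $[\tilde X,\tilde Y]$ (for horizontal lifts $\tilde X,\tilde Y$) is $-d\eta(\tilde X,\tilde Y)\xi=-\pi^*\omega(X,Y)\xi$ is the crux, and it is exactly this curvature term — controlled by the type $(2,0)$ hypothesis on $\omega$ — that produces the anti-normal defect $2d\eta\otimes\xi$ rather than normality. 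Once this is pinned down, the remaining verifications are routine transfers of the K\"ahler identities through $\pi^*$.
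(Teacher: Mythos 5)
Your proposal is correct and follows essentially the same route as the paper: Kobayashi's theorem for the bundle, $\f$ defined as the horizontal lift of $J$, $g=\pi^*k+\eta\otimes\eta$, $\Phi=\pi^*\Omega$ hence closed, and the anti-normality check reducing to $N_J=0$ for the horizontal part and the $J$-anti-invariance of the $(2,0)$-form $\omega$ for the vertical part. The only difference is organizational: you split the verification via Remark \ref{Rmk:aqS_bis} and \eqref{eq:deta-normal}, whereas the paper performs one direct computation of $N_\f(X^*,Y^*)$ and $N_\f(X^*,\xi)$; the ingredients, including the crucial bracket identity $\eta([X^*,Y^*])=-d\eta(X^*,Y^*)$, are identical.
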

\begin{proof}
	The first part is due to the theorem of Kobayashi (see \cite[Theorem 2.5]{Blair}). Let $\eta$ be a connection form on $M$ such that $d\eta=\pi^*\omega$ and let us consider $\xi\in\X(M)$ as the fundamental vector field associated to the generator $1\equiv\frac{d}{dt}$ of the abelian Lie algebra $\R$ of $\mathbb{S}^1$. Then $\xi$ is vertical and $\eta(\xi)=1$.
	We define the $(1,1)$-tensor field $\f$ by 
	$$\f\xi=0, \qquad \f X^*=(JX)^*,$$
	where $X^*$ denotes the horizontal lift of a vector field $X\in\X(B)$. Then one has that $\eta\circ\f=0$ and
	$$\f^2X^*=(J^2X)^*=-X^*,$$
	so that $\f^2=-I+\eta\otimes\xi$.
	Moreover, let us define the Riemannian metric $g=\pi^*k+\eta\otimes\eta$ on $M$. For every $X,Y\in\X(B)$ it satisfies:
	$$g(\f X^*,\f Y^*)=k(JX,JY)\circ\pi=k(X,Y)\circ\pi=g(X^*,Y^*).$$
	Being also $g(X^*,\xi)=0$ and $g(\xi,\xi)=1$, $(\f,\xi,\eta,g)$ is an almost contact metric structure on $M$. Clearly, its fundamental 2-form $\Phi$ is given by $\Phi=\pi^*\Omega$, being $\Omega$ the fundamental 2-form of the K\"ahler structure $(J,k)$ on $B$. Thus $d\Phi=\pi^*(d\Omega)=0$.	Now, for every $X,Y\in\X(B)$,
	$$[X^*,\xi]=0,\quad h[X^*,Y^*]=[X,Y]^*,\quad \pi_*[X^*,Y^*]=[X,Y],$$
	where $h$ denotes the horizontal component. Then we obtain
\[N_\f(X^*,\xi)=\f^2[X^*,\xi]-\f[\f X^*,\xi]+d\eta(X^*,\xi)
		=-\f[(JX)^*,\xi]-\eta[X^*,\xi]=0,\]
and also
\begin{eqnarray*}
	N_\f(X^*,Y^*)&=&\f^2[X^*,Y^*]+[\f X^*,\f Y^*]-\f[\f X^*,Y^*]-\f[X,\f Y^*]+d\eta(X^*,Y^*)\xi\\
	&=&(J^2\pi_*[X^*,Y^*])^*+[(JX)^*,(JY)^*]-(J\pi_*[(JX)^*,Y^*])^*\\&&{}-(J\pi_*[X^*,(JY)^*])^*+d\eta(X^*,Y^*)\xi\\
	&=&(J^2[X,Y])^*+[JX,JY]^*+\eta([(JX)^*,(JY)^*])\xi-(J[JX,Y])^*\\&&{}-(J[X,JY])^*+d\eta(X^*,Y^*)\xi\\
	&=&(N_J(X,Y))^*-d\eta((JX)^*,(JY)^*)\xi+d\eta(X^*,Y^*)\xi\\
	&=&{}(-\omega(JX,JY)\circ\pi+\omega(X,Y)\circ\pi)\xi\\
	&=&2(\omega(X,Y)\circ\pi)\xi=2d\eta(X^*,Y^*)\xi.
\end{eqnarray*}
This shows that $(M,\f,\xi,\eta,g)$ is an anti-quasi-Sasakian manifold.
\end{proof}

A well known example of K\"ahler manifolds endowed with a closed $(2,0)$-form is given by hyperk\"ahler manifolds (see for instance \cite[14.B]{Besse}).  In this regards we point out that, in the hypotheses of Theorem \ref{Thm:Boothby-Wang1}, if $M$ is compact and the anti-quasi-Sasakian structure has maximal rank, then the base manifold $M/\xi$ is a compact K\"ahler manifold endowed with a complex symplectic structure. In fact, a result due to A. Beauville (\cite{Beauville}, or \cite[Theorem 14.16]{Besse}) ensures that there exists a hyperk\"ahler metric on $M/\xi$.

In the next section we will see how anti-quasi-Sasakian structures naturally appear on a special class of Riemannian manifolds with a 1-dimensional foliation whose space of leaves admits a hyperk\"ahler structure.

\section{Special classes of anti-quasi-Sasakian manifolds}\label{Sec:examples}
\subsection{$Sp(n)$-almost contact metric structures}
\begin{definition}
	We call \textit{$Sp(n)$-almost contact metric manifold} any smooth manifold $M$ admitting three almost contact metric structures $(\f_i,\xi,\eta,g)$, $i=1,2,3$, sharing the same Reeb vector field $\xi$ and Riemannian metric $g$, and satisfying the quaternionic identities
	\begin{equation}\label{eq:quaternionic id.}
		\f_i\f_j=\f_k=-\f_j\f_i
	\end{equation}
	for every even permutation $(i,j,k)$ of $(1,2,3)$.
	This is equivalent to require that the structure group of the frame bundle is reducible to $Sp(n)\times 1$. 	
\end{definition}

An $Sp(n)$-almost contact metric manifold $M$ has dimension $4n+1$. The tangent bundle splits as $TM=\D\oplus\left<\xi\right>$,
where $\D=\operatorname{Ker}\eta=\operatorname{Im}\f_i=\left<\xi\right>^\perp$, $i=1,2,3$.
In particular the manifold admits local orthonormal frames of type $\{X_l,\f_1X_l,\f_2X_l,\f_3X_l,\xi\}$, $l=1,\dots,n$.
In the following an $Sp(n)$-almost contact metric structure will be denoted by $(\f_i,\xi,\eta,g)$, omitting to specify that the index $i$ runs in $\{1,2,3\}$.
\medskip

\begin{remark}
	For a $4n$-dimensional Riemannian manifold $(M,g)$, an $Sp(n)$-reduction of the structure group of the frame bundle is equivalent to the existence of three compatible almost complex structures $J_1,J_2,J_3$ satisfying $J_iJ_j=J_k=-J_jJ_i$ for every even permutation $(i,j,k)$ of $(1,2,3)$. One says that $(J_1,J_2,J_3,g)$ is an almost hyperhermitian structure.
	For these manifolds the Hitchin's Lemma \cite{Hitchin} states that if the three fundamental $2$-forms are closed, then all the almost complex structures are integrable, and thus the manifold is hyperk\"ahler.
\end{remark}
\medskip

For an $Sp(n)$-almost contact metric manifold we will see how the closedness of the fundamental 2-forms $\Phi_i$, $i=1,2,3$, effects on the normality of the almost contact structures. To this aim we will use the following lemma; the proof is omitted since it is exactly the same as in \cite[Lemma 4.1]{CappellettiM.-Dileo}, where it is shown in the 5-dimensional case for an $SU(2)$-structure.

\begin{lemma}\label{Lemma:CM-D}
	Let $(M,\f_i,\xi,\eta,g)$ be an $Sp(n)$-almost contact metric manifold. Then
	\begin{eqnarray*}\label{eq:lemmaCM-D}
		g(N_{\f_i}(X,Y),\f_jZ)&=&d\Phi_j(X,Y,Z)-d\Phi_j(\f_iX,\f_iY,Z)\nonumber\\
		&&-d\Phi_k(\f_iX,Y,Z)-d\Phi_k(X,\f_iY,Z),
	\end{eqnarray*}
	for every $X,Y,Z\in\X(M)$ and $(i,j,k)$ even permutation of $(1,2,3)$.
\end{lemma}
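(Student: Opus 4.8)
The plan is to reduce the identity to a computation with the classical Nijenhuis torsion of $\f_i$ and then to exploit the quaternionic identities \eqref{eq:quaternionic id.} to convert covariant derivatives of $\f_i$ into covariant derivatives of $\f_j$ and $\f_k$, whose cyclic sums reproduce $d\Phi_j$ and $d\Phi_k$.

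First I would observe that, since $\f_j Z\in\Gamma(\D)$ is orthogonal to $\xi$, the vertical part $d\eta(X,Y)\xi$ of $N_{\f_i}(X,Y)=[\f_i,\f_i](X,Y)+d\eta(X,Y)\xi$ contributes nothing to the pairing, so it suffices to evaluate $g([\f_i,\f_i](X,Y),\f_j Z)$. Writing the Lie brackets in the definition of $[\f_i,\f_i]$ by means of the torsion-free Levi-Civita connection, a direct manipulation yields
$$[\f_i,\f_i](X,Y)=(\nabla_{\f_i X}\f_i)Y-(\nabla_{\f_i Y}\f_i)X+\f_i(\nabla_Y\f_i)X-\f_i(\nabla_X\f_i)Y.$$
Pairing with $\f_j Z$ and using the skew-symmetry of $\f_i$ together with $\f_i\f_j=\f_k$ (so that $g(\f_i W,\f_j Z)=-g(W,\f_k Z)$), the last two terms turn into pairings against $\f_k Z$, and the whole expression is reduced to evaluating tensors of the form $g((\nabla_W\f_i)V,\f_j Z)$ and $g((\nabla_W\f_i)V,\f_k Z)$.

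The key step is then to rewrite these using $\f_i=\f_j\f_k$, which by the derivation property of $\nabla$ gives $\nabla_W\f_i=(\nabla_W\f_j)\f_k+\f_j(\nabla_W\f_k)$. Substituting this relation, and repeatedly applying the metric compatibility $g(\f_jA,\f_jB)=g(A,B)-\eta(A)\eta(B)$, the skew-symmetry of each operator $\nabla_W\f_j$, and the quaternionic identities, the right-hand side reorganizes into covariant derivatives of the fundamental forms $\Phi_j$ and $\Phi_k$. Recognizing the cyclic sums $d\Phi_j=\mathfrak{S}(\nabla\Phi_j)$ and $d\Phi_k=\mathfrak{S}(\nabla\Phi_k)$ evaluated at the arguments $(X,Y,Z)$, $(\f_i X,\f_i Y,Z)$, $(\f_i X,Y,Z)$ and $(X,\f_i Y,Z)$ then produces exactly the claimed combination.

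The main obstacle is purely organizational rather than conceptual: the substitution generates a large number of covariant-derivative terms, and one must carefully track the $\eta$-corrections arising from $\f_j^2=-I+\eta\otimes\xi$ and check that they cancel, so that the surviving terms assemble precisely into the four $d\Phi$-terms with the correct signs. Since the argument nowhere uses the particular index $i$, it applies verbatim for every even permutation $(i,j,k)$ of $(1,2,3)$, exactly as in the five-dimensional $SU(2)$ case treated in \cite[Lemma 4.1]{CappellettiM.-Dileo}.
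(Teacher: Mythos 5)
Your proposal is correct and is essentially the paper's own approach: the paper in fact omits the proof, stating it is the same as the direct computation of \cite[Lemma 4.1]{CappellettiM.-Dileo}, which is precisely the Levi-Civita/quaternionic-identities calculation you outline, and this calculation does close (the $\eta$-corrections coming from $\f_j^2=-I+\eta\otimes\xi$ and from $(\nabla_W\f_j)\xi=-\f_j\nabla_W\xi$ cancel pairwise). The one bookkeeping point your sketch glosses over is that the expansion of $g(N_{\f_i}(X,Y),\f_jZ)$ only produces the $\nabla_X,\nabla_Y,\nabla_{\f_iX},\nabla_{\f_iY}$ terms of the four exterior derivatives, so to recognize the full cyclic sums $d\Phi=\mathfrak{S}(\nabla\Phi)$ one must also insert the missing combination $(\nabla_Z\Phi_j)(X,Y)-(\nabla_Z\Phi_j)(\f_iX,\f_iY)-(\nabla_Z\Phi_k)(\f_iX,Y)-(\nabla_Z\Phi_k)(X,\f_iY)$ and check that it vanishes identically, which it does by the same relations $\f_j\f_i=-\f_k$ and $\f_k\f_i=\f_j$ applied to $(\nabla_Z\f_j)\f_i$ and $(\nabla_Z\f_k)\f_i$.
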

\medskip

\begin{theorem}\label{Thm:Sp(n)-struct.}
	Let $(M,\f_i,\xi,\eta,g)$ be an $Sp(n)$-almost contact metric manifold such that
	\begin{equation}\label{eq:hypothesis}
		d\Phi_1=0,\quad d\Phi_2=0,\quad d\eta=2\Phi_3.
	\end{equation}
	Then $(\f_1,\xi,\eta,g)$ and $(\f_2,\xi,\eta,g)$ are anti-quasi-Sasakian structures, while $(\f_3,\xi,\eta,g)$ is a Sasakian structure.
	In particular, $M$ locally fibers onto a hyperk\"ahler manifold.
\end{theorem}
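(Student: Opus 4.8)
The plan is to use the characterization of anti-quasi-Sasakian manifolds from Theorem \ref{Thm:char.aqS} together with Lemma \ref{Lemma:CM-D}, which expresses the Nijenhuis torsion of each $\f_i$ in terms of the exterior derivatives of the fundamental $2$-forms. First I would compute $d\Phi_3$. Since $d\eta=2\Phi_3$ and $d^2=0$, we immediately get $d\Phi_3=0$, so in fact \emph{all three} fundamental $2$-forms are closed. This is the crucial observation: the hypotheses give us a full almost hyperhermitian structure with three closed fundamental forms on the transverse geometry, which is exactly the setting where the transverse Hitchin Lemma will apply.

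Next I would establish the normality/anti-normality of the individual structures. For the Sasakian structure $(\f_3,\xi,\eta,g)$: the condition $d\eta=2\Phi_3$ means $(\f_3,\xi,\eta,g)$ is a contact metric structure, and it suffices to show $N_{\f_3}=0$. Applying Lemma \ref{Lemma:CM-D} with $i=3$ (so $(j,k)=(1,2)$), all four terms on the right-hand side involve $d\Phi_1$ or $d\Phi_2$, both of which vanish by hypothesis; hence $g(N_{\f_3}(X,Y),\f_1 Z)=0$ for all $X,Y,Z$, and likewise with $\f_2$ by the even permutation $(3,2,1)$ — wait, one must be careful with permutation parity, so I would instead test $N_{\f_3}$ against the frame $\{\f_1 Z,\f_2 Z,\f_3 Z,\xi\}$. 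The horizontal components vanish because the relevant $d\Phi_j$'s are zero, and the $\xi$-component is controlled by $\eta(N_{\f_3}(X,Y))=-d\eta(\f_3 X,\f_3 Y)+d\eta(X,Y)$ together with $\f_3$-invariance of $d\eta=2\Phi_3$ (which holds since $\Phi_3$ is the fundamental form). This yields $N_{\f_3}=0$, so $(\f_3,\xi,\eta,g)$ is Sasakian.

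For the anti-quasi-Sasakian structures $(\f_1,\xi,\eta,g)$ and $(\f_2,\xi,\eta,g)$: by Definition \ref{Def:aqS} I need $d\Phi_1=0$ (resp. $d\Phi_2=0$), which is assumed, and the anti-normality $N_{\f_1}=2d\eta\otimes\xi$. I would again use Lemma \ref{Lemma:CM-D}, now with $i=1$, to compute the horizontal components of $N_{\f_1}$: testing against $\f_2 Z$ gives terms in $d\Phi_2$ and $d\Phi_3$, all zero; testing against $\f_3 Z$ gives terms in $d\Phi_3$ and $d\Phi_2$, again zero; testing against $\f_1 Z$ gives terms in $d\Phi_1$, zero. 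So $N_{\f_1}$ is purely vertical. The vertical component is $\eta(N_{\f_1}(X,Y))\xi=(-d\eta(\f_1 X,\f_1 Y)+d\eta(X,Y))\xi$, and since $d\eta=2\Phi_3$ while $\f_1$ \emph{anti}-commutes with $\f_3$ (from $\f_1\f_3=-\f_3\f_1$, a consequence of the quaternionic identities), one checks $\Phi_3(\f_1 X,\f_1 Y)=g(\f_1 X,\f_3\f_1 Y)=-g(\f_1 X,\f_1\f_3 Y)=-g(X,\f_3 Y)=-\Phi_3(X,Y)$, i.e. $d\eta$ is $\f_1$-anti-invariant. Hence $\eta(N_{\f_1}(X,Y))=2d\eta(X,Y)$ and $N_{\f_1}=2d\eta\otimes\xi$, which is precisely anti-normality. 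The same argument with $\f_2$ (using $\f_2\f_3=-\f_3\f_2$) handles the second structure.

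Finally, for the concluding claim that $M$ locally fibers onto a hyperk\"ahler manifold: each $\f_i$ is projectable along $\xi$ (this follows from $\Lie_\xi\f_i=0$, which holds for each by Proposition \ref{prop-deta-normal}(iii) in the anti-normal cases and by the analogous Sasakian fact for $\f_3$), descending to compatible almost complex structures $J_1,J_2,J_3$ on the local leaf space satisfying the quaternionic identities, hence an almost hyperhermitian structure. Since $d\Phi_1=d\Phi_2=d\Phi_3=0$ upstairs and the $\Phi_i$ are basic (horizontal and $\Lie_\xi$-invariant), they descend to three closed fundamental $2$-forms on the base; by the Hitchin Lemma the induced structure is hyperk\"ahler. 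The main obstacle I anticipate is bookkeeping the permutation parities and the anticommutation signs in Lemma \ref{Lemma:CM-D} correctly — getting the sign of the $\f_i$-(anti-)invariance of $d\eta$ right is what distinguishes the Sasakian case from the aqS cases, and a sign error there would wrongly swap normality with anti-normality.
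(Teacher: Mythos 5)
Your proposal is correct and follows essentially the same route as the paper's proof: closedness of all three fundamental forms (with $d\Phi_3=0$ from $d\eta=2\Phi_3$), Lemma \ref{Lemma:CM-D} to kill the horizontal part of each $N_{\f_i}$, the $\f_3$-invariance versus $\f_1,\f_2$-anti-invariance of $d\eta$ (via the quaternionic identities and Remark \ref{Rmk:aqS_bis}) to identify the vertical parts, and projection plus Hitchin's Lemma for the hyperk\"ahler conclusion. Your parity worry is harmless: for each $i$ it suffices to test against $\f_jZ$ for the single even permutation $(i,j,k)$, since $\f_j$ restricted to $\D$ is an automorphism, so the extra tests (including the one against $\f_1Z$ for $N_{\f_1}$, which the lemma does not literally cover) are redundant.
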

\begin{proof}
	Since the fundamental 2-forms are closed,
	Lemma \ref{Lemma:CM-D} implies $g(N_{\f_i}(X,Y),\f_jZ)=0$ for every $X,Y,Z\in\X(M)$, that is $N_{\f_i}(X,Y)_\D=0$ for $i=1,2,3$.
	Now observe that the fundamental 2-form $\Phi_3$ is $\f_3$-invariant, while for $i=1,2$, it is $\f_i$-anti-invariant. Indeed,
	$$\Phi_3(\f_iX,\f_iY)=g(\f_iX,\f_3\f_iY)=-g(\f_iX,\f_i\f_3Y)
	=-g(X,\f_3Y)=-\Phi_3(X,Y).$$
	Since $d\eta=2\Phi_3$, as argued in Remark \ref{Rmk:aqS_bis},
	$$\eta(N_{\f_3}(X,Y))=0,\qquad \eta(N_{\f_i}(X,Y))=2d\eta(X,Y).$$ 	
	Therefore, the structure $(\f_3,\xi,\eta,g)$ is normal and hence Sasakian, while $(\f_i,\xi,\eta,g)$, $i=1,2$, are anti-normal and hence anti-quasi-Sasakian.
	
	Now, considering the local submersion $\pi:M\to M/\xi$, by Proposition \ref{prop-deta-normal} and the fact that the third structure is Sasakian, all the structure tensor fields  are projectable onto an almost hyperhermitian structure $(J_1,J_2,J_3,k)$.
	Denoting by $\Omega_i$ the fundamental 2-form of $(J_i,k)$, one has that $\pi^*\Omega_i=\Phi_i$. Therefore $d\Omega_i=0$ for every $i=1,2,3$ and then $(J_i,k)$ is a hyperk\"ahler structure by the Hitchin's Lemma.
\end{proof}
\smallskip

\begin{definition}\label{Def:double_aqS}
	An $Sp(n)$-almost contact metric structure $(\f_i,\xi,\eta,g)$ satisfying \eqref{eq:hypothesis} will be called a \textit{double aqS-Sasakian structure}.
\end{definition}
\smallskip

Notice that for a double aqS-Sasakian manifold the two anti-quasi-Sasakian structures $(\f_1,\xi,\eta,g)$ and $(\f_2,\xi,\eta,g)$ project onto K\"ahler structures for which the $(2,0)$-form $\omega$ claimed in Theorem \ref{Thm:local submersion} coincides with $\Omega_3$.
The following proposition characterizes double aqS-Sasakian manifolds in terms of the Levi-Civita connection.

\begin{proposition}\label{Prop:char.Sp(n)-str.}
	An $Sp(n)$-almost contact metric manifold $(M,\f_i,\xi,\eta,g)$  is double aqS-Sasakian if and only if any two of the following conditions hold:
	\begin{enumerate}
		\item[(i)] $(\nabla_X\f_1)Y=-2\eta(X)\f_2Y-\eta(Y)\f_2X-g(X,\f_2Y)\xi;$
		\item[(ii)] $(\nabla_X\f_2)Y=2\eta(X)\f_1Y+\eta(Y)\f_1X+g(X,\f_1Y)\xi;$
		\item[(iii)] $(\nabla_X\f_3)Y=g(X,Y)\xi-\eta(Y)X$.
	\end{enumerate}
In particular, each one of the above three conditions is consequence of the other two.
\end{proposition}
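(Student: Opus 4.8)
The plan is to connect the three covariant-derivative formulas (i), (ii), (iii) with the three conditions in \eqref{eq:hypothesis}, namely $d\Phi_1=0$, $d\Phi_2=0$, $d\eta=2\Phi_3$, and then exploit the cyclic symmetry among the indices. First I would observe that each of (i), (ii), (iii) is precisely the characterizing identity for the corresponding structure: by Theorem \ref{Thm:char.aqS}, condition (i) says exactly that $(\f_1,\xi,\eta,g)$ is anti-quasi-Sasakian with associated tensor $A_1=\f_2$ (since $A_1\f_1=-\f_1A_1$ follows from the quaternionic identities, and $A_1=-\f_1\nabla\xi$ is forced), and similarly (ii) says $(\f_2,\xi,\eta,g)$ is aqS with $A_2=-\f_1$; condition (iii) is the standard characterization of the Sasakian structure $(\f_3,\xi,\eta,g)$ recalled in the preliminaries. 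Thus (i)$\wedge$(ii)$\wedge$(iii) is equivalent to the conclusion of Theorem \ref{Thm:Sp(n)-struct.}, and the substantive content of the present proposition is that \emph{any two} of the three already force the third.

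The engine driving this is that all three structures are built from a single Levi-Civita connection and share the same $\nabla\xi$. The key computation is to relate $\nabla\f_i$ to $\nabla\f_j$ and $\nabla\f_k$ by differentiating the quaternionic identities \eqref{eq:quaternionic id.}. Concretely, from $\f_3=\f_1\f_2$ one gets, for every even permutation $(i,j,k)$,
\begin{equation*}
	(\nabla_X\f_k)Y=(\nabla_X\f_i)\f_jY+\f_i(\nabla_X\f_j)Y,
\end{equation*}
and the analogous relations obtained by cyclic permutation. I would substitute the assumed two formulas into the appropriate such identity and check that the third formula drops out. For instance, assuming (i) and (ii), I compute $(\nabla_X\f_3)Y=(\nabla_X\f_1)\f_2Y+\f_1(\nabla_X\f_2)Y$: plugging in the right-hand sides of (i) and (ii), using $\f_1\f_2=\f_3$, $\f_2\f_2=-I+\eta\otimes\xi$, $\f_i\xi=0$, $\eta\circ\f_i=0$, and $g(X,\f_2\f_2 Y)=-g(X,Y)+\eta(X)\eta(Y)$, the horizontal terms should collapse to $g(X,Y)\xi-\eta(Y)X$, giving (iii). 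The same differentiated identity, rearranged, recovers (i) from (ii) and (iii), and (ii) from (i) and (iii).

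The main obstacle I anticipate is purely bookkeeping: the right-hand sides of (i)--(iii) each carry a term with coefficient $2$ on $\eta(X)$, a term on $\eta(Y)$, and a term along $\xi$, and one must verify that when these are pushed through $\f_i$ and combined via the quaternionic relations, the various $\eta(X)$, $\eta(Y)$ and $\langle\cdot,\cdot\rangle\xi$ contributions cancel in exactly the right way — in particular the factor-of-two terms must reassemble correctly, since they encode the $\Lie_\xi\f_i$ data and the distinction between the two aqS structures and the Sasakian one. I would organize the verification by splitting each test vector into its horizontal and vertical parts, treating $X,Y\in\Gamma(\D)$ first (where the computation is cleanest and uses only $\f_i\f_j=\f_k$ and $\f_i^2=-I$ on $\D$), and then handling the cases $X=\xi$ or $Y=\xi$ separately using $\f_i\xi=0$ and $\eta\circ\f_i=0$. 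Once all three cyclic versions of the differentiated quaternionic identity have been written down, each of the three implications ``two imply the third'' is a direct substitution, so no genuinely new idea is required beyond the derivative-of-product relation; the care lies entirely in matching coefficients.
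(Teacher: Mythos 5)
Your proposal is correct and takes essentially the same route as the paper: conditions (i)--(iii) are read through Theorem \ref{Thm:char.aqS} and the standard Sasakian characterization, and the missing third identity is recovered by differentiating a quaternionic relation, which is exactly how the paper handles the cases involving (iii) (it writes $\nabla\f_2=\nabla\f_3\circ\f_1+\f_3\circ\nabla\f_1$ to get (ii) from (i) and (iii)). The only divergence is in the case (i)$\wedge$(ii), where the paper is slightly shorter: instead of deriving (iii) by the Leibniz rule, it sets $Y=\xi$ in (i) to obtain $\nabla\xi=-\f_3$, hence $d\eta=2\Phi_3$, so that \eqref{eq:hypothesis} follows directly; your computation of $(\nabla_X\f_3)Y$ from $\f_3=\f_1\f_2$ works just as well and does collapse to $g(X,Y)\xi-\eta(Y)X$. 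One small slip to fix: comparing (i) and (ii) with \eqref{eq:aqS.char} gives $A_{\f_1}=-\f_2$ and $A_{\f_2}=\f_1$, not $A_{\f_1}=\f_2$ and $A_{\f_2}=-\f_1$ as you wrote; this is harmless for your argument, since in either case the tensor is skew-symmetric and anticommutes with the relevant $\f_i$, which is all that Theorem \ref{Thm:char.aqS} requires.
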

\begin{proof}
	If $(M,\f_i,\xi,\eta,g)$ is a double aqS-Sasakian manifold, Theorem \ref{Thm:Sp(n)-struct.} holds. In particular, since $(\f_3,\xi,\eta,g)$ is a Sasakian structure, it satisfies (iii) and $\nabla\xi=-\f_3$. Then, (i) and (ii) follow from Theorem \ref{Thm:char.aqS}, where the operators $A_{\f_1}$ and $A_{\f_2}$ associated to the two anti-quasi-Sasakian structures are given by $A_{\f_1}=-\f_1\circ\nabla\xi=-\f_2$ and $A_{\f_2}=-\f_2\circ\nabla\xi=\f_1$.
	
	Conversely, first assume that (i) and (ii) hold. Since $\f_1$ and $\f_2$ are skew-symmetric and anticommute each other, by Theorem \ref{Thm:char.aqS} both the structures $(\f_i,\xi,\eta,g)$, $i=1,2$, are anti-quasi-Sasakian and thus $d\Phi_1=d\Phi_2=0$. Applying (i) or (ii) for $Y=\xi$, one has that $\nabla_X\xi=-\f_3X$, and then
	$$d\eta(X,Y)=g(\nabla_X\xi,Y)-g(X,\nabla_Y\xi)=2g(X,\f_3Y)=2\Phi_3(X,Y),$$
	which proves \eqref{eq:hypothesis}.
	Now assume that (i) and (iii) hold. As above $(\f_1,\xi,\eta,g)$ is anti-quasi-Sasakian and hence $d\Phi_1=0$. On the other hand (iii) means that  $(\f_3,\xi,\eta,g)$ is a Sasakian structure, and hence $d\eta=2\Phi_3$. Moreover, since $\f_2=\f_3\f_1$, one has that $\nabla\f_2=\nabla\f_3\circ\f_1+\f_3\circ\nabla\f_1$ and thus (ii) follows form (i) and (iii). Therefore $d\Phi_2=0$ as before.
	Finally, assuming (ii) and (iii), one gets the conclusion arguing analogously.
\end{proof}
\medskip

We conclude the section treating the $5$-dimensional case, namely $n=1$, for which $Sp(1)=SU(2)$. In \cite{Conti-Salamon}, $SU(2)$-structures on $5$-dimensional manifolds are described in terms of a quadruplet $(\eta,\omega_1,\omega_2,\omega_3)$, where $\eta$ is a 1-form and $\omega_i$, for $i=1,2,3$, are $2$-forms satisfying
	\begin{equation}\label{eq:SU(2)-structure}
		\omega_i\wedge\omega_j=\delta_{ij}v,
	\end{equation}
	for some $4$-form $v$ such that $v\wedge\eta\neq0$, and $\omega_1(X,\cdot)=\omega_2(Y,\cdot)\ \Rightarrow\ \omega_3(X,Y)\ge0.$
	These $2$-forms are related to the underlying almost contact metric structures $(\f_i,\xi,\eta,g)$ by $\omega_i=-\Phi_i$ (see for instace \cite{CappellettiM.-Dileo}). Therefore, equations in \eqref{eq:hypothesis} defining a double aqS-Sasakian structure can be rephrased as
	\begin{equation}\label{eq:SU(2)}
		d\omega_1=0,\quad d\omega_2=0,\quad d\eta=-2\omega_3.
	\end{equation}
	We remark that the class of double aqS-Sasakian manifolds coincides with the class of contact Calabi-Yau $5$-manifolds defined as in \cite{TV}.
	Indeed, assuming \eqref{eq:SU(2)}, Theorem \ref{Thm:Sp(n)-struct.} ensures that $(\f_3,\xi,\eta,g)$ is Sasakian. Furthermore, $\epsilon=\omega_1+i\omega_2$ is a nowhere vanishing basic complex $(2,0)$-form on $\D$, satisfying $d\epsilon=0$ and $\epsilon\wedge\bar\epsilon=\frac12(d\eta)^2$. Conversely, if $(M^5,\f,\xi,\eta,g,\epsilon)$ is contact Calabi-Yau, a double aqS-Sasakian structure is defined by $\omega_1=\Re(\epsilon)$, $\omega_2=\Im(\epsilon)$ and $\omega_3=-\frac12d\eta$.
	
	Finally, as observed in \cite{dAFFU}, these are a special type of $K$-contact hypo $SU(2)$-structures, i.e. $\xi$ is a Killing vector field and
	\begin{equation}\label{eq:contact-hypo}
		d(\eta\wedge\omega_1)=0,\quad d(\eta\wedge\omega_2)=0,\quad d\eta=-2\omega_3.	
	\end{equation}
	Indeed by \eqref{eq:SU(2)} and \eqref{eq:SU(2)-structure}, for $i=1,2$ $$d(\eta\wedge\omega_i)=d\eta\wedge\omega_i=-2\omega_3\wedge\omega_i=2\delta_{3i}v=0.$$
	The converse is not true, although a $K$-contact hypo $5$-manifold still carries a Sasakian structure as stated in the following:
	
\begin{proposition}
	Let $(M,\eta,\omega_1,\omega_2,\omega_3)$ be a $5$-dimensional manifold with a $K$-contact hypo $SU(2)$-structure. Let $(\f_i,\xi,\eta,g)$, $i=1,2,3$, be the underlying  almost contact metric structures. Then $(\f_3,\xi,\eta,g)$ is Sasakian and $(\f_i,\xi,\eta,g)$, $i=1,2$, are generalized-quasi-Sasakian structures of class $\calC_{10}\oplus\calC_{11}$.
	Moreover, $M$ is a double aqS-Sasakian manifold (or equivalently the structure is contact Calabi-Yau) if and only if $\Lie_\xi\f_1$ or $\Lie_\xi\f_2$ vanishes.
\end{proposition}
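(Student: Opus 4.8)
The plan is to first translate the $K$-contact hypo conditions into statements about the fundamental forms $\Phi_i=-\omega_i$ and then feed them into Lemma \ref{Lemma:CM-D}. Since $d\eta=-2\omega_3=2\Phi_3$, the third structure is contact metric and $d\Phi_3=\tfrac12 d(d\eta)=0$ automatically. Expanding $d(\eta\wedge\omega_i)=d\eta\wedge\omega_i-\eta\wedge d\omega_i=-2\omega_3\wedge\omega_i-\eta\wedge d\omega_i$ and using the compatibility relations \eqref{eq:SU(2)-structure} (so that $\omega_3\wedge\omega_i=0$ for $i=1,2$), the two hypo conditions reduce to $\eta\wedge d\omega_i=0$, equivalently $d\Phi_i(X,Y,Z)=0$ for all $X,Y,Z\in\Gamma(\D)$ and $i=1,2$. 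This horizontal vanishing of $d\Phi_1,d\Phi_2$, together with $d\Phi_3=0$, will be the only input I need.

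Next I would prove that $(\f_3,\xi,\eta,g)$ is Sasakian by showing $N_{\f_3}=0$. For horizontal $X,Y$ I apply Lemma \ref{Lemma:CM-D} with $(i,j,k)=(3,1,2)$: every term on the right is a value of $d\Phi_1$ or $d\Phi_2$ on horizontal arguments (note $\f_3$ preserves $\D$), hence vanishes, so $N_{\f_3}(X,Y)_\D=0$; and by \eqref{eq:eta(N_f)} the vertical part equals $-d\eta(\f_3X,\f_3Y)+d\eta(X,Y)$, which vanishes since $d\eta=2\Phi_3$ is $\f_3$-invariant. To control $N_{\f_3}(\xi,\cdot)$ I use \eqref{eq:dPhi(xi,X,Y)}: from $d\Phi_3=0$ and $\Lie_\xi g=0$ it gives $g(X,(\Lie_\xi\f_3)Y)=0$, whence $\Lie_\xi\f_3=0$ and $N_{\f_3}(\xi,\cdot)=0$ by Remark \ref{rmk:N(xi,.)-Lie_f}. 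Thus $N_{\f_3}=0$ and, with $d\eta=2\Phi_3$, $(\f_3,\xi,\eta,g)$ is Sasakian. For $i=1,2$ the same Lemma with $(i,j,k)=(1,2,3)$ and $(2,3,1)$ gives $N_{\f_i}(X,Y)_\D=0$ on $\D$ (again the $d\Phi_j$ terms are horizontal values of $d\Phi_1$ or $d\Phi_2$ while the $d\Phi_k$ term is $d\Phi_3=0$); combined with $d\Phi_i=0$ on $\D$ and $\xi$ Killing this is exactly the generalized-quasi-Sasakian condition (Proposition \ref{Prop:C6+...+C11}). To sharpen it to $\calC_{10}\oplus\calC_{11}$ I compute $N_{\f_i}(X,Y,\xi)=\eta(N_{\f_i}(X,Y))=-d\eta(\f_iX,\f_iY)+d\eta(X,Y)=2d\eta(X,Y)$, using the $\f_i$-anti-invariance of $\Phi_3$ established in the proof of Theorem \ref{Thm:Sp(n)-struct.}; Corollary \ref{Cor:C10+C11} then places $(\f_i,\xi,\eta,g)$ in $\calC_{10}\oplus\calC_{11}$.

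Finally, for the equivalence with double aqS-Sasakian I argue as follows. If $M$ is double aqS-Sasakian, Theorem \ref{Thm:Sp(n)-struct.} makes $(\f_1,\xi,\eta,g)$ and $(\f_2,\xi,\eta,g)$ anti-quasi-Sasakian, hence anti-normal, so $\Lie_\xi\f_1=\Lie_\xi\f_2=0$ by Proposition \ref{prop-deta-normal}(iii). Conversely, assume (say) $\Lie_\xi\f_1=0$. Since $(\f_1,\xi,\eta,g)$ is of class $\calC_{10}\oplus\calC_{11}$, the preceding proposition characterizing aqS structures inside $\calC_{10}\oplus\calC_{11}$ makes it anti-quasi-Sasakian, so $d\Phi_1=0$. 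The key step is to propagate this to the second structure: using $\f_2=\f_3\f_1$ and the Leibniz rule for the Lie derivative of a composition of endomorphisms, $\Lie_\xi\f_2=(\Lie_\xi\f_3)\f_1+\f_3(\Lie_\xi\f_1)=0$, since $\Lie_\xi\f_3=0$ (from the Sasakian step) and $\Lie_\xi\f_1=0$; hence $(\f_2,\xi,\eta,g)$ is anti-quasi-Sasakian and $d\Phi_2=0$. With $d\eta=2\Phi_3$ this is precisely \eqref{eq:hypothesis}, so $M$ is double aqS-Sasakian. I expect the main obstacle to be the Sasakian claim for $\f_3$: because the hypo hypotheses only control $d\Phi_1,d\Phi_2$ on $\D$ and not globally, the argument must be routed entirely through the horizontal terms of Lemma \ref{Lemma:CM-D}, with the $\xi$-directions handled separately via $d\Phi_3=0$ and the $K$-contact property; the propagation $\Lie_\xi\f_1=0\Rightarrow\Lie_\xi\f_2=0$ through $\f_2=\f_3\f_1$ is the crux of the last equivalence.
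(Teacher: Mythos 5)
Your proposal is correct, and for the main body of the statement it follows the paper's own proof step by step: translate the hypo conditions into the horizontal vanishing of $d\omega_1,d\omega_2$ (your route via $\omega_3\wedge\omega_i=\delta_{3i}v$ is a cosmetic variant of the paper's evaluation of $d(\eta\wedge\omega_i)$ on $(\xi,X,Y,Z)$); feed this and $d\Phi_3=0$ into Lemma \ref{Lemma:CM-D} to kill the horizontal part of $N_{\f_i}$; get the vertical part from the $\f_i$-(anti-)invariance of $d\eta=2\Phi_3$ via \eqref{eq:eta(N_f)}; invoke Corollary \ref{Cor:C10+C11} for the class $\calC_{10}\oplus\calC_{11}$; and obtain $\Lie_\xi\f_3=0$, hence normality of $\f_3$, from \eqref{eq:dPhi(xi,X,Y)} together with $\Lie_\xi g=0$.

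The one genuine divergence is in the converse of the final equivalence. The paper, assuming $\Lie_\xi\f_1=0$, upgrades $(\f_1,\xi,\eta,g)$ to anti-quasi-Sasakian exactly as you do, but then computes $A_{\f_1}=-\f_1\circ\nabla\xi=\f_1\f_3=-\f_2$ (using $\nabla\xi=-\f_3$ from the Sasakian structure) and invokes Proposition \ref{Prop:char.Sp(n)-str.}: conditions (i) and (iii) there hold, so (ii) follows and the structure is double aqS-Sasakian. You instead stay entirely at the level of Lie derivatives, propagating via the Leibniz rule $\Lie_\xi\f_2=(\Lie_\xi\f_3)\f_1+\f_3(\Lie_\xi\f_1)=0$ (valid since $\f_2=\f_3\f_1$, with $(3,1,2)$ an even permutation), and then reapply the $\calC_{10}\oplus\calC_{11}$ characterization to the second structure to get $d\Phi_2=0$. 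Both arguments are sound and of comparable length; yours is more elementary in that it never touches the covariant derivatives of the structure tensors, whereas the paper's route has the side benefit of exhibiting the full Levi-Civita description of all three structures (the content of Proposition \ref{Prop:char.Sp(n)-str.}) along the way.
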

\begin{proof}
	If \eqref{eq:contact-hypo} holds, taking $i=1,2$ and $X,Y,Z\in\Gamma(\D)$ one has that
	$$0=d(\eta\wedge\omega_i)(\xi,X,Y,Z)=(d\eta\wedge\omega_i)(\xi,X,Y,Z)-(\eta\wedge d\omega_i)(\xi,X,Y,Z)=-d\omega_i(X,Y,Z),$$
	where in the third equality we used  $\omega_i(\xi,X)=0$ and $d\eta(\xi,X)=0$. Therefore $d\Phi_i(X,Y,Z)=-d\omega_i(X,Y,Z)=0$ for every $X,Y,Z\in\Gamma(\D)$. Being also $d\Phi_3=-d\omega_3=0$, Lemma \ref{Lemma:CM-D} implies $N_{\f_i}(X,Y,Z)=0$ for every $X,Y,Z\in\Gamma(\D)$ and for every $i=1,2,3$.
	Now, arguing as in the proof of Theorem \ref{Thm:Sp(n)-struct.}, since $d\eta=2\Phi_3$ is $\f_3$-invariant and $\f_i$-anti-invariant for $i=1,2$, one has $$N_{\f_1}(X,Y,\xi)=N_{\f_2}(X,Y,\xi)=2d\eta(X,Y),\quad N_{\f_3}(X,Y,\xi)=0.$$
	Being $\xi$ Killing by assumption, this proves that the almost contact metric structures $(\f_1,\xi,\eta,g)$ and $(\f_2,\xi,\eta,g)$ belong to $\calC_{10}\oplus\calC_{11}$ (see Corollary \ref{Cor:C10+C11}).
	Moreover, from equation \eqref{eq:dPhi(xi,X,Y)} and $\Lie_\xi g=0$, we have $\Lie_\xi\f_3=0$ and hence $N_{\f_3}(\xi,\cdot)=0$ (see Remark \ref{rmk:N(xi,.)-Lie_f}). Therefore $(\f_3,\xi,\eta,g)$ is normal and thus Sasakian.
	
	Concerning the last statement, $(M,\f_i,\xi,\eta,g)$, $i=1,2,3$, is a double aqS-Sasakian manifold if and only if $d\Phi_1(\xi,\cdot,\cdot)=d\Phi_2(\xi,\cdot,\cdot)=0$. By equation \eqref{eq:dPhi(xi,X,Y)} this is equivalent to $\Lie_\xi\f_1=\Lie_\xi\f_2=0$. Assuming $\Lie_\xi\f_1$ identically zero, $(\f_1,\xi,\eta,g)$ is aqS with $A_{\f_1}=-\f_1\circ\nabla\xi=\f_1\f_3=-\f_2$; thus, applying Proposition \ref{Prop:char.Sp(n)-str.}, also $(\f_2,\xi,\eta,g)$ is aqS. The same holds assuming $\Lie_\xi\f_2=0$.
\end{proof}

\subsection{Weighted Heisenberg Lie groups and compact nilmanifolds} \label{Ex:w-Heisenberg}
	Let $G$ be a $(4n+1)$-dimensional Lie group with Lie algebra $\mathfrak{g}$, and let 
	$\xi,\tau_r,\tau_{n+r},\tau_{2n+r},\tau_{3n+r}$, $r=1,\dots,n$, be a basis of $\mathfrak{g}$. We consider three left invariant almost contact metric structures $(\f_i,\xi,\eta,g)$, $i=1,2,3$, where $g$ is the Riemannian metric with respect to which the basis is orthonormal, $\eta$ is the dual form of $\xi$, and $\f_i$ is given by
	$$\f_i=\sum_{r=1}^{n}\big(\theta_r\otimes\tau_{in+r}-\theta_{in+r}\otimes\tau_r+\theta_{jn+r}\otimes\tau_{kn+r}-\theta_{kn+r}\otimes\tau_{jn+r}\big),$$
	where $\theta_l$ ($l=1,\dots,4n$) is the dual 1-form of $\tau_l$, and $(i,j,k)$ is an even permutation of $(1,2,3)$. Explicitly, $\f_i(\xi)=0$ and
	$$\begin{array}{llll}
		\f_1\tau_r=\tau_{n+r}&\f_1\tau_{n+r}=-\tau_r&\f_1\tau_{2n+r}=\tau_{3n+r}&\f_1\tau_{3n+r}=-\tau_{2n+r}\\
		\f_2\tau_r=\tau_{2n+r}&\f_2\tau_{n+r}=-\tau_{3n+r}&\f_2\tau_{2n+r}=-\tau_r&\f_2\tau_{3n+r}=\tau_{n+r}\\
		\f_3\tau_r=\tau_{3n+r}&\f_3\tau_{n+r}=\tau_{2n+r}&\f_3\tau_{2n+r}=-\tau_{n+r}&\f_3\tau_{3n+r}=-\tau_r.\\		
	\end{array}$$
	Clearly $\f_1,\f_2,\f_3$ satisfy \eqref{eq:quaternionic id.}, so that $(\f_i,\xi,\eta,g)$ is an $Sp(n)$-almost contact metric structure. The fundamental 2-forms are
	$$\Phi_i=-\sum_{r=1}^{n}(\theta_r\wedge\theta_{in+r}+\theta_{jn+r}\wedge\theta_{kn+r}).$$
	Now assume that the nonvanishing commutators are
	$$[\tau_r,\tau_{3n+r}]=[\tau_{n+r},\tau_{2n+r}]=2\lambda_r\xi,$$
	for some constants $\lambda_1,\dots,\lambda_n\in\R$.
	The 1-forms $\theta_l$ are closed for every $l=1,\dots,4n$ and then $d\Phi_i=0$ for every $i=1,2,3$.
	Furthermore
	$$d\eta=-2\sum_{r=1}^n\lambda_r(\theta_r\wedge\theta_{3n+r}+\theta_{n+r}\wedge\theta_{2n+r}).$$
	Notice that when the weights $\lambda_1,\dots,\lambda_n$ are all equal to 1, then $\mathfrak{g}$ is the real Heisenberg Lie algebra of dimension $4n+1$. Furthermore, $d\eta=2\Phi_3$, and thus  $(G,\f_i,\xi,\eta,g)$ is a double aqS-Sasakian manifold. In particular, $(\f_3,\xi,\eta,g)$ is the standard Sasakian structure on the real Heisenberg Lie algebra.
	
    Now we show that in the general case, $(\f_1,\xi,\eta,g)$ and $(\f_2,\xi,\eta,g)$ are anti-quasi-Sasakian structures, while $(\f_3,\xi,\eta,g) $ is quasi-Sasakian.
	From the expression of the Lie brackets, one immediately has that $$g(N_{\f_i}(X,Y),\theta_l)=0,$$
	for every $X,Y\in\mathfrak{g}$ and $l=1,\dots,4n$. In fact this is coherent with Lemma \ref{Lemma:CM-D} since all the fundamental 2-forms are closed. It remains to compute
	$$g(N_{\f_i}(X,Y),\xi)=-d\eta(\f_iX,\f_i Y)+d\eta(X,Y).$$
	For $i=1$:
	\begin{eqnarray*}
		d\eta(\f_1 X,\f_1 Y)
		&=&{}-2\sum_{r=1}^n\lambda_r[\theta_r(\f_1X)\theta_{3n+r}(\f_1Y)-\theta_{3n+r}(\f_1X)\theta_r(\f_1Y)\\
		&&{}+\theta_{n+r}(\f_1X)\theta_{2n+r}(\f_1Y)-\theta_{2n+r}(\f_1X)\theta_{n+r}(\f_1Y)]\\
		&=&2\sum_{r=1}^n\lambda_r[\theta_{n+r}(X)\theta_{2n+r}(Y)-\theta_{2n+r}(X)\theta_{n+r}(Y)\\
		&&{}+\theta_{r}(X)\theta_{3n+r}(Y)-\theta_{3n+r}(X)\theta_{r}(Y)]\\
		&=&{}-d\eta(X,Y).
	\end{eqnarray*}
Thus we have proved that $N_{\f_1}(X,Y)=2d\eta(X,Y)\xi$. Then $(\f_1,\xi,\eta,g)$ is an anti-quasi-Sasakian structure. Analogously one obtains that $N_{\f_2}=2d\eta\otimes\xi$ and $N_{\f_3}=0$, which prove that $(\f_2,\xi,\eta,g)$ is anti-quasi-Sasakian and $(\f_3,\xi,\eta,g)$ is quasi-Sasakian.\\

Now we determine the operators $A_{\f_1}$ and $A_{\f_2}$ associated to the two anti-quasi-Sasakian structures.
From identity \eqref{eq:g(AX,Z)}, $2g(A_{\f_i}X,Y)=d\eta(X,\f_iY)$ ($i=1,2$), and with similiar computation as above we get:
\begin{eqnarray*}
	&&A_{\f_1}=-\sum_{r=1}^n\lambda_r(\theta_r\otimes\tau_{2n+r}-\theta_{2n+r}\otimes\tau_{r}+\theta_{3n+r}\otimes\tau_{n+r}-\theta_{n+r}\otimes\tau_{3n+r});\\
	&&A_{\f_2}=\sum_{r=1}^n\lambda_r(\theta_r\otimes\tau_{n+r}-\theta_{n+r}\otimes\tau_{r}+\theta_{2n+r}\otimes\tau_{3n+r}-\theta_{3n+r}\otimes\tau_{2n+r}).
\end{eqnarray*}
Explicitly $A_{\f_1}\xi=A_{\f_2}\xi=0$ and
$$\begin{array}{llll}
	A_{\f_1}\tau_r=-\lambda_r\tau_{2n+r}& A_{\f_1}\tau_{n+r}=\lambda_r\tau_{3n+r}& A_{\f_1}\tau_{2n+r}=\lambda_r\tau_{r}& A_{\f_1}\tau_{3n+r}=-\lambda_r\tau_{n+r}\\
	A_{\f_2}\tau_r=\lambda_r\tau_{n+r}& A_{\f_2}\tau_{n+r}=-\lambda_r\tau_{r}& A_{\f_2}\tau_{2n+r}=\lambda_r\tau_{3n+r}& A_{\f_2}\tau_{3n+r}=-\lambda_r\tau_{2n+r}.
\end{array}$$
When the weights are equal to 1 we have that $A_{\f_1}=-\f_2$ and $A_{\f_2}=\f_1$ as in Proposition \ref{Prop:char.Sp(n)-str.}. However, in general $A_{\f_1}$ and $A_{\f_2}$ do not define almost contact structures.
\medskip

The Lie group $G$ is a 2-step nilpotent Lie group and the only eventually non zero structure constants of its Lie algebra are the weights $\lambda_r$. A result due to A. I. Malcev \cite{Malcev} ensures that when all the weights $\lambda_r$ are rational numbers, then $G$ admits a cocompact discrete subgroup $\Gamma$, so that an anti-quasi-Sasakian structure on the compact nilmanifold $G/\Gamma$ is induced.

\subsection{Further examples}
\begin{example}\label{ex:compact-Cortes}
	Given a hyperk\"ahler manifold $(B,J_i,k)$ ($i=1,2,3$), the K\"ahler forms $\Omega_2,\Omega_3$ associated to $J_2,J_3$ respectively, are closed and $J_1$-anti-invariant. Thus, if they define integral cohomology classes, they determine principal circle bundles over $B$ endowed with anti-quasi-Sasakian structures in view of Theorem \ref{Thm:Boothby-Wang2}.
	At least locally, one can always consider an open contractible set $U\subset B$ on which the K\"ahler forms are exact, thus defining the trivial cohomology class which corresponds to the trivial bundle $U\times\mathbb{S}^1$. \\
	In \cite{Cortes} V. Cortés shows the existence of non-flat compact hyperk\"ahler manifolds with integral 2-forms, obtained as products of $m$ copies of a $K3$ surface. In particular, applying Theorem \ref{Thm:Boothby-Wang2} to these manifolds, one obtains examples of compact aqS manifolds, which are not quotients of the weighted Heisenberg Lie group, as it is transversely flat.
\end{example}
\smallskip

\begin{example}\label{Ex:disco}
	Let $(B^{4n},J,k)$ be a K\"ahler manifold and fix a coordinate neighborhood $U$ with respect to which the complex structure $J$ is given by $J\frac{\partial}{\partial x_i}=\frac{\partial}{\partial y_i}$ and $J\frac{\partial}{\partial y_i}=-\frac{\partial}{\partial x_i}$, $i=1,\dots,2n$. On $U$ let us consider the following 2-form  $$\omega=\sum_{i=1}^{p}(dx_i\wedge dx_{n+i}-dy_i\wedge dy_{n+i}),$$
	where $1\le p\le n$. Clearly $\omega$ is a 2-form of type $(2,0)$ and rank $4p$. Moreover it is exact, being $\beta=\sum_{i=1}^{p}(x_idx_{n+i}-y_idy_{n+i})$ a primitive 1-form.
	Thus $\omega$ defines the trivial cohomology class on $U$ and hence the trivial bundle $U\times\mathbb{S}^1$ is endowed with an anti-quasi-Sasakian structure $(\f,\xi,\eta,g)$, where the connection form $\eta$ and the Riemannian metric $g$ are given by $$\eta=dt+\pi^*\beta,\quad g=\pi^*k+\eta\otimes\eta.$$
	We point out that, in order to have a global example, one can apply the above construction to the complex unit disc $D^{2n}\subset\C^{2n}$ endowed with the K\"ahler structure of constant holomorphic sectional curvature $c<0$, or to every Hermitian symmetric space of non-compact type, since these can be realized as bounded symmetric domains in $\C^n$ (see \cite[Vol. II, Chap. XI.9]{KN}).
\end{example}
\smallskip

\begin{example}
	Let $(M^{2n+1},\f,\xi,\eta,g)$ be an anti-quasi-Sasakian manifold and $(M^{2m},J,h)$ be a K\"ahler manifold. Then the product manifold $M^{2n+1}\times M^{2m}$ is an anti-quasi Sasakian manifold with respect to the almost contact metric structure $(\tilde\f,\tilde\xi,\tilde\eta,\tilde{g})$ defined by:
	$$\tilde\f X=(\f X_1,JX_2),\quad \tilde\xi=(\xi,0),\quad \tilde\eta(X)=\eta(X_1),\quad \tilde{g}(X,Y)=g(X_1,Y_1)+h(X_2,Y_2),$$
	where $X=(X_1,X_2),\ Y=(Y_1,Y_2)\in\X(M^{2n+1}\times M^{2m})$. Indeed, denoting by $\Omega$ the K\"ahler form of $M^{2m}$, and by $\Phi,\tilde\Phi$ the fundamental 2-forms of the structures $(\f,\xi,\eta,g)$ and $(\tilde\f,\tilde\xi,\tilde\eta,\tilde{g})$ respectively, one has $\tilde\Phi=\Phi+\Omega,$ and hence $d\tilde\Phi=0$. Moreover the $N_{\tilde\f}$ is given by
	$$N_{\tilde\f}(X,Y)=(N_\f(X_1,Y_1),N_J(X_2,Y_2))=(2d\eta(X_1,Y_1)\xi,0)=2d\tilde\eta(X,Y)\tilde\xi.$$
	Finally we observe that if $M^{2n+1}$ has rank $4p+1$, then $M^{2n+1}\times M^{2m}$ has the same rank.
	
	Considering the weigthed Heisenberg Lie group $G$ described in Section \ref{Ex:w-Heisenberg}, assuming $\lambda_{p+1}=\cdots=\lambda_n=0$ for some $1\le p\le n-1$, then one can easily see that $G=G'\times\R^{4(n-p)}$, where $G'$ is a weighted Heisenberg Lie group with structure of maximal rank and the abelian Lie group $\R^{4(n-p)}$ is actually endowed with a hyperk\"ahler structure.
	
	In Section \ref{Sec:connection} we will provide a sufficient condition for an anti-quasi-Sasakian manifold to be decomposable as Riemannian product of an anti-quasi-Sasakian manifold of maximal rank and a K\"ahler manifold.
\end{example}

\section{Riemannian curvature properties}\label{Sec:curvature}
In this section we will further investigate the geometric structure of an anti-quasi-Sasakian manifold, showing that it carries a triplet of closed 2-forms $(\A,\Phi,\Psi)$, the third one being exact. This allows to provide a characterization of anti-quasi-Sasakian manifolds of constant $\xi$-sectional curvature equal to $1$, and to obtain remarkable properties of the Riemannian Ricci tensor. Then we show that an aqS manifold of constant sectional curvature is flat and cok\"ahler. Other obstructions to the existence of aqS structures are discussed in the compact and homogeneous cases.

\subsection{The triplet of closed 2-forms $(\A,\Phi,\Psi)$}
Let $(M,\f,\xi,\eta,g)$ be an anti-quasi-Sasakian manifold, and consider the operator $A=(\nabla\f)\xi=-\f\circ\nabla\xi$ defined in Theorem \ref{Thm:char.aqS}. We set
\begin{equation}\label{eq:psi}
	\psi:=A\f=-\f A=-\nabla\xi.
\end{equation}
Being $\xi$ Killing, $\psi$ is skew-symmetric with respect to $g$.
Moreover, by the fact that $A$ anticommutes with $\f$ and $A\xi=0$, the following identities hold:
\begin{equation}\label{eq:fpsi=-psif}
	\f\psi=A=-\psi\f,
\end{equation}
\begin{equation}\label{eq:psi A=-Apsi}
	\psi A=-\f A^2=-A\psi.
\end{equation}
From \eqref{eq:g(AX,Z)} we also get:
\begin{equation}\label{eq:g-psi-deta}
	2g(X,\psi Y)=d\eta(X,Y).
\end{equation}

\begin{remark}\label{Rmk:psi^2}
	Note that $$\psi^2=(A\f)(A\f)=-A^2\f^2=-A^2(-I+\eta\otimes\xi)=A^2.$$
	Moreover, since $\psi$ is skew-symmetric, $\psi^2$ is symmetric with respect to $g$, with nonpositive eigenfunctions.
	In particular, if $X\in\Gamma(\D)$ is an eigenvector field of $\psi^2$ with eigenfunction $-\lambda^2<0$, then $X,\f X,\psi X,AX$ are mutually orthogonal eigenvector fields associated to the same eigenfunction. This follows from equations \eqref{eq:psi}, \eqref{eq:fpsi=-psif}, \eqref{eq:psi A=-Apsi} and from the skew-symmetry of $\f$, $A$, $\psi$ and $\psi A$ with respect to $g$.
	Finally, $\operatorname{Ker}\psi^2=\operatorname{Ker}\psi$ is $\f$-invariant and it coincides with $\left<\xi\right>\oplus\mathcal{E}$ by \eqref{eq:g-psi-deta}, where $\mathcal{E}$ is defined as in Proposition \ref{Prop.rank}. In particular $\psi=0$ if and only if the manifold is cok\"ahler.
\end{remark}
\smallskip

\begin{proposition}\label{Prop:Lie_psi/A}
	Let $(M,\f,\xi,\eta,g)$ be an anti-quasi-Sasakian manifold. Then:
	\begin{equation*}
		\Lie_\xi\psi=0,\quad \Lie_\xi A=0.
	\end{equation*}
\end{proposition}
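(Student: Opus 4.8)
The plan is to exploit the fact that both $\psi$ and $A$ are completely determined, via the metric $g$, by data that are already known to be $\Lie_\xi$-invariant: the $2$-form $d\eta$ and the tensor $\f$. Indeed $\Lie_\xi g=0$ ($\xi$ is Killing, Proposition \ref{Prop:aqS-Killing}), $\Lie_\xi d\eta=0$ (Proposition \ref{prop-deta-normal}(ii)) and $\Lie_\xi\f=0$ (Proposition \ref{prop-deta-normal}(iii)). Since $\Lie_\xi$ obeys the Leibniz rule with respect to all the natural tensorial operations, the invariance of $\psi$ and $A$ should follow formally, the only real work being the bookkeeping of the first-order correction terms so that they cancel.

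Concretely, for $\psi$ I would start from \eqref{eq:g-psi-deta}, namely $2g(X,\psi Y)=d\eta(X,Y)$, and apply $\Lie_\xi$ to both sides. Expanding the left-hand side by Leibniz,
\[
\Lie_\xi\big(g(X,\psi Y)\big)=(\Lie_\xi g)(X,\psi Y)+g([\xi,X],\psi Y)+g\big(X,(\Lie_\xi\psi)Y\big)+g\big(X,\psi[\xi,Y]\big),
\]
and the right-hand side by
\[
\Lie_\xi\big(d\eta(X,Y)\big)=(\Lie_\xi d\eta)(X,Y)+d\eta([\xi,X],Y)+d\eta(X,[\xi,Y]).
\]
Using $\Lie_\xi g=0$ and $\Lie_\xi d\eta=0$, and re-expressing the two surviving terms $d\eta([\xi,X],Y)$ and $d\eta(X,[\xi,Y])$ back through \eqref{eq:g-psi-deta}, everything cancels except $2g\big(X,(\Lie_\xi\psi)Y\big)$, so that $g\big(X,(\Lie_\xi\psi)Y\big)=0$ for all $X,Y\in\X(M)$. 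Nondegeneracy of $g$ then yields $\Lie_\xi\psi=0$. For $A$ I would avoid a second computation and simply use $A=\f\psi$ from \eqref{eq:fpsi=-psif} together with the fact that $\Lie_\xi$ is a derivation with respect to the composition of $(1,1)$-tensors,
\[
\Lie_\xi A=\Lie_\xi(\f\psi)=(\Lie_\xi\f)\psi+\f(\Lie_\xi\psi)=0,
\]
both factors being $\Lie_\xi$-invariant; alternatively one can repeat the metric argument verbatim on \eqref{eq:g(AX,Z)}, $2g(AX,Z)=d\eta(X,\f Z)$, now also invoking $\Lie_\xi\f=0$ to handle the argument $\f Z$.

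A conceptually cleaner variant, which I find appealing, replaces the metric computation for $\psi$ by a connection-theoretic one. Since $\psi=-\nabla\xi$ and $\nabla$ is torsion-free, writing $[\xi,Z]=\nabla_\xi Z-\nabla_Z\xi$ one checks directly that $\Lie_\xi\psi=\nabla_\xi\psi$: the two extra terms $-\nabla_{\psi X}\xi$ and $\psi\nabla_X\xi$ equal $+\psi^2X$ and $-\psi^2X$ respectively and cancel. It then remains to see $\nabla_\xi\psi=0$, which is the second-order Killing identity $\nabla_\xi\nabla_X\xi-\nabla_{\nabla_\xi X}\xi=R(\xi,\xi)X$ (up to the chosen sign convention) evaluated with both directions equal to $\xi$, hence zero. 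In either approach there is no genuine obstacle: the single point requiring care is the precise cancellation of the first-order correction terms.
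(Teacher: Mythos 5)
Your main argument is correct and is exactly the paper's proof: the identity $\Lie_\xi\psi=0$ is obtained by applying $\Lie_\xi$ to $2g(X,\psi Y)=d\eta(X,Y)$ and using $\Lie_\xi g=0$, $\Lie_\xi d\eta=0$ together with nondegeneracy of $g$, and then $\Lie_\xi A=0$ follows from $A=\f\psi$, $\Lie_\xi\f=0$ and the Leibniz rule. Your alternative connection-theoretic variant (showing $\Lie_\xi\psi=\nabla_\xi\psi$ and killing the latter via the Killing-field identity $(\nabla_X\psi)Y=R(\xi,X)Y$ at $X=\xi$) is also sound, and in fact reproduces, in reverse order, what the paper does later in Propositions \ref{Prop:nabla_xi} and \ref{Prop:nabla_psi}.
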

\begin{proof}
	The first identity is an immediate consequence of the fact that $\xi$ is a Killing vector field, $\Lie_\xi d\eta=0$ by Proposition \ref{prop-deta-normal}, and equation \eqref{eq:g-psi-deta}. The second identity follows from \eqref{eq:fpsi=-psif} and $\Lie_\xi\f=\Lie_\xi\psi=0$.
\end{proof}
\smallskip

\begin{proposition}\label{Prop:nabla_xi}
	Let $(M,\f,\xi,\eta,g)$ be an anti-quasi-Sasakian manifold. Then:
	\begin{equation}\label{3nabla}
		\nabla_\xi\f=2A,\quad \nabla_\xi\psi=0,\quad \nabla_\xi A=-2\psi A.
	\end{equation}
\end{proposition}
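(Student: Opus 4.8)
The plan is to establish the three identities in the stated order, deriving each from the characterization of aqS manifolds in Theorem \ref{Thm:char.aqS} together with the structural facts already recorded for the operators $A$ and $\psi$.

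First I would prove $\nabla_\xi\f=2A$ by specializing the covariant derivative formula \eqref{eq:aqS.char} to $X=\xi$. Since $\eta(\xi)=1$, $A\xi=0$ and $\eta\circ A=0$ (all three established in the proof of Theorem \ref{Thm:char.aqS}), we have $g(\xi,AY)=\eta(AY)=0$, so the second and third terms on the right-hand side vanish and only $2AY$ survives. This gives $\nabla_\xi\f=2A$ at once.

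Next, for $\nabla_\xi\psi=0$ I would combine $\Lie_\xi\psi=0$ from Proposition \ref{Prop:Lie_psi/A} with the relation $\nabla\xi=-\psi$ coming from \eqref{eq:psi}. The key ingredient is the general identity, valid for any $(1,1)$-tensor field $T$,
\[(\Lie_\xi T)(X)=(\nabla_\xi T)(X)+\psi(TX)-T(\psi X),\]
which I would obtain by writing $\Lie_\xi Y=[\xi,Y]=\nabla_\xi Y-\nabla_Y\xi$, applying it inside $(\Lie_\xi T)(X)=\Lie_\xi(TX)-T(\Lie_\xi X)$, and substituting $\nabla_Y\xi=-\psi Y$. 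Taking $T=\psi$, the two first-order correction terms become $\psi(\psi X)-\psi(\psi X)=0$, so $\Lie_\xi\psi=\nabla_\xi\psi$; the vanishing of the Lie derivative then yields $\nabla_\xi\psi=0$.

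Finally I would obtain $\nabla_\xi A=-2\psi A$ purely algebraically from the first two identities. Using $A=\f\psi$, a direct consequence of \eqref{eq:psi} and \eqref{eq:fpsi=-psif}, the Leibniz rule for the covariant derivative of a composition gives $\nabla_\xi A=(\nabla_\xi\f)\psi+\f(\nabla_\xi\psi)=2A\psi$, where I insert $\nabla_\xi\f=2A$ and $\nabla_\xi\psi=0$. The anticommutation $A\psi=-\psi A$ recorded in \eqref{eq:psi A=-Apsi} then converts this into $\nabla_\xi A=-2\psi A$. The computations are short and each step feeds the next, so there is no serious obstacle; the only point requiring care is the passage from $\Lie_\xi$ to $\nabla_\xi$ in the second identity, where one must use that $\xi$ is Killing (equivalently, that $\nabla\xi=-\psi$ is skew-symmetric) precisely so that the two correction terms cancel.
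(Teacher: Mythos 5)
Your proof is correct and follows essentially the same route as the paper's: the first identity by setting $X=\xi$ in \eqref{eq:aqS.char}, the second by showing $\Lie_\xi\psi=(\nabla_\xi\psi)$ via $\nabla\xi=-\psi$ and invoking Proposition \ref{Prop:Lie_psi/A}, and the third by the Leibniz rule on $A=\f\psi$ together with \eqref{eq:psi A=-Apsi}. The only cosmetic difference is that you package the second step as a general identity for $(1,1)$-tensors (and, minor quibble, the cancellation of the correction terms for $T=\psi$ is purely algebraic once $\nabla\xi=-\psi$ is used; the Killing property enters earlier, in proving $\Lie_\xi\psi=0$), but this does not affect correctness.
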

\begin{proof}
	The first equation is a direct consequence of \eqref{eq:aqS.char} applied for $X=\xi$. For every $X\in\X(M)$ we have:
	\begin{eqnarray*}
		(\Lie_\xi\psi)X&=&[\xi,\psi X]-\psi[\xi,X]\\
		&=&\nabla_\xi\psi X-\nabla_{\psi X}\xi-\psi\nabla_X\xi+\psi\nabla_X\xi\\
		&=&(\nabla_\xi\psi)X+\psi^2X-\psi^2X=(\nabla_\xi\psi)X,
	\end{eqnarray*}
	and hence the second equation follows from Proposition \ref{Prop:Lie_psi/A}. Finally we have:
	$$\nabla_\xi A=(\nabla_\xi\f)\psi+\f\nabla_\xi\psi=2A\psi=-2\psi A.$$
\end{proof}

\noindent Now, let us consider the $2$-forms associated to the skew-symmetric operators $A$ and $\psi$, defined by:
$$\mathcal{A}(X,Y):=g(X,AY),\quad \Psi(X,Y):=g(X,\psi Y)$$
for every $X,Y\in\X(M)$. Equation \eqref{eq:g-psi-deta} immediately implies $d\Psi=0$. We are going to show that $\A$ is closed as well, and to this aim we prove some useful identities for $\nabla\Psi$.
\smallskip

\begin{lemma}\label{Lemma:identita nablaPsi}
	Let $(M,\f,\xi,\eta,g)$ be an anti-quasi-Sasakian manifold. Then, for every $X,Y,Z\in\X(M)$ the following identities hold:
	\begin{align*}
		(\nabla_X\Psi)(\f Y,Z)-(\nabla_X\Psi)(Y,\f Z)&= \eta(Y)g(\psi^2X,\f Z)+\eta(Z)g(\psi^2X,\f Y),\\
		(\nabla_X\Psi)(\f Y,\f Z)+(\nabla_X\Psi)(Y,Z)&= {}-\eta(Y)g(\psi^2X,Z)+\eta(Z)g(\psi^2X,Y),\\
		(\nabla_{\f X}\Psi)(\f Y,Z)+(\nabla_X\Psi)(Y,Z)&= {}-\eta(Y)g(\psi^2X,Z)+2\eta(Z)g(\psi^2X,Y).
	\end{align*}
\end{lemma}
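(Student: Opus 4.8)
The plan is to derive everything from the fundamental identity $\psi=-\nabla\xi$ together with the characterization $(\nabla_X\f)Y=2\eta(X)AY+\eta(Y)AX+g(X,AY)\xi$ from Theorem \ref{Thm:char.aqS}, and the relations $\f\psi=A=-\psi\f$, $\psi=A\f$ from \eqref{eq:psi}--\eqref{eq:fpsi=-psif}. Since $\Psi(X,Y)=g(X,\psi Y)$ and $\nabla g=0$, the covariant derivative is $(\nabla_X\Psi)(Y,Z)=g(Y,(\nabla_X\psi)Z)$, so the whole computation reduces to understanding $\nabla_X\psi=-\nabla_X\nabla\xi$, equivalently $\nabla_X A$ via $\psi=A\f$. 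I would first compute $\nabla_X A$ in general: differentiating $A=\f\psi$ and using the product rule gives $\nabla_X A=(\nabla_X\f)\psi+\f(\nabla_X\psi)$, while differentiating $\psi=A\f$ gives a companion relation; combining these with the explicit formula for $\nabla_X\f$ should express $(\nabla_X\psi)Z$ in terms of $\eta$, $A$, $\psi$ and $\psi^2=A^2$.

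The key observation driving the right-hand sides is that the three stated identities are really statements about how $\nabla\psi$ fails to be $\f$-invariant or $\f$-anti-invariant, and the failure is governed entirely by $\psi^2$ acting through the Reeb directions $\eta(Y)$, $\eta(Z)$. Concretely, I would rewrite each left-hand side using $(\nabla_X\Psi)(\f Y,Z)=g(\f Y,(\nabla_X\psi)Z)=-g(Y,\f(\nabla_X\psi)Z)+\eta(Y)\eta(\f(\nabla_X\psi)Z)$ and similarly for the $\f Z$ slot, thereby converting the problem into controlling the commutator/anticommutator of $\f$ with $\nabla_X\psi$. The central algebraic input will be that $\nabla_X\psi$ decomposes into a part that genuinely anticommutes with $\f$ (coming from the transverse K\"ahler geometry, which contributes nothing to these particular combinations once the horizontal terms cancel) plus correction terms proportional to $\eta$ that feed back $\psi^2X=A^2X$ exactly as written. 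I expect the identity $\psi^2=A^2$ from Remark \ref{Rmk:psi^2} and the skew-symmetry of $\f,A,\psi,\psi A$ to be used repeatedly to simplify inner products.

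Once $(\nabla_X\psi)Z$ is in hand, each of the three identities is a matter of substituting, expanding $\f Y$, $\f Z$, $\f Y,\f Z$ in the appropriate slots, and collecting terms; the horizontal (non-$\eta$) contributions must cancel in pairs by the anticommutation $A\f=-\f A$ and the closedness $d\Psi=0$, leaving only the stated $\eta(Y)$- and $\eta(Z)$-terms with $\psi^2X$. The third identity, involving $\nabla_{\f X}\psi$ rather than $\nabla_X\psi$, will additionally require the relation between $\nabla_{\f X}$ and $\nabla_X$ applied to $\psi$, for which I would use $\nabla_\xi$-type formulas from Proposition \ref{Prop:nabla_xi} together with the $\f$-anti-invariance of $A$; this is where the asymmetric coefficient $2\eta(Z)$ (versus $\eta(Z)$ in the second identity) emerges, and it is the step I expect to be the main obstacle, since it demands carefully tracking how differentiating along $\f X$ swaps the roles of $A$ and $\psi$ via $\nabla_{\f X}\xi=-\psi\f X=-AX$.
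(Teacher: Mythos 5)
Your plan for the first two identities coincides with the paper's own proof: differentiating $A=\f\psi$ and $A=-\psi\f$, inserting the formula \eqref{eq:aqS.char} for $\nabla\f$, and using $\psi A=-A\psi$ yields
\[
(\nabla_X\psi)\f Y+\f(\nabla_X\psi)Y=\eta(Y)\f\psi^2X-g(\psi^2X,\f Y)\xi,
\]
which is precisely the first identity, and the second follows by replacing $Z$ with $\f Z$ and using $(\nabla_X\psi)\xi=\psi^2X$. One caveat: your claim that this procedure "expresses $(\nabla_X\psi)Z$ in terms of $\eta$, $A$, $\psi$ and $\psi^2$" overreaches. Equating the two expansions of $\nabla_XA$ determines only the anticommutator of $\f$ with $\nabla_X\psi$; the $\f$-anticommuting part of $\nabla_X\psi$ along $\D$ is genuine curvature data --- by Proposition \ref{Prop:nabla_psi}, $(\nabla_X\psi)Y=R(\xi,X)Y$ --- and is not an algebraic function of the structure tensors (it vanishes transversely for the weighted Heisenberg groups but not, say, in Example \ref{example-disc}). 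Your later, more careful formulation (the undetermined part anticommutes with $\f$ and drops out of the combinations appearing in the first two identities) is the correct statement, and it suffices for those two identities.

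The genuine gap is the third identity. There you must compare derivatives in the two directions $\f X$ and $X$, and the undetermined $\f$-anticommuting parts of $\nabla_{\f X}\psi$ and $\nabla_X\psi$ do not cancel for algebraic reasons: no pointwise identity relates them, so the tools you propose --- the $\nabla_\xi$-formulas of Proposition \ref{Prop:nabla_xi} and the relation $\nabla_{\f X}\xi=-\psi\f X$ (which, incidentally, equals $+AX$, not $-AX$, since $\psi\f=-A$) --- cannot produce it. The missing idea, which is the actual engine of the paper's proof, is the closedness of $\Psi$, available for free because $2\Psi=d\eta$ by \eqref{eq:g-psi-deta}: one expands
\[
0=d\Psi(X,Y,Z)+d\Psi(\f X,\f Y,Z)+d\Psi(\f X,Y,\f Z)-d\Psi(X,\f Y,\f Z)
\]
into twelve covariant-derivative terms, simplifies using the first two identities, and then adds and subtracts $(\nabla_{\f X}\Psi)(\f Y,Z)+(\nabla_X\Psi)(Y,Z)$ and applies the first two identities once more; it is exactly this step that produces the asymmetric coefficient $2\eta(Z)$. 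You do mention $d\Psi=0$, but only as a vague cancellation mechanism for "horizontal terms", not as the source of the relation between $\nabla_{\f X}\Psi$ and $\nabla_X\Psi$; as written, your plan for the third identity would stall.
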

\begin{proof}
	Recall that $A=\f\psi=-\psi\f$ so that, applying Theorem \ref{Thm:char.aqS} and using $\eta\circ\psi=0$ and $\psi\xi=0$, one obtains:
	\begin{eqnarray}\label{eq:nablaA}
		(\nabla_XA)Y&=&(\nabla_X\f\psi)Y\nonumber\\
		&=&(\nabla_X\f)\psi Y+\f(\nabla_X\psi)Y\nonumber\\
		&=&2\eta(X)A\psi Y+g(X,A\psi Y)\xi+\f(\nabla_X\psi)Y,
	\end{eqnarray}
	and
	\begin{eqnarray*}
		(\nabla_XA)Y&=&-(\nabla_X\psi\f)Y\\
		&=&-(\nabla_X\psi)\f Y-\psi(\nabla_X\f)Y\\
		&=&-(\nabla_X\psi)\f Y-2\eta(X)\psi AY-\eta(Y)\psi AX.
	\end{eqnarray*}
	Therefore, since $\psi$ and $A$ anticommute each other, we get:
	\begin{eqnarray*}
		(\nabla_X\psi)\f Y+\f(\nabla_X\psi)Y&=& \eta(Y)A\psi X-g(X,A\psi Y)\xi\\
		&=&\eta(Y)\f\psi^2X-g(\psi^2X,\f Y)\xi.
	\end{eqnarray*}
	Using $(\nabla_X\Psi)(Y,Z)=-g((\nabla_X\psi)Y,Z)$, the first identity is proved. The second equation follows from the first one just by replacing $Z$ by $\f Z$:
	\begin{eqnarray*}
		(\nabla_X\Psi)(\f Y,\f Z)+(\nabla_X\Psi)(Y,Z)
		&=&\eta(Z)(\nabla_X\Psi)(Y,\xi)-\eta(Y)g(\psi^2X,Z)\\
		&=&\eta(Z)g((\nabla_X\psi)\xi,Y)-\eta(Y)g(\psi^2X,Z)\\
		&=&\eta(Z)g(\psi^2X,Y)-\eta(Y)g(\psi^2X,Z).
	\end{eqnarray*}
	Finally, using $d\Psi=0$ and applying the first two identities, one has:
	\begin{eqnarray*}
		0&=&d\Psi(X,Y,Z)+d\Psi(\f X,\f Y,Z)+d\Psi(\f X,Y,\f Z)-d\Psi(X,\f Y,\f Z)\\
		&=&(\nabla_X\Psi)(Y,Z)+(\nabla_Y\Psi)(Z,X)+(\nabla_Z\Psi)(X,Y)\\
		&&{}+(\nabla_{\f X}\Psi)(\f Y,Z)+(\nabla_{\f Y}\Psi)(Z,\f X)+(\nabla_Z\Psi)(\f X,\f Y)\\
		&&{}+(\nabla_{\f X}\Psi)(Y,\f Z)+(\nabla_Y\Psi)(\f Z,\f X)+(\nabla_{\f Z}\Psi)(\f X,Y)\\
		&&{}-(\nabla_X\Psi)(\f Y,\f Z)-(\nabla_{\f Y}\Psi)(\f Z,X)-(\nabla_{\f Z}\Psi)(X,\f Y)\\
		&=&(\nabla_X\Psi)(Y,Z)+(\nabla_{\f X}\Psi)(\f Y,Z)+(\nabla_{\f X}\Psi)(Y,\f Z)-(\nabla_X\Psi)(\f Y,\f Z)\\
		&&{}-\eta(Z)g(\psi^2Y,X)+\eta(X)g(\psi^2Y,Z)-\eta(X)g(\psi^2Z,Y)+\eta(Y)g(\psi^2Z,X)\\
		&&{}-\eta(Z)g(\psi^2\f Y,\f X)-\eta(X)g(\psi^2\f Y,\f Z)+\eta(X)g(\psi^2\f Z,\f Y)+\eta(Y)g(\psi^2\f Z,\f X)\\
		&=&(\nabla_X\Psi)(Y,Z)+(\nabla_{\f X}\Psi)(\f Y,Z)+(\nabla_{\f X}\Psi)(Y,\f Z)-(\nabla_X\Psi)(\f Y,\f Z)\\
		&&{}-2\eta(Z)g(\psi^2Y,X)+2\eta(Y)g(\psi^2Z,X).
	\end{eqnarray*}
	Then, by adding and subtracting $(\nabla_{\f X}\Psi)(\f Y,Z)+(\nabla_{\f X}\Psi)(Y,Z)$, and applying again the first two identities, one has:
	\begin{eqnarray*}
		\lefteqn{2\{(\nabla_X\Psi)(Y,Z)+(\nabla_{\f X}\Psi)(\f Y,Z)\}}\\
		&=&(\nabla_{\f X}\Psi)(\f Y,Z)-(\nabla_{\f X}\Psi)(Y,\f Z)+(\nabla_X\Psi)(\f Y,\f Z)+(\nabla_X\Psi)(Y,Z)\\
		&&{}+2\eta(Z)g(X,\psi^2 Y)-2\eta(Y)g(X,\psi^2Z)\\
		&=&\eta(Y)g(\psi^2\f X,\f Z)+\eta(Z)g(\psi^2\f X,\f Y) -\eta(Y)g(\psi^2X,Z)+\eta(Z)g(\psi^2X,Y)\\
		&&{}+2\eta(Z)g(\psi^2X,Y)-2\eta(Y)g(\psi^2X,Z)\\
		&=&4\eta(Z)g(\psi^2X,Y)-2\eta(Y)g(\psi^2X,Z),
	\end{eqnarray*}
	and this concludes the proof of the third equation.
\end{proof}
\medskip

\begin{proposition}\label{Prop:dA=0}
	For every anti-quasi-Sasakian manifold $(M,\f,\xi,\eta,g)$ the $2$-form $\A$ is closed.
\end{proposition}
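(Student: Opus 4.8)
The plan is to compute $d\A$ from its expression through the Levi--Civita connection, $d\A(X,Y,Z)=\mathfrak{S}_{X,Y,Z}(\nabla_X\A)(Y,Z)$, and to reduce everything to the covariant derivative $\nabla\Psi$, which is already controlled by Lemma \ref{Lemma:identita nablaPsi}. The starting observation is that, since $A=\f\psi=-\psi\f$, the $2$-form $\A$ is nothing but $\Psi$ twisted by $\f$:
\[\A(X,Y)=g(X,\f\psi Y)=-\Psi(\f X,Y)=-\Psi(X,\f Y).\]
Differentiating this identity and inserting the formula \eqref{eq:aqS.char} for $\nabla\f$ (together with $A\psi=\f\psi^2$, $\psi\xi=0$ and $\eta\circ\psi=0$), I would obtain
\[(\nabla_X\A)(Y,Z)=-(\nabla_X\Psi)(\f Y,Z)+2\eta(X)B(Y,Z)+\eta(Y)B(X,Z),\]
where $B(Y,Z):=g(Y,\f\psi^2 Z)$ is a $2$-form, since $\f\psi^2=\psi^2\f$ is skew-symmetric. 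Taking the cyclic sum and using the skew-symmetry of $B$ to simplify the algebraic part, the $\nabla\Psi$-terms collapse into a single cyclic sum and I am left with
\[d\A(X,Y,Z)=-S+E,\qquad S:=\mathfrak{S}_{X,Y,Z}(\nabla_X\Psi)(\f Y,Z),\quad E:=\mathfrak{S}_{X,Y,Z}\eta(X)B(Y,Z),\]
so that the whole proposition reduces to the single identity $S=E$.

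The core of the argument is establishing $S=E$, and here the three identities of Lemma \ref{Lemma:identita nablaPsi}, together with $d\Psi=0$ (from \eqref{eq:g-psi-deta}) and $\nabla_\xi\Psi=0$ (Proposition \ref{Prop:nabla_xi}), come into play. I would proceed in three moves. First, cyclically summing the first identity of the Lemma, its $\psi^2$-remainder cancels by skew-symmetry, so that $S$ coincides with the companion sum $\mathfrak{S}_{X,Y,Z}(\nabla_X\Psi)(Y,\f Z)$; combined with $d\Psi=0$ and the skew-symmetry of $\nabla\Psi$, this yields $S'=-S$ for the reversed sum $S':=\mathfrak{S}_{X,Y,Z}(\nabla_X\Psi)(\f Z,Y)$. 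Second, replacing $X$ by $\f X$ in the third identity of the Lemma and using $\f^2=-I+\eta\otimes\xi$ and $\nabla_\xi\Psi=0$, I get a pointwise relation whose cyclic sum reads $-S+S''=-3E$, where $S'':=\mathfrak{S}_{X,Y,Z}(\nabla_{\f X}\Psi)(Y,Z)$. Third, expanding $S''$ by means of $d\Psi(\f X,Y,Z)=0$ expresses it through $S$ and $S'$ as $S''=S'-S=-2S$. Substituting back gives $-S-2S=-3E$, hence $S=E$ and therefore $d\A=0$.

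The delicate point, and the main source of error, is the bookkeeping of the several $\f$-twisted cyclic sums $S$, $S'$, $S''$ and the signs produced by the skew-symmetry of $\Psi$ when permuting arguments. The three identities of Lemma \ref{Lemma:identita nablaPsi} are precisely engineered so that all the remainders of the form $\eta(\cdot)\,g(\psi^2\cdot,\cdot)$ either cancel among themselves in the cyclic sum or recombine into $E$; everything else is routine tensor algebra. An alternative, more conceptual route would be to pass to the local Kähler quotient of Theorem \ref{Thm:local submersion}, where $\Psi$ and $\A$ descend to the real and imaginary parts of the closed $(2,0)$-form $\omega$, whence both are automatically closed; I would keep the direct computation, however, as it avoids regularity assumptions and stays intrinsic on $M$.
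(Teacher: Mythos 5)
Your proof is correct and follows essentially the same route as the paper's: both reduce $d\A$ to statements about $\nabla\Psi$ via $A=\f\psi$ and the aqS formula \eqref{eq:aqS.char} for $\nabla\f$, and then close the argument using $d\Psi=0$ together with the first and third identities of Lemma \ref{Lemma:identita nablaPsi}. The only difference is organizational: the paper first disposes of the $\xi$-direction separately (via $\Lie_\xi A=0$ and $\xi$ Killing) and then computes $d\A$ on horizontal fields, where all $\eta$-weighted remainders vanish, whereas you run a single global cyclic-sum computation and verify that the remainders recombine into the term $E$ --- which, as your bookkeeping correctly shows, they do.
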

\begin{proof}
	By using $\A(\xi,\cdot)=0$, $\xi$ Killing and $\Lie_\xi A=0$, for every $X,Y\in\X(M)$ we have:
	\begin{eqnarray*}
		d\A(\xi,X,Y)&=&\xi(\A(X,Y))+X(\A(Y,\xi))+Y(\A(\xi,X))\\
		&&-\A([\xi,X],Y)-\A([X,Y],\xi)-\A([Y,\xi],X)\\
		&=&\xi(g(X,AY))-g([\xi,X],AY)-g([Y,\xi],AX)\\
		&=&g(X,[\xi,AY])-g(A[\xi,Y],X)\\
		&=&g(X,(\Lie_\xi A)Y)=0.
	\end{eqnarray*}
	Now, for every $X,Y,Z\in\Gamma(\D)$, equation \eqref{eq:nablaA} gives that $g((\nabla_XA)Y,Z)=(\nabla_X\Psi)(Y,\f Z)$. Thus we have:
	\begin{eqnarray*}
		d\A(X,Y,Z)
		&=&-g((\nabla_XA)Y,Z)-g((\nabla_YA)Z,X)-g((\nabla_ZA)X,Y)\\
		&=&-(\nabla_X\Psi)(Y,\f Z)-(\nabla_Y\Psi)(Z,\f X) -(\nabla_Z\Psi)(X,\f Y)
	\end{eqnarray*}
	Since $d\Psi(X,Y,\f Z)=0$, we get
	$$-(\nabla_X\Psi)(Y,\f Z)=(\nabla_Y\Psi)(\f Z,X)+(\nabla_{\f Z}\Psi)(X,Y).$$
	Substituting in the previous equation and applying the first and the third identities in Lemma \ref{Lemma:identita nablaPsi}, we obtain:
	\begin{eqnarray*}
		d\A(X,Y,Z)
		&=&(\nabla_Y\Psi)(\f Z,X)+(\nabla_{\f Z}\Psi)(X,Y)-(\nabla_Y\Psi)(Z,\f X)-(\nabla_Z\Psi)(X,\f Y)\\
		&=&(\nabla_{\f Z}\Psi)(\f^2Y,X)+(\nabla_Z\Psi)(\f Y,X)=0.
	\end{eqnarray*}
\end{proof}

\subsection{Sectional curvatures and Ricci curvature}
Let us denote by $R$ the Riemannian curvature tensor field of $(M,g)$, defined by $R(X,Y)Z=[\nabla_X,\nabla_Y]Z-\nabla_{[X,Y]}Z$. The sectional curvature at $x\in M$ of a 2-plane spanned by orthonormal $u,v\in T_xM$ is given by $K(u,v)=g_x(R_x(u,v)v,u)$. In the following, for an almost contact metric manifold $(M,\f,\xi,\eta,g)$ and for any unit vector field $X\in\Gamma(\D)$, we will consider the $\xi$-sectional curvature $K(\xi,X)$ defined at each point $x\in M$ by $K(\xi_x,X_x)$. 

\begin{proposition}\label{Prop:nabla_psi}
	Let $(M,\f,\xi,\eta,g)$ be an anti-quasi-Sasakian manifold. Then, for every $X,Y\in\X(M)$ one has:
	\begin{itemize}
		\item[(i)] $(\nabla_X\psi)Y=R(\xi,X)Y$;
		\item[(ii)] $R(X,Y)\xi=-(\nabla_X\psi)Y+(\nabla_Y\psi)X$;
		\item[(iii)] $\psi^2X=R(\xi,X)\xi$.
	\end{itemize}
	In particular, $M$ has nonnegative $\xi$-sectional curvatures, and for every unit eigenvector field $X\in\Gamma(\D)$ of $\psi^2$, with eigenfunction $-\lambda^2$, $K(\xi,X)=\lambda^2$.
\end{proposition}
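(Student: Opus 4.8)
The plan is to establish (i) first, since (ii) is a direct expansion and (iii) is just the special case $Y=\xi$ of (i). Throughout I would use $\nabla_X\xi=-\psi X$, which is immediate from $\psi=-\nabla\xi$ (see \eqref{eq:psi}), together with $\psi\xi=0$ and the skew-symmetry of $\psi$. The crucial geometric input, used only for (i), is that $\xi$ is a Killing vector field (Proposition \ref{Prop:aqS-Killing}); parts (ii) and (iii) are formal consequences of the definitions.

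For (ii) I would simply insert $\nabla_\bullet\xi=-\psi\,\bullet$ into $R(X,Y)\xi=\nabla_X\nabla_Y\xi-\nabla_Y\nabla_X\xi-\nabla_{[X,Y]}\xi$. Writing $\nabla_X\nabla_Y\xi=-(\nabla_X\psi)Y-\psi\nabla_XY$ and similarly for the other two terms, the three occurrences of $\psi$ applied to $\nabla_XY$, $\nabla_YX$ and $[X,Y]=\nabla_XY-\nabla_YX$ cancel by torsion-freeness, leaving exactly $R(X,Y)\xi=-(\nabla_X\psi)Y+(\nabla_Y\psi)X$. This holds verbatim and requires no further hypothesis.

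The heart of the matter is (i). Consider the $(0,3)$-tensor $T(X,Y,Z):=g((\nabla_X\psi)Y,Z)$. Since $\psi$ is skew-symmetric and $g$ is parallel, $\nabla_X\psi$ is again skew-symmetric, so $T$ is skew in its last two arguments, $T(X,Y,Z)=-T(X,Z,Y)$. On the other hand, part (ii) controls the skew part in the first two arguments: $T(X,Y,Z)-T(Y,X,Z)=-g(R(X,Y)\xi,Z)$. I would then run the standard Koszul-type manipulation, chaining these two symmetries around the three cyclic permutations of $(X,Y,Z)$; this expresses $2\,T(X,Y,Z)$ as an alternating cyclic sum of the curvature terms $g(R(\cdot,\cdot)\xi,\cdot)$, which collapses, using the first Bianchi identity together with the pair symmetry and skew-symmetry of $R$, to $2\,g(R(\xi,X)Y,Z)$. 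This yields $(\nabla_X\psi)Y=R(\xi,X)Y$. I expect this to be the main obstacle, precisely because of the sign bookkeeping: one must consistently use the paper's convention $R(X,Y)Z=[\nabla_X,\nabla_Y]Z-\nabla_{[X,Y]}Z$, since a sign slip here would flip (i) and destroy the nonnegativity asserted below.

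Finally, (iii) follows by setting $Y=\xi$ in (i): since $\psi\xi=0$ and $\nabla_X\xi=-\psi X$, one has $(\nabla_X\psi)\xi=-\psi\nabla_X\xi=\psi^2X$, whence $\psi^2X=R(\xi,X)\xi$. For the last assertion, take a unit $X\in\Gamma(\D)$; using the symmetries of $R$, identity (iii), and the skew-symmetry of $\psi$,
\[
K(\xi,X)=g(R(\xi,X)X,\xi)=g(R(X,\xi)\xi,X)=-g(\psi^2X,X)=g(\psi X,\psi X)\ge 0,
\]
and if $\psi^2X=-\lambda^2X$ with $|X|=1$ this equals $\lambda^2$, as claimed.
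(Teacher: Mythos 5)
Your proposal is correct, but it is organized in the reverse logical order from the paper's proof, and it is self-contained where the paper relies on a citation. The paper proves (i) by invoking the Kobayashi--Nomizu formula for Killing vector fields (\cite[vol.~I, Chap.~VI, Proposition 2.6]{KN}) applied to $\psi=-\nabla\xi$, then obtains (ii) from (i) via the first Bianchi identity, and (iii) by setting $Y=\xi$ in (i). You instead prove (ii) first by direct expansion of $R(X,Y)\xi$ using $\nabla\xi=-\psi$ and torsion-freeness --- correctly noting that this step needs no Killing hypothesis at all --- and then derive (i) by the cyclic Koszul-type symmetrization: skewness of $T(X,Y,Z)=g((\nabla_X\psi)Y,Z)$ in $(Y,Z)$ (which is where the Killing property enters, via skew-symmetry of $\psi$) chained against the $(X,Y)$-antisymmetry supplied by (ii), collapsing via pair symmetry, skewness, and Bianchi to $2\,g(R(\xi,X)Y,Z)$. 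This is precisely the standard proof of the formula the paper cites, so mathematically the two routes coincide at bottom; what your version buys is a fully self-contained argument and the explicit observation that (ii) is independent of $\xi$ being Killing, at the cost of the sign bookkeeping you rightly flag as the delicate point (I checked the symmetrization: it does close up with the paper's curvature convention). The derivations of (iii) and of the curvature statement match the paper's, modulo the cosmetic difference that you write $K(\xi,X)=g(R(X,\xi)\xi,X)$ where the paper writes $-g(R(\xi,X)\xi,X)$. One small wording quibble: you say the Killing input is ``used only for (i)'' and that (iii) is a formal consequence of the definitions, but since your (iii) is deduced from (i), it inherits that hypothesis; this does not affect correctness.
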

\begin{proof}
	Since $\xi$ is Killing and $\psi=-\nabla\xi$, the first identity follows from a well known formula (see \cite[vol. I, Chap. VI, Proposition 2.6]{KN}). Then (ii) follows from the Bianchi identity:
	$$R(X,Y)\xi=-R(Y,\xi)X-R(\xi,X)Y=R(\xi,Y)X-R(\xi,X)Y.$$
	Applying (i) for $Y=\xi$ we get (iii):
	$$R(\xi, X)\xi=(\nabla_X\psi)\xi=-\psi\nabla_X\xi=\psi^2X.$$
	Moreover, for a unitary $X\in\Gamma(\D)$, the $\xi$-sectional curvature is:
	\begin{equation}\label{eq:K(X,xi)}
		K(\xi,X)=-g(R(\xi,X)\xi,X)=-g(\psi^2X,X)=g(\psi X,\psi X)\ge0,
	\end{equation}
	which also immediately justifies the last statement.
\end{proof}
\medskip

Notice that the $\xi$-sectional curvatures of an anti-quasi-Sasakian manifold $M$ are all vanishing if and only if $M$ is cok\"ahler.
Next we provide a characterization of anti-quasi-Sasakian manifolds with $K(\xi,X)=1$, showing that they are all double aqS-Sasakian manifolds.

\begin{theorem}\label{Thm:K(X,xi)=1}
	Let $(M,\f,\xi,\eta,g)$ be anti-quasi-Sasakian manifold. Then the following are equivalent:
	\begin{itemize}
		\item[(a)] $K(\xi,X)=1$ for every $X\in\Gamma(\D)$;
		\item[(b)] $\psi^2=A^2=-I+\eta\otimes\xi$;
		\item[(c)] $(A,\f,\psi,\xi,\eta,g)$ is a double aqS-Sasakian structure.
	\end{itemize}
\end{theorem}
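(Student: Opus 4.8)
The plan is to prove the two equivalences (a)$\Leftrightarrow$(b) and (b)$\Leftrightarrow$(c) separately. The guiding observation is that the three closedness conditions \eqref{eq:hypothesis} defining a double aqS-Sasakian structure are \emph{already available} on any aqS manifold: indeed $d\Phi=0$ holds by definition, $d\A=0$ by Proposition \ref{Prop:dA=0}, and $d\eta=2\Psi$ follows at once from \eqref{eq:g-psi-deta}. Consequently the entire content of the theorem reduces to a purely algebraic statement about when the skew-symmetric operators $A$ and $\psi$ become genuine almost contact structures closing the quaternionic relations with $\f$, and this is exactly what condition (b) encodes.

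First I would establish (a)$\Leftrightarrow$(b). By \eqref{eq:K(X,xi)} the $\xi$-sectional curvature of a unit $X\in\Gamma(\D)$ is $K(\xi,X)=-g(\psi^2X,X)$. Since $\psi^2$ is symmetric, the requirement $-g(\psi^2X,X)=1=g(X,X)$ for every unit horizontal $X$ upgrades, by homogeneity and polarization, to $-g(\psi^2X,Y)=g(X,Y)$ for all $X,Y\in\Gamma(\D)$, i.e. $\psi^2=-I$ on $\D$. Together with $\psi\xi=0$ this reads $\psi^2=-I+\eta\otimes\xi$ on all of $TM$, and $A^2=\psi^2$ by Remark \ref{Rmk:psi^2}; this is precisely (b), and every step is reversible.

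Next, for (b)$\Rightarrow$(c) I would verify that $(A,\f,\psi,\xi,\eta,g)$ is an $Sp(n)$-almost contact metric structure. Under (b) one has $A^2=\psi^2=-I+\eta\otimes\xi$, and since $A$ and $\psi$ are skew-symmetric, metric compatibility follows, e.g. $g(AX,AY)=-g(X,A^2Y)=g(X,Y)-\eta(X)\eta(Y)$, and likewise for $\psi$; thus $A,\f,\psi$ are three compatible almost contact structures sharing $\xi,\eta,g$. The quaternionic identities \eqref{eq:quaternionic id.} for the triple $(A,\f,\psi)$ then follow from the operator algebra already in hand: $A\f=\psi$ and $\f\psi=A$ by \eqref{eq:psi} and \eqref{eq:fpsi=-psif}, while $\psi A=\f$ is obtained from (b) via $A\psi=A^2\f=-\f$ together with $\psi A=-A\psi$ in \eqref{eq:psi A=-Apsi}. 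Combined with the three automatic form-conditions noted above, this exhibits $(A,\f,\psi,\xi,\eta,g)$ as a double aqS-Sasakian structure. The converse (c)$\Rightarrow$(b) is immediate, since a double aqS-Sasakian structure is in particular $Sp(n)$-almost contact metric, forcing $A^2=-I+\eta\otimes\xi$.

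The main obstacle is not any single computation but bookkeeping the operator algebra of $A$, $\f$, $\psi$ and recognizing that (b) is exactly the one missing ingredient: it simultaneously turns the skew-symmetric endomorphisms $A$ and $\psi$ into almost contact structures and closes the quaternionic relations, whereas all the differential-form constraints are satisfied on every aqS manifold regardless of curvature.
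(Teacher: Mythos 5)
Your proposal is correct and follows essentially the same route as the paper: the equivalence (a)$\Leftrightarrow$(b) via the identity $K(\xi,X)=-g(\psi^2X,X)$ from Proposition \ref{Prop:nabla_psi}, and (b)$\Leftrightarrow$(c) by upgrading $A,\psi$ to compatible almost contact structures through their skew-symmetry, closing the quaternionic relations via $\psi A=-\f A^{2}=\f$, and invoking the three form conditions $d\Phi=0$, $d\A=0$ (Proposition \ref{Prop:dA=0}) and $d\eta=2\Psi$ (equation \eqref{eq:g-psi-deta}). The only cosmetic difference is that you pass from the quadratic condition to $\psi^{2}|_{\D}=-I$ by polarization, whereas the paper phrases it spectrally ($\operatorname{Sp}(\psi^{2})=\{0,-1\}$ with $0$ simple); these are interchangeable.
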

\begin{proof}
	Recall that the $(1,1)$-tensor fields $A$ and $\psi$ attached to the anti-quasi-Sasakian structure satisfy $\psi^2=A^2$ and $\psi\xi=A\xi=0$. Thus, they define almost contact structures with respect to $(\xi,\eta)$ if and only if $\psi^2|_\D=-I$, i.e. the spectrum of $\psi^2$ is $\{0,-1\}$, with $0$ simple eigenvalue. By Proposition \ref{Prop:nabla_psi}, this is equivalent to require that the $\xi$-sectional curvatures are equal to $1$.\\
	Assuming $\psi^2=A^2=-I+\eta\otimes\xi$, the compatibility of the almost contact structures $(\psi,\xi,\eta)$ and $(A,\xi,\eta)$ with the Riemannian metric $g$ is consequence of the skew-symmetry of $\psi$ and $A$. Indeed, for every $X,Y\in\X(M)$:
	$$g(\psi X,\psi Y)=-g(\psi^2X,Y)=g(X,Y)-\eta(X)\eta(Y),$$
	and similarly for $A$.
	Finally, \eqref{eq:psi A=-Apsi} becomes $\psi A=\f=-A\psi$ and, together with \eqref{eq:psi} and \eqref{eq:fpsi=-psif}, it implies that $(A,\f,\psi,\xi,\eta,g)$ is an $Sp(n)$-almost contact metric structure.
	Moreover, $d\Phi=0$ (by definition of aqS structure), $d\A=0$ by Proposition \ref{Prop:dA=0}, and $d\eta=2\Psi$ by equation \eqref{eq:g-psi-deta}, so that $(A,\f,\psi,\xi,\eta,g)$ is a double aqS-Sasakian structure.
\end{proof}
\smallskip

\begin{remark}\label{Rmk:scaling}
	Assuming $K(\xi,X)=\lambda^2$ for every $X\in\Gamma(\D)$, or equivalently $\psi^2|_\D=-\lambda^2I$, with $\lambda\in\R\setminus\{0\}$, one can always find an underlying double aqS-Sasakian structure. Indeed, first notice that the class of aqS structures is invariant under homothetic deformations, defined by
	$$\f'=\f,\quad \xi'=\frac1\lambda\xi,\quad \eta'=\lambda\eta,\quad g'=\lambda^2g,$$
	and the associated operators $A'$ and $\psi'$ are given by  $A'=\frac1\lambda A$ and $\psi'=\frac1\lambda\psi$.
	Therefore, if  $\psi^2|_\D=-\lambda^2I$, then $$\psi'^2=A'^2=-I+\eta'\otimes\xi',$$
	and $(A',\f',\psi',\xi',\eta',g')$ is double aqS-Sasakian by the previous theorem.
	In particular, by Theorem \ref{Thm:Sp(n)-struct.}, it locally projects onto a hyperk\"ahler structure. Moreover, $g$ and $g'$ project on homothetic Riemannian metrics along the vertical distribution, so that both $g$ and $g'$ are transversely Ricci-flat (cfr. Theorem \ref{Prop.eta-Einstein}).
\end{remark}
\medskip

\begin{remark}
	The condition $K(\xi,X)=\lambda^2>0$ (or in particular $K(\xi,X)=1$) for every $X\in\Gamma(\D)$ is not always satisfied. For instance, considering the two anti-quasi-Sasakian structures of the weighted Heisenberg Lie group (Section \ref{Ex:w-Heisenberg}), one has:
	$$\operatorname{Sp}(A^2_{\f_1})=\operatorname{Sp}(A^2_{\f_2})=\{0,-\lambda_1^2,\dots,-\lambda_n^2\}.$$
\end{remark}
\medskip

\begin{proposition}\label{Prop:Ric}
	Let $(M,\f,\xi,\eta,g)$ be an anti-quasi-Sasakian manifold. Then the Ricci tensor field satisfies the following identities:
	\begin{itemize}
		\item[(i)] $\Ric(\xi,\xi)=|\nabla\xi|^2=|\psi|^2$;
		\item[(ii)] $\Ric(\xi,X)=0$;
		\item[(iii)] $\Ric(X,Y)=\Ric^T(X',Y')-2g(\psi X,\psi Y)$,
	\end{itemize}
	where  $\Ric^T$ is the Ricci tensor field of the base space of the local Riemannian submersion $\pi:M\to M/\xi$, and $X,Y\in\Gamma(\mathcal{D})$ are basic vector fields projecting on $X',Y'$ respectively.
	Furthermore, the scalar curvatures of $M$ and $M/\xi$ are related by
	$$s=s^T-|\nabla\xi|^2=s^T-|\psi|^2.$$
\end{proposition}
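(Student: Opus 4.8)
The plan is to compute each component of the Ricci tensor as a trace in a local orthonormal frame adapted to $TM=\D\oplus\langle\xi\rangle$, say $\{e_1,\dots,e_{2n},\xi\}$ with $e_a\in\Gamma(\D)$, exploiting the curvature identities of Proposition \ref{Prop:nabla_psi} together with $\psi=-\nabla\xi$ and the skew-symmetry of $\psi$ (hence of each $\nabla_{e_a}\psi$, since $g$ is parallel). Since $\nabla_\xi\xi=0$ and the leaves of the Reeb foliation are one-dimensional, the fibres of the local submersion $\pi:M\to M/\xi$ of Theorem \ref{Thm:local submersion} are totally geodesic, and $\pi$ is a Riemannian submersion onto the K\"ahler base. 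For (i) I would trace the identity $R(\xi,X)\xi=\psi^2X$ of Proposition \ref{Prop:nabla_psi}(iii): using $R(\xi,\xi)\xi=0$,
\[\Ric(\xi,\xi)=\sum_i g(R(e_i,\xi)\xi,e_i)=-\sum_i g(\psi^2e_i,e_i)=\sum_i|\psi e_i|^2=|\psi|^2=|\nabla\xi|^2,\]
the third equality being skew-symmetry of $\psi$.

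The key point is (ii). For $X\in\Gamma(\D)$, writing the trace over the adapted frame (the $\xi$-term dropping by $R(\xi,\xi)=0$), applying $R(\xi,e_a)X=(\nabla_{e_a}\psi)X$ from Proposition \ref{Prop:nabla_psi}(i), and then the skew-symmetry of $\nabla_{e_a}\psi$, one obtains
\[\Ric(\xi,X)=-\sum_a g((\nabla_{e_a}\psi)X,e_a)=\sum_a g((\nabla_{e_a}\psi)e_a,X)=-\sum_a(\nabla_{e_a}\Psi)(e_a,X),\]
so that $\Ric(\xi,X)$ is (minus) the horizontal codifferential of $\Psi$. To kill this I would invoke the third identity of Lemma \ref{Lemma:identita nablaPsi} with all three arguments horizontal, which reads $(\nabla_{\f e_a}\Psi)(\f e_a,X)+(\nabla_{e_a}\Psi)(e_a,X)=0$; summing over $a$ and using that $\{\f e_a\}$ is again an orthonormal frame of $\D$ gives $2\sum_a(\nabla_{e_a}\Psi)(e_a,X)=0$, whence $\Ric(\xi,X)=0$. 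This is where the transverse K\"ahler / $\f$-compatibility of $d\eta$ genuinely enters, and I expect it to be the crux.

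For (iii) I would split the full trace for basic horizontal $X,Y$ as $\Ric(X,Y)=\sum_a g(R(e_a,X)Y,e_a)+g(R(\xi,X)Y,\xi)$. The vertical term follows from Proposition \ref{Prop:nabla_psi}(i): since $\nabla_X\xi=-\psi X$ and $\psi\xi=0$,
\[g(R(\xi,X)Y,\xi)=g((\nabla_X\psi)Y,\xi)=g(\psi X,\psi Y).\]
For the horizontal trace I would apply O'Neill's Ricci formula for a Riemannian submersion with totally geodesic fibres. A direct computation of the integrability tensor gives $A_XY=\mathcal V(\nabla_XY)=g(\psi X,Y)\xi$, so that $\sum_a g(A_Xe_a,A_Ye_a)=g(\psi X,\psi Y)$, and Besse's formula $\Ric(X,Y)=\Ric^B(\pi_*X,\pi_*Y)-2\sum_a g(A_Xe_a,A_Ye_a)$ yields $\Ric(X,Y)=\Ric^T(X',Y')-2g(\psi X,\psi Y)$, as claimed. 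Equivalently, one can avoid citing O'Neill and contract the submersion curvature identity directly, the horizontal trace contributing $\Ric^T(X',Y')-3g(\psi X,\psi Y)$, which combines with the vertical term $+g(\psi X,\psi Y)$ to the same result.

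Finally the scalar curvature relation is obtained by tracing (i) and (iii): $s=\Ric(\xi,\xi)+\sum_a\Ric(e_a,e_a)$, where $\Ric(\xi,\xi)=|\psi|^2$ and $\sum_a\Ric(e_a,e_a)=s^T-2\sum_a|\psi e_a|^2=s^T-2|\psi|^2$ (since $\psi\xi=0$ gives $\sum_a|\psi e_a|^2=|\psi|^2$), so that $s=s^T-|\psi|^2=s^T-|\nabla\xi|^2$. The only genuine obstacles are the vanishing of the horizontal codifferential of $\Psi$ in (ii) and, in (iii), the correct bookkeeping of the integrability-tensor terms in O'Neill's formula, including reconciling the sign convention $R(X,Y)Z=[\nabla_X,\nabla_Y]Z-\nabla_{[X,Y]}Z$ used here with the convention underlying the cited submersion formulas.
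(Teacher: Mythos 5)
Your proposal is correct and follows essentially the same route as the paper: (i) by tracing the identity $R(\xi,X)\xi=\psi^2X$ of Proposition \ref{Prop:nabla_psi}, (ii) by reducing $\Ric(\xi,\cdot)$ to the horizontal codifferential of $\Psi$ and annihilating it with the third identity of Lemma \ref{Lemma:identita nablaPsi}, and (iii) via the O'Neill Ricci formula for a Riemannian submersion with totally geodesic fibres, with integrability tensor $\mathsf{A}_XY=g(\psi X,Y)\xi$. The only differences are cosmetic: in (ii) the paper cancels the terms pairwise in a $\f$-adapted frame $\{e_i,\f e_i\}$ rather than by frame-invariance of the trace, and in (iii) it cites \cite{Falcitelli} rather than Besse for the submersion formula (your closing remark correctly reconciles the horizontal-trace count $\Ric^T-3g(\psi X,\psi Y)$ with the vertical term).
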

\begin{proof}
	Choosing a local othornormal frame $\{\xi,e_1,\dots,e_{2n}\}$ (where $\dim M=2n+1$), as in \eqref{eq:K(X,xi)} we have that
	$$\Ric(\xi,\xi)=\sum_{i=1}^{2n}K(\xi,e_i)=\sum_{i=1}^{2n}g(\psi e_i,\psi e_i)=|\psi|^2.$$
	
	In order to prove equation (ii), we fix a local orthonormal $\f$-basis, that is $e_{n+i}=\f e_i$ for $i=1,\dots,n$. For every $X\in\Gamma(\D)$, applying property (i) of Proposition \ref{Prop:nabla_psi}, we have:
	\begin{eqnarray*}
		\Ric(\xi,X)&=&\sum_{i=1}^{n}\{g(R(\xi,e_i)e_i,X)+g(R(\xi,\f e_i)\f e_i,X)\}\\
		&=&\sum_{i=1}^{n}\{g((\nabla_{e_i}\psi)e_i,X)+g((\nabla_{\f e_i}\psi)\f e_i,X)\}\\
		&=&{}-\sum_{i=1}^{n}\{(\nabla_{e_i}\Psi)(e_i,X)+(\nabla_{\f e_i}\Psi)(\f e_i,X)\}=0,
	\end{eqnarray*}
	where we applied the third identity of Lemma \ref{Lemma:identita nablaPsi}, being $\eta(e_i)=\eta(X)=0$.
	
	Now, consider the local Riemannian submersion $\pi:M\to M/\xi$ and let $\mathsf{T}$ and $\mathsf{A}$ be the O'Neill tensors related to $\pi$. Since $\nabla_\xi\xi=0$, the leaves of the distribution spanned by $\xi$ are totally geodesic, and hence $\mathsf{T}=0$. Moreover, for every $X,Y$ horizontal vector fields
	$$\mathsf{A}_XY=\frac12v([X,Y])=\frac12\eta([X,Y])\xi=-\frac12d\eta(X,Y)\xi=-\Psi(X,Y)\xi,$$
	where $v$ denotes the vertical component.
	Then, applying equations in \cite[Proposition 1.7]{Falcitelli}, for every $X,Y$ basic vector fields projecting on $X',Y'$ respectively, one has:
	\begin{eqnarray*}
		\Ric(X,Y)&=&\Ric^T(X',Y')-2\sum_{i=1}^{2n}g(\mathsf{A}_Xe_i,\mathsf{A}_Ye_i)\\
		&=&\Ric^T(X',Y')-2\sum_{i=1}^{2n}\Psi(X,e_i)\Psi(Y,e_i)\\
		&=&\Ric^T(X',Y')-2\sum_{i=1}^{2n}g(\psi X,e_i)g(\psi Y,e_i)\\
		&=&\Ric^T(X',Y')-2g(\psi X,\psi Y).
	\end{eqnarray*}
	Finally, applying equations (i) and (iii), we get:
	$$	s=\sum_{i=1}^{2n}\Ric(e_i,e_i)+\Ric(\xi,\xi)
	=\sum_{i=1}^{2n}\Ric^T(e'_i,e'_i)-2\sum_{i=1}^{2n}g(\psi e_i,\psi e_i)+|\psi|^2
	=s^T-|\psi|^2.$$
\end{proof}
\smallskip

\begin{corollary}
	The Reeb vector field of an anti-quasi-Sasakian manifold $(M,\f,\xi,\eta,g)$ is an eigenvector field of the Ricci operator $Q$. Moreover, $Q\f=\f Q$.
\end{corollary}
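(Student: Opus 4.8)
The plan is to read both assertions directly off the Ricci identities of Proposition \ref{Prop:Ric}, where $Q$ is the Ricci operator determined by $g(QX,Y)=\Ric(X,Y)$. For the first claim I would use (ii) and (i): since $\Ric(\xi,X)=0$ for every $X\in\Gamma(\D)$, the vector $Q\xi$ has vanishing horizontal component, while $g(Q\xi,\xi)=\Ric(\xi,\xi)=|\psi|^2$. Hence $Q\xi=|\psi|^2\,\xi$, so $\xi$ is an eigenvector field of $Q$, with eigenfunction $|\psi|^2$ (which need not be constant).

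For the commutation $Q\f=\f Q$ I would first reduce the problem, using the skew-symmetry of $\f$, to the single identity $\Ric(\f X,\f Y)=\Ric(X,Y)$ for all $X,Y\in\Gamma(\D)$. Indeed, testing $Q\f=\f Q$ against an arbitrary $Y$ shows it is equivalent to $\Ric(\f X,Y)=-\Ric(X,\f Y)$ for all $X,Y\in\X(M)$. On any pair involving $\xi$ both sides vanish, because $\f\xi=0$ and $\Ric(\xi,\cdot)$ kills $\D$ by (ii); for $X,Y\in\Gamma(\D)$, replacing $Y$ by $\f Y$ and using $\f^2=-I+\eta\otimes\xi$ together with $\Ric(\cdot,\xi)=0$ turns the identity into the stated $\f$-invariance, and this step is reversible since $\f$ is invertible on $\D$.

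The heart of the argument is then the $\f$-invariance of the horizontal Ricci tensor, which I would obtain from (iii). Writing $\Ric(X,Y)=\Ric^T(X',Y')-2g(\psi X,\psi Y)$ for basic $X,Y$, I would handle the two terms separately. Since $(\f,g)$ is projectable and the transverse geometry is K\"ahler (Theorem \ref{Thm:local submersion}), $\f X$ is again basic with $(\f X)'=J X'$, and the K\"ahler Ricci tensor is $J$-invariant, so $\Ric^T((\f X)',(\f Y)')=\Ric^T(X',Y')$. For the second term, \eqref{eq:fpsi=-psif} gives $\psi\f=-\f\psi$, whence $\psi\f X=-\f\psi X$; as $\psi X,\psi Y\in\Gamma(\D)$ (because $\eta\circ\psi=0$) and $\f$ is a $g$-isometry on $\D$, one gets $g(\psi\f X,\psi\f Y)=g(\f\psi X,\f\psi Y)=g(\psi X,\psi Y)$. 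Combining the two computations yields $\Ric(\f X,\f Y)=\Ric(X,Y)$, completing the proof.

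The main obstacle is conceptual rather than computational, and it lies in the second term: one must invoke that $\psi$ anticommutes with $\f$ and that both preserve $\D$, on which $\f$ is isometric, so that the contributions of $\psi\f$ and $\f\psi$ coincide. The $J$-invariance of $\Ric^T$ is the standard K\"ahler fact, and everything else is bookkeeping through the local Riemannian submersion $\pi\colon M\to M/\xi$.
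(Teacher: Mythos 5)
Your proposal is correct and follows essentially the same route as the paper: both deduce $Q\xi=|\psi|^2\xi$ from identities (i)--(ii) of Proposition \ref{Prop:Ric}, and both obtain $Q\f=\f Q$ from the $\f$-invariance $\Ric(\f X,\f Y)=\Ric(X,Y)$ on $\Gamma(\D)$, proved via identity (iii), the $J$-invariance of the transverse K\"ahler Ricci tensor, and the anticommutation $\psi\f=-\f\psi$. The only difference is cosmetic bookkeeping: you reduce upfront to $\Ric(\f X,Y)=-\Ric(X,\f Y)$ and treat pairs involving $\xi$ separately, whereas the paper concludes with $\f Q\f=-Q$ along $\D$ together with $\f Q\xi=0$.
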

\begin{proof}
	Recall that the Ricci operator $Q$ is defined by $g(QX,Y)=\Ric(X,Y)$ for every $X,Y\in\X(M)$. Then, the first two identities in the above proposition easily yield $Q\xi=|\psi|^2\xi$.
	Moreover, given two basic vector fields $X,Y\in\Gamma(\D)$ projecting onto $X',Y'\in\X(M/\xi)$, $\f X$ and $\f Y$ project onto $JX'$ and $JY'$ respectively. Using the third equation in Proposition \ref{Prop:Ric}, since the Ricci curvature of a K\"ahler manifold is $J$-invariant and $\psi\f=-\f\psi$, we have:
	\begin{eqnarray*}
		\Ric(\f X,\f Y)&=&\Ric^T(JX',JY')-2g(\psi\f X,\psi\f Y)\\
		&=&\Ric^T(X',Y')-2g(\psi X,\psi Y)\\
		&=&\Ric(X,Y).
	\end{eqnarray*}
	The equality holds true replacing $Y$ by $\xi$, since $\Ric(X,\xi)=0$ for every $X\in\Gamma(\D)$.
	Hence we showed that $-g(\f Q\f X,Y)=g(QX,Y)$ for every $X\in\Gamma(\D)$ and $Y\in\X(M)$, i.e. $\f Q\f=-Q$ along $\D$. This, together with $\f Q\xi=0$, gives $\f Q=Q\f$.
\end{proof}
\medskip

Recall that an almost contact metric manifold is called $\eta$-Einstein if
\begin{equation}\label{eq:def.eta_Einstein}
	\Ric=\mu g+\nu\eta\otimes\eta,\quad \mu,\nu\in\R.
\end{equation}
We investigate this condition for the class of anti-quasi-Sasakian manifolds.

\begin{theorem}\label{Prop.eta-Einstein}
	Let $(M,\f,\xi,\eta,g)$ be a transversely Einstein, non cok\"ahler, anti-quasi-Sasakian manifold. Then it is $\eta$-Einstein if and only if $\psi^2|_\D=-\lambda^2I$, with $\lambda\in\R\setminus\{0\}$. In this case $M$ turns out to be transversely Ricci-flat; the Ricci tensor and the scalar curvature of $g$ are given by
	\begin{equation}\label{eq:Ric_lambda}
		\Ric=-2\lambda^2g+(4n+2)\lambda^2\eta\otimes\eta,\quad s=-4n\lambda^2,
	\end{equation}
	where $\dim M=4n+1$.
\end{theorem}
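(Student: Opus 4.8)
The plan is to extract everything from the three Ricci identities of Proposition \ref{Prop:Ric}, which express the horizontal part of $\Ric$ through the transverse Ricci tensor $\Ric^T$ and the operator $\psi^2$. The key relation is (iii): for basic horizontal fields $X,Y\in\Gamma(\D)$ projecting onto $X',Y'$,
\[
\Ric(X,Y)=\Ric^T(X',Y')-2g(\psi X,\psi Y)=\Ric^T(X',Y')+2g(\psi^2X,Y),
\]
supplemented by $\Ric(\xi,X)=0$ and $\Ric(\xi,\xi)=|\psi|^2$. Since the transverse K\"ahler structure is Einstein (and $\dim M/\xi=4n\ge4$ in the non-cok\"ahler case), I would write $\Ric^T=c\,k$ with $c$ a constant.

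For the implication $\eta$-Einstein $\Rightarrow\psi^2|_\D=-\lambda^2 I$, I would compare the above with the defining relation $\Ric(X,Y)=\mu\,g(X,Y)$ on $\D$. This gives $-2g(\psi^2X,Y)=(c-\mu)g(X,Y)$ for all $X,Y\in\Gamma(\D)$, and the symmetry of $\psi^2$ then forces $\psi^2|_\D=-\lambda^2 I$ with constant $\lambda^2=\tfrac12(c-\mu)\ge0$. The non-cok\"ahler hypothesis yields $\psi\neq0$ (Remark \ref{Rmk:psi^2}), so $\lambda\in\R\setminus\{0\}$; note also that $\psi^2|_\D=-\lambda^2 I$ with $\lambda\neq0$ gives $\operatorname{Ker}\psi=\langle\xi\rangle$, hence $\mathcal{E}=0$ and maximal rank $\dim M=4n+1$.

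For the converse and the explicit formulas, the decisive step is transverse Ricci-flatness. Assuming $\psi^2|_\D=-\lambda^2 I$ with $\lambda$ a nonzero constant, Remark \ref{Rmk:scaling} furnishes (after a homothety) an underlying double aqS-Sasakian structure, which by Theorem \ref{Thm:Sp(n)-struct.} locally projects onto a hyperk\"ahler manifold; as hyperk\"ahler metrics are Ricci-flat and the Ricci tensor is invariant under constant rescaling, this forces $\Ric^T=0$, i.e. $c=0$. Feeding $c=0$ back into the identity above yields $\Ric(X,Y)=-2\lambda^2 g(X,Y)$ on $\D$, while $\Ric(\xi,\xi)=|\psi|^2=-\operatorname{tr}(\psi^2|_\D)=4n\lambda^2$ and $\Ric(\xi,\cdot)=0$ on $\D$; assembling these gives $\Ric=-2\lambda^2 g+(4n+2)\lambda^2\,\eta\otimes\eta$, and a trace produces $s=-4n\lambda^2$.

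I expect the main obstacle to be the transverse Ricci-flatness rather than the mere Einstein condition: the explicit constants $-2\lambda^2$ and $(4n+2)\lambda^2$ depend on $c=0$, and this vanishing is invisible at the level of the Ricci identities --- it is obtained only by recognizing the maximal-rank condition $\psi^2|_\D=-\lambda^2 I$ as a rescaled double aqS-Sasakian (hence transversely hyperk\"ahler) situation through Remark \ref{Rmk:scaling} and Theorem \ref{Thm:Sp(n)-struct.}. A routine point to verify along the way is that $\lambda$ is genuinely constant, which is automatic since it equals $\tfrac12(c-\mu)$ with $c,\mu$ constant.
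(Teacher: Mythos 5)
Your proposal is correct and takes essentially the same route as the paper's proof: both reduce the problem via Proposition \ref{Prop:Ric} and the transversely Einstein hypothesis to the equivalence of the $\eta$-Einstein condition with $\psi^2|_\D=-\lambda^2 I$, $\lambda$ a nonzero constant, and both then obtain transverse Ricci-flatness from Remark \ref{Rmk:scaling} (the rescaled double aqS-Sasakian structure projecting onto a hyperk\"ahler, hence Ricci-flat, transverse geometry), after which the formulas for $\Ric$ and $s$ follow by direct assembly. You merely make explicit some steps the paper compresses, namely the identification $\lambda^2=\tfrac12(c-\mu)$ and the hyperk\"ahler Ricci-flatness argument hidden inside Remark \ref{Rmk:scaling}.
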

\begin{proof}
	Since $(M,\f,\xi,\eta,g)$ is transversely Einstein, equation (iii) in Proposition \ref{Prop:Ric} reduces to
	\begin{equation}\label{eq:Ricci-eta-Einstein}
		\Ric(X,Y)=\rho g(X,Y)+2g(X,\psi^2 Y)
	\end{equation} for some $\rho\in\R$ and for every $X,Y\in\Gamma(\D)$ basic vector fields. Taking into account also equations (i) and (ii) of the same proposition, we conclude that $M$ is $\eta$-Einstein if and only if $\psi^2|_\D=-\lambda^2I$, for some constant $\lambda\in\R\setminus\{0\}$. In this case, by Remark \ref{Rmk:scaling}, $M$ is transversely Ricci-flat ($\rho=0$). Since the aqS structure is of maximal rank, $\dim M=4n+1$ and hence $\Ric(\xi,\xi)=4n\lambda^2$. Using also equation \eqref{eq:Ricci-eta-Einstein} with $\rho=0$ and $\Ric(\xi,X)=0$ for every $X\in\Gamma(\D)$, one gets the expressions for the Ricci tensor and the scalar curvature.
\end{proof}
\medskip

\begin{remark}\label{Rmk:Ric_lambda}
	It is worth remarking that, arguing as in the proof of the above theorem, the condition $\psi^2|_\D=-\lambda^2I$, $\lambda\neq0$, together with Remark \ref{Rmk:scaling}, gives the explicit expression of the Ricci tensor field in \eqref{eq:Ric_lambda}, without assuming the metric to be transversely Einstein.
	
	In particular, for anti-quasi-Sasakian manifolds with $K(\xi,X)=1$ (see Theorem \ref{Thm:K(X,xi)=1}), $$\Ric=-2g+(4n+2)\eta\otimes\eta,\quad s=-4n$$ and $(M,\psi,\xi,\eta,g)$ turns out to be a \textit{null Sasakian $\eta$-Einstein manifold} (see \cite{BGM}).
\end{remark}
\medskip

In general the Riemannian metric of an anti-quasi-Sasakian manifold is not necessarily $\eta$-Einstein as in the following two examples.
\medskip

\begin{example}
	The weighted Heisenberg Lie group is transversely flat, and by Proposition \ref{Prop:Ric}, in the fixed orthonormal basis of left invariant vector fields, the Riemannian Ricci tensor is represented by the following diagonal matrix
	$$\begin{pmatrix}
		-8\sum_{i=1}^n\lambda_i^2&&&\\
		&-2\lambda_1^2I_4&&\\
		&&\ddots&\\
		&&&-2\lambda_n^2I_4
	\end{pmatrix}.$$
\end{example}
\smallskip

\begin{example}\label{example-disc}
	Consider the principal $\mathbb{S}^1$-bundle $M$ over the complex unit disc $(D^2,J,k)$, endowed with the aqS structure $(\f,\xi,\eta,g)$ defined as in Example \ref{Ex:disco}. Since $D^2$ has constant holomorphic sectional curvature $c<0$, the manifold $M$ is transversely Einstein (not transversely Ricci-flat). In fact the structure here is not $\eta$-Einstein because $\psi^2$ has a single non constant eigenfunction as shown in the following. In the global frame $\{\frac{\partial}{\partial x^i},\frac{\partial}{\partial y^i}, \frac{d}{dt}\}$, $i=1,2$, the Riemannian metric $g$ and the $2$-form $d\eta=\pi^*\omega$ are represented by the matrices
	$$G=\begin{pmatrix}
		b_2&b_3&0&b_4&0\\
		b_3&b_1+x_1^2&-b_4&-x_1y_1&x_1\\
		0&-b_4&b_2&b_3&0\\
		b_4&-x_1y_1&b_3&b_1+y_1^2&-y_1\\
		0&x_1&0&-y_1&1\\
	\end{pmatrix},\quad
	F=\begin{pmatrix}
		0&1&0&0&0\\
		-1&0&0&0&0\\
		0&0&0&-1&0\\
		0&0&1&0&0\\
		0&0&0&0&0
	\end{pmatrix}$$
	where $$b_1=a(1-x_1^2-y_1^2),\ b_2=a(1-x_2^2-y_2^2),\ b_3=a(x_1x_2+y_1y_2),\ b_4=a(x_1y_2-x_2y_1),$$ with $a=\frac{-4}{c(1-|z|^2)^2}$ and $z=(x_1,x_2,y_1,y_2)$.\\
	Using $d\eta=2g(\cdot,\psi\cdot)$, the matrix of $\psi$ is $P=\frac12G^{-1}F$, and the matrix $P^2$ associated to $\psi^2$ admits a unique strictly negative eigenfunction of multiplicity $4$ given by
	$$-\lambda^2=-\frac{c^2(1-|z|^2)^3}{64}.$$
	Notice that, since $\Ric^T=\frac32c\,k$, from Proposition \ref{Prop:Ric} it follows that $\Ric=\mu g+\nu\eta\otimes\eta$ with $\mu=\frac32c-2\lambda^2$ and $\nu=6\lambda^2-\frac32c$ non constant functions. It is worth remarking that this provides a difference with respect to the class of $K$-contact manifolds. Indeed, for such manifolds, assuming $\dim M\ge5$, if equation \eqref{eq:def.eta_Einstein} is satisfied for some functions $\mu$ and $\nu$, then these are necessarily constant (see \cite[Proposition 11.8.1]{BoyGal}).
\end{example}
\medskip

We apply the above results on the Riemannian Ricci curvature to the case of aqS manifolds with constant sectional curvature.

\begin{theorem}\label{Thm:constant curvature}
	Let $(M,\f,\xi,\eta,g)$ be an anti-quasi-Sasakian manifold with constant sectional curvature $\kappa$. Then $\kappa=0$ and the manifold is cok\"ahler.
\end{theorem}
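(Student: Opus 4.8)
The plan is to reduce everything to the operator $\psi^2$, whose eigenvalues govern the $\xi$-sectional curvatures by Proposition \ref{Prop:nabla_psi}, and then to confront the Ricci tensor produced by constant sectional curvature with the rigid Ricci tensor that a nondegenerate $\psi^2$ forces through Remark \ref{Rmk:Ric_lambda}.

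First I would substitute the constant-curvature expression $R(X,Y)Z=\kappa\big(g(Y,Z)X-g(X,Z)Y\big)$ into identity (iii) of Proposition \ref{Prop:nabla_psi}. For $X\in\Gamma(\D)$ this gives
$$\psi^2X=R(\xi,X)\xi=\kappa\big(\eta(X)\xi-X\big)=-\kappa X,$$
so $\psi^2|_\D=-\kappa I$. Since the $\xi$-sectional curvatures are nonnegative (Proposition \ref{Prop:nabla_psi}), evaluating $K(\xi,X)=g(\psi X,\psi X)=\kappa$ on a unit horizontal field already forces $\kappa\ge0$.

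The decisive step is to rule out $\kappa>0$. In that case $\psi^2|_\D=-\kappa I$ is nondegenerate, so the aqS structure has maximal rank and $\dim M=4n+1$; applying Remark \ref{Rmk:Ric_lambda} with $\lambda^2=\kappa$ pins the Ricci tensor to
$$\Ric=-2\kappa\,g+(4n+2)\kappa\,\eta\otimes\eta.$$
But constant sectional curvature $\kappa$ in dimension $4n+1$ also gives $\Ric=4n\kappa\,g$. Evaluating both on a unit $X\in\Gamma(\D)$ (where $\eta(X)=0$) yields $-2\kappa=4n\kappa$, i.e. $(4n+2)\kappa=0$; since $n\ge1$ this contradicts $\kappa>0$. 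Hence $\kappa=0$, and then $\psi^2|_\D=0$ gives $g(\psi X,\psi X)=-g(\psi^2X,X)=0$ for every $X\in\Gamma(\D)$, so $\psi X=0$; together with $\psi\xi=0$ this means $\psi=0$, and by Remark \ref{Rmk:psi^2} the manifold is cok\"ahler.

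I expect the only real subtlety to be conceptual rather than computational: the argument works precisely because a nondegenerate $\psi^2$ makes the base of the local submersion transversely hyperk\"ahler, hence Ricci-flat (Remark \ref{Rmk:scaling}), which clashes with the strictly positive transverse Einstein constant imposed by constant positive sectional curvature. I would therefore be careful to invoke Remark \ref{Rmk:Ric_lambda} in the form that does not presuppose the transverse Einstein condition, and to note that maximal rank guarantees $n\ge1$, so that $-2\kappa\neq4n\kappa$ whenever $\kappa>0$.
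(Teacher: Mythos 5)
Your proof is correct and follows essentially the same route as the paper: both substitute the constant-curvature tensor into Proposition~\ref{Prop:nabla_psi}(iii) to obtain $\psi^2=\kappa(-I+\eta\otimes\xi)$, and then use Remark~\ref{Rmk:Ric_lambda} to contradict the Einstein condition forced by constant sectional curvature. The only difference is that you spell out the numerical clash $-2\kappa=4n\kappa$ explicitly (and note $\kappa\ge0$ from the nonnegativity of the $\xi$-sectional curvatures), where the paper simply observes that being $\eta$-Einstein but non-Einstein is a contradiction.
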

\begin{proof}
	Since $M$ has constant sectional curvature $\kappa$, then $M$ is Einstein and for every $X,Y,Z\in\X(M)$ $$R(X,Y)Z=\kappa(g(Y,Z)X-g(X,Z)Y).$$
	In particular for $X=Z=\xi$, applying (iii) of Proposition \ref{Prop:nabla_psi}, we get $\psi^2=\kappa(-I+\eta\otimes\xi)$.
	If $\kappa\neq0$, by Remark \ref{Rmk:Ric_lambda}, $M$ turns out to be $\eta$-Einstein, non Einstein, which is a contradiction. Therefore $\kappa=0$, which implies $\psi=0$ and thus the manifold is cok\"ahler.
\end{proof}

\begin{remark}
	In \cite{Olszak-qS}, Z. Olszak proved that for a quasi-Sasakian manifold of constant sectional curvature $\kappa$, then $\kappa\ge0$. In particular, if $\kappa=0$ the manifold is cok\"ahler, while if $\kappa>0$ the quasi-Sasakian structure is obtained by a homothetic deformation of a Sasakian one.
\end{remark}
\medskip

Other obstructions to the existence of anti-quasi-Sasakian structures come from the fact that no holomorphic $(p,0)$-forms ($p>0)$ can exist on compact K\"ahler manifolds with positive definite Ricci tensor field (\cite[Theorem 20.5]{Moroianu}).
	  
\begin{proposition}
	There exist no compact regular, non cok\"ahler,  anti-quasi-Sasakian manifolds with $\Ric>0$, and no compact regular anti-quasi-Sasakian manifolds of maximal rank with $\Ric\ge0$.
\end{proposition}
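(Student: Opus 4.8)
The plan is to reduce everything to a Bochner-type vanishing statement on the Kähler base of the associated circle bundle. Since $M$ is compact and regular, Theorem \ref{Thm:Boothby-Wang1} applies: $M$ is a principal circle bundle $\pi\colon M\to B=M/\xi$ over a Kähler manifold $(B,J,k)$, with $d\eta=\pi^*\omega$ for a closed $2$-form $\omega$ of type $(2,0)$ on $B$; compactness of $M$ together with the compact ($\mathbb S^1$) fibers forces $B$ to be compact. Next I would observe that $\omega$ is automatically a holomorphic $(2,0)$-form: writing $d=\partial+\bar\partial$, the components $\partial\omega$ and $\bar\partial\omega$ lie in $\Omega^{3,0}$ and $\Omega^{2,1}$ respectively, so $d\omega=0$ forces $\bar\partial\omega=0$. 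Moreover $\omega\neq0$ in both cases of the statement: in the first, non-cokählerity means $d\eta\neq0$, hence $\omega\neq0$; in the second, maximal rank means the transverse $(2,0)$-form is nondegenerate, so again $\omega\neq0$. Thus the compact Kähler manifold $B$ carries a nonzero holomorphic $(2,0)$-form.

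Then I would transfer the curvature hypothesis from $M$ to $B$ via Proposition \ref{Prop:Ric}(iii), which, for a nonzero horizontal $X\in\Gamma(\D)$ projecting to $X'\in TB$, reads $\Ric^T(X',X')=\Ric(X,X)+2|\psi X|^2$ when evaluated pointwise on horizontal lifts. In the first case, $\Ric>0$ gives $\Ric(X,X)>0$, hence $\Ric^T(X',X')>0$ for every nonzero $X'$, so $\Ric^T$ is positive definite. In the second case, maximal rank forces $\operatorname{Ker}\psi=\langle\xi\rangle$, since $\operatorname{Ker}\psi=\langle\xi\rangle\oplus\mathcal{E}$ with $\mathcal{E}=0$ (see Remark \ref{Rmk:psi^2}); therefore $\psi X\neq0$ for every nonzero horizontal $X$, and the weaker assumption $\Ric\ge0$ still yields $\Ric^T(X',X')\ge 2|\psi X|^2>0$, so $\Ric^T$ is again positive definite.

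Finally, I would invoke the Bochner vanishing theorem \cite[Theorem 20.5]{Moroianu}: a compact Kähler manifold with positive definite Ricci tensor admits no nonzero holomorphic $(p,0)$-form for $p>0$. Applying it with $p=2$ to $B$ contradicts the nonzero holomorphic $(2,0)$-form $\omega$ produced in the first step, and the proposition follows. The only genuinely delicate point is securing the \emph{strict} positivity of $\Ric^T$: in the first case this is immediate from $\Ric>0$, whereas in the second case it rests entirely on the nondegeneracy of $\psi$ on $\D$, which is precisely the maximal-rank hypothesis and is exactly what compensates for relaxing $\Ric>0$ to $\Ric\ge0$.
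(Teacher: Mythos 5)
Your proof is correct and takes essentially the same route as the paper's: pass to the compact K\"ahler quotient $M/\xi$ using regularity, observe that the projected closed $(2,0)$-form is holomorphic and nonzero, transfer the curvature hypothesis via $\Ric^T(X',X')=\Ric(X,X)+2|\psi X|^2$ to get $\Ric^T>0$ in both cases, and contradict the Bochner-type vanishing theorem of \cite[Theorem 20.5]{Moroianu}. You merely make explicit some details the paper leaves implicit (holomorphicity of $\omega$ via the bidegree decomposition of $d$, and the identification $\operatorname{Ker}\psi=\langle\xi\rangle$ in the maximal-rank case), so there is nothing to correct.
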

\begin{proof}
	Assume that $(M,\f,\xi,\eta,g)$ is a compact regular, non cok\"ahler, anti-quasi-Sasakian manifold. Then $M/\xi$ is a compact K\"ahler manifold endowed with a non-vanishing closed (hence holomorphic) $(2,0)$-form, whose Ricci tensor field satisfies
	\begin{equation}\label{eq:RicT>0}
		\Ric^T(X',X')=\Ric(X,X)+2|\psi X|^2
	\end{equation}
	for every $X'\in\X(M/\xi)$ and $X\in\Gamma(\D)$ basic vector field projecting on $X'$ (see Proposition \ref{Prop:Ric}). In both the cases of the statement it turns out that $\Ric^T>0$, thus contradicting the above mentioned result.
\end{proof}

\begin{proposition}
	There exist no connected homogeneous anti-quasi-Sasakian manifolds of maximal rank with $\Ric\ge0$.
\end{proposition}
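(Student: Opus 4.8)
The plan is to reduce the statement to the fact, already used in the proof of the previous proposition, that a compact Kähler manifold with positive definite Ricci tensor admits no nonzero holomorphic $(p,0)$-form with $p>0$ (\cite[Theorem 20.5]{Moroianu}). Let $(M,\f,\xi,\eta,g)$ be a connected homogeneous aqS manifold of maximal rank with $\Ric\ge0$. Being homogeneous, $(M,g)$ is complete and the eigenvalues of its Ricci operator are constant along $M$.

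First I would exploit the maximality of the rank. By Remark \ref{Rmk:psi^2}, maximal rank means $\operatorname{Ker}\psi=\langle\xi\rangle$, so $\psi$ is nondegenerate on $\D$ and $|\psi X|^2>0$ for every nonzero $X\in\Gamma(\D)$. Feeding this into the transverse Ricci identity \eqref{eq:RicT>0}, namely $\Ric^T(X',X')=\Ric(X,X)+2|\psi X|^2$, together with the hypothesis $\Ric\ge0$, yields $\Ric^T>0$: the transverse Kähler structure has positive definite Ricci tensor. Moreover, by Theorem \ref{Thm:local submersion} and maximality of the rank, this transverse Kähler geometry carries a nonvanishing closed $2$-form $\omega$ of type $(2,0)$, which, being closed and $(2,0)$ on a Kähler manifold, is holomorphic.

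Next I would turn transverse positivity into compactness. Since $M$ is homogeneous, a transitive group $G$ of automorphisms preserves the whole structure $(\f,\xi,\eta,g)$, hence preserves $\xi$ and the Reeb foliation; consequently $G$ descends to a group acting transitively on the leaf space $B=M/\xi$, which is therefore a homogeneous Kähler manifold with $\Ric^T>0$. By homogeneity the eigenvalues of $\Ric^T$ are constant, so $\Ric^T\ge c>0$ for some $c$, and Myers' theorem forces $B$ to be compact. But then the compact Kähler manifold $B$ would carry the nonvanishing holomorphic $(2,0)$-form $\omega$, contradicting \cite[Theorem 20.5]{Moroianu}. This contradiction shows that no such $M$ exists.

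The main obstacle is the passage from the transverse geometry to a genuine \emph{compact} Kähler base, that is, the regularity of $\xi$: in a general homogeneous aqS manifold the Reeb foliation need not have a smooth manifold as leaf space. I would address this by observing that the Reeb foliation is a Riemannian (bundle-like) foliation, since $\xi$ is Killing by Proposition \ref{Prop:aqS-Killing}, and by invoking a transverse Bonnet--Myers argument: a transverse Ricci curvature bounded below by a positive constant bounds the transverse diameter and yields compactness of the space of leaves, to which the holomorphic $(2,0)$-form obstruction applies. Alternatively, one may first restrict to the regular case and note that a non-compact homogeneous manifold with $\Ric\ge0$ must split off a line by the Cheeger--Gromoll splitting theorem, so that the relevant geometry is carried by a compact homogeneous factor; the delicate point in either route is precisely the control of the non-regular behaviour of the Reeb foliation.
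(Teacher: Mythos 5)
Your proof has the right skeleton --- transverse positive Ricci via \eqref{eq:RicT>0}, Myers to force compactness of the base, then the non-existence of holomorphic $(2,0)$-forms on compact K\"ahler manifolds with positive Ricci \cite[Theorem 20.5]{Moroianu} --- and this is exactly the strategy of the paper. But the proof is not complete: the point you yourself flag as ``the main obstacle'', namely the regularity of $\xi$, is precisely the step that must be justified, and the paper closes it in one stroke that you miss. Since the aqS structure has maximal rank, $\eta\wedge(d\eta)^{2p}\neq 0$ everywhere with $\dim M=4p+1$, i.e.\ $\eta$ is a \emph{contact form}; homogeneity then makes $(M,\eta)$ a homogeneous contact manifold, and by the theorem of Boothby and Wang \cite{BW} homogeneous contact manifolds are regular. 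This is where the maximal rank hypothesis is really used: it converts homogeneity into regularity, so that $M/\xi$ is an honest homogeneous K\"ahler manifold carrying the nonvanishing closed (hence holomorphic) $(2,0)$-form, and the Myers--Moroianu contradiction applies verbatim.

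Neither of your two proposed substitutes actually closes this gap. The transverse Bonnet--Myers route for Riemannian foliations yields compactness of the space of leaf closures, which in general is not a manifold (not even an orbifold), so Moroianu's theorem --- a statement about compact K\"ahler \emph{manifolds} --- cannot be applied to it; one would instead have to rerun the Bochner argument in basic cohomology, and the basic Hodge theory needed for that requires compactness of $M$ itself, which is not among the hypotheses. The Cheeger--Gromoll route is also left hanging: knowing that a noncompact homogeneous $(M,g)$ with $\Ric\ge 0$ splits as (compact homogeneous)$\,\times\,\R^k$ says nothing, without further argument, about how the contact form and the tensors $\f,\psi$ interact with the splitting, so no contradiction is derived. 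As written, then, the argument reduces the proposition to an unproved claim; replacing both sketches by the citation of \cite{BW} (regularity of homogeneous contact manifolds) is what is needed, and it is also the shortest fix.
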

\begin{proof}
	Assume that $(M,\f,\xi,\eta,g)$ is a connected homogeneous aqS manifold of maximal rank, with $\Ric\ge0$.  Since homogeneous contact manifolds are regular \cite{BW}, the space of leaves $M/\xi$ is a homogeneous K\"ahler manifold with $\Ric^T>0$ by \eqref{eq:RicT>0}. Then Myers' theorem implies the compactness of $M/\xi$, and thus $M/\xi$ cannot admit any holomorphic form of type $(2,0)$. 
\end{proof}

\begin{corollary}
	There exist no connected $\eta$-Einstein homogeneous anti-quasi-Sasakian manifolds of maximal rank, with $\Ric=\mu g+\nu\eta\otimes\eta$ and $\mu\ge0$.
\end{corollary}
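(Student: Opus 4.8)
The plan is to reduce this to the immediately preceding proposition by observing that, for an aqS manifold, the hypothesis $\mu\ge0$ already forces the full nonnegativity $\Ric\ge0$. So I would argue by contradiction: suppose $(M,\f,\xi,\eta,g)$ is a connected $\eta$-Einstein homogeneous aqS manifold of maximal rank with $\Ric=\mu g+\nu\eta\otimes\eta$ and $\mu\ge0$, and aim to contradict the non-existence result just proved.

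First I would decompose an arbitrary tangent vector as $X=X_\D+\eta(X)\xi$ with $X_\D\in\Gamma(\D)$, so that $|X|^2=|X_\D|^2+\eta(X)^2$. Substituting into the $\eta$-Einstein equation yields
$$\Ric(X,X)=\mu|X|^2+\nu\eta(X)^2=\mu|X_\D|^2+(\mu+\nu)\eta(X)^2.$$
The first term is nonnegative because $\mu\ge0$. For the coefficient $\mu+\nu$ of the second term, I would evaluate the $\eta$-Einstein equation at $\xi$ and compare with (i) of Proposition \ref{Prop:Ric}, obtaining $\mu+\nu=\Ric(\xi,\xi)=|\psi|^2\ge0$. (Indeed, maximal rank forces the structure to be non-cok\"ahler, hence $\psi\ne0$ by Remark \ref{Rmk:psi^2}, and homogeneity makes $|\psi|^2$ a positive constant; but only $\mu+\nu\ge0$ is needed here.) Therefore $\Ric(X,X)\ge0$ for every $X\in\X(M)$, i.e.\ $\Ric\ge0$.

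At this point I would invoke the previous proposition, which asserts that there exist no connected homogeneous aqS manifolds of maximal rank with $\Ric\ge0$; the conclusion $\Ric\ge0$ just derived contradicts it, proving that no manifold satisfying the stated hypotheses can exist.

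There is essentially no genuine obstacle: the only direction in which positivity of $\Ric$ could fail is along $\xi$, and there Proposition \ref{Prop:Ric}(i) guarantees $\Ric(\xi,\xi)=|\psi|^2\ge0$ automatically. The entire content of the corollary is thus the bookkeeping observation that $\mu\ge0$ together with $\Ric(\xi,\xi)\ge0$ upgrades the $\eta$-Einstein condition to $\Ric\ge0$, after which the preceding proposition does all the work.
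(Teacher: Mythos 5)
Your proposal is correct and follows essentially the same route as the paper: both upgrade the $\eta$-Einstein condition with $\mu\ge0$ to full nonnegativity $\Ric\ge0$ by using $\Ric(\xi,\xi)=\mu+\nu=|\psi|^2\ge0$ (and the vanishing of the mixed terms), and then invoke the preceding proposition on homogeneous aqS manifolds of maximal rank. The only cosmetic difference is that you phrase the splitting via the decomposition $X=X_\D+\eta(X)\xi$, while the paper checks $\Ric$ on $\Gamma(\D)$ and on $\xi$ separately; the content is identical.
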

\begin{proof}
	In this case $\Ric(X,X)=\mu|X|^2\ge0$ for every $X\in\Gamma(\D)$. Being also $\Ric(\xi,X)=0$ and $\Ric(\xi,\xi)=\mu+\nu=|\psi|^2$, $\Ric\ge0$ and the result follows from the previous proposition.
\end{proof}

\section{The canonical connection}\label{Sec:connection}
In this section we prove the existence of a compatible metric connection with torsion for an aqS manifold, and we deduce some consequences on the Riemannian geometry of the manifold.\\

Recall that for every Riemannian manifold $(M,g)$, the difference $\bar\nabla-\nabla$ between any linear connection on $M$ and the Levi-Civita connection, is a $(1,2)$-tensor field $H$, which is related to the torsion $\bar{T}$ of $\bar\nabla$ by
\begin{equation}\label{eq:T-H}
	\bar{T}(X,Y)=H(X,Y)-H(Y,X)
\end{equation}
for every $X,Y\in\X(M)$. Let us denote with the same symbols the $(0,3)$-tensor fields derived from $H$ and $\bar{T}$ by contraction with the metric:
$$H(X,Y,Z)=g(H(X,Y),Z),\quad \bar{T}(X,Y,Z)=g(\bar{T}(X,Y),Z).$$
Recall also that $\bar\nabla$ is a metric connection (i.e. $\bar\nabla g=0$) if and only if
\begin{equation}\label{eq:H_metric conn.}
	H(X,Y,Z)+H(X,Z,Y)=0,
\end{equation}
in which case $\bar\nabla$ is completely determined by its torsion, by means of
$$2H(X,Y,Z)=\bar{T}(X,Y,Z)-\bar{T}(Y,Z,X)+\bar{T}(Z,X,Y).$$
A metric connection $\bar\nabla$ is said to have totally skew-symmetric torsion if $\bar{T}\in\Lambda^3(M)$, in which case $2H=\bar{T}$.
We refer the reader to \cite{Agricola_Srni} for a more complete treatment of the theory of connections with torsion.

In \cite{Fr.Iv.}, T. Friedrich and S. Ivanov proved that necessary and sufficient conditions for an almost contact metric manifold $(M,\f,\xi,\eta,g)$ to admit a metric connection $\bar\nabla$ with totally skew-symmetric torsion preserving the almost contact structure (i.e. such that $\bar\nabla\f=0$ and $\bar\nabla\xi=0$), are $\xi$ Killing and $N_\f\in\Lambda^3(M)$. This applies to a large class of almost contact metric manifolds, including quasi-Sasakian manifolds (since $N_\f=0$), but not to anti-quasi-Sasakian manifolds. Indeed, in this case we have that $N_\f=2d\eta\otimes\xi$, namely
$$N_\f(X,Y,Z)=2d\eta(X,Y)\eta(Z)=4g(X,\psi Y)\eta(Z).$$
Thus it can be easily seen that $N_\f$ is not a 3-form, unless $d\eta=0$. Hence, we conclude that an aqS manifold cannot admit a metric connection $\bar\nabla$ with totally skew-symmetric torsion and preserving the structure, unless it is cok\"ahler, in which case $\bar\nabla$ coincides with the Levi-Civita connection. However, we have the following:

\begin{theorem}
	Let $(M,\f,\xi,\eta,g)$ be an anti-quasi-Sasakian manifold. Then, there exists a metric connection $\bar\nabla$ such that $\bar\nabla\f=0$, $\bar\nabla\xi=0$, and the torsion $\bar{T}$ is totally skew-symmetric on $\D$ and satisfies $\bar{T}(\xi,\cdot)=0$.
	The connection $\bar\nabla$ is uniquely determined by $\bar\nabla=\nabla+H$, where
	\begin{equation}\label{eq:H}
		H(X,Y)=\eta(X)\psi Y+\eta(Y)\psi X+g(X,\psi Y)\xi,
	\end{equation}
	and its torsion is given by
	\begin{equation}\label{eq:torsion}
		\bar{T}(X,Y)=2g(X,\psi Y)\xi=d\eta(X,Y)\xi.
	\end{equation}
\end{theorem}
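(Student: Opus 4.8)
The plan is to produce the connection explicitly, verify it satisfies all four requirements, and then establish uniqueness by a difference argument. First I would set $\bar\nabla=\nabla+H$ with $H$ as in \eqref{eq:H} and check metricity: writing $H(X,Y,Z)=\eta(X)g(\psi Y,Z)+\eta(Y)g(\psi X,Z)+g(X,\psi Y)\eta(Z)$, condition \eqref{eq:H_metric conn.} follows immediately from the skew-symmetry of $\psi$, since each of the pairings $g(\psi Y,Z)+g(\psi Z,Y)$, $g(\psi X,Z)+g(X,\psi Z)$, $g(X,\psi Y)+g(\psi X,Y)$ vanishes. The condition $\bar\nabla\xi=0$ reduces to $H(X,\xi)=\psi X$, which holds because $\psi\xi=0$ and $\eta(\xi)=1$ give $H(X,\xi)=\psi X=-\nabla_X\xi$ by \eqref{eq:psi}. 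The torsion is then read off from \eqref{eq:T-H}: the $\psi$-terms in $H(X,Y)-H(Y,X)$ cancel and skew-symmetry of $\psi$ leaves $\bar T(X,Y)=2g(X,\psi Y)\xi=d\eta(X,Y)\xi$ by \eqref{eq:g-psi-deta}. Since this torsion is vertical, $\bar T(\xi,\cdot)=0$ (as $\eta\circ\psi=0$) and its total contraction over $\Gamma(\D)^{3}$ vanishes, so $\bar T$ is trivially totally skew-symmetric on $\D$.

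The one genuinely computational verification is $\bar\nabla\f=0$. I would expand $(\bar\nabla_X\f)Y=(\nabla_X\f)Y+H(X,\f Y)-\f H(X,Y)$, insert the formula for $(\nabla_X\f)Y$ from Theorem \ref{Thm:char.aqS}, and use $\f\psi=A=-\psi\f$ from \eqref{eq:fpsi=-psif} together with $\eta\circ\f=0$ and $\f\xi=0$. This gives $H(X,\f Y)=-\eta(X)AY-g(X,AY)\xi$ and $\f H(X,Y)=\eta(X)AY+\eta(Y)AX$, which cancel the three terms $2\eta(X)AY+\eta(Y)AX+g(X,AY)\xi$ of $(\nabla_X\f)Y$ exactly, so $(\bar\nabla_X\f)Y=0$.

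For uniqueness I would pass to the difference. Let $\bar\nabla'$ be any metric connection with $\bar\nabla'\f=0$, $\bar\nabla'\xi=0$, torsion totally skew-symmetric on $\D$ and $\bar T'(\xi,\cdot)=0$, and put $D:=\bar\nabla'-\bar\nabla$, a $(1,2)$-tensor. Metricity of both connections makes $D$ skew in its last two slots; $\bar\nabla'\xi=\bar\nabla\xi=0$ forces $D(\cdot,\xi)=0$, hence $D(\cdot,\cdot,\xi)=0$ by that skew-symmetry; and the torsion difference $S$ satisfies $S(\xi,Y)=D(\xi,Y)$, so $\bar T'(\xi,\cdot)=\bar T(\xi,\cdot)=0$ gives $D(\xi,\cdot)=0$. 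Thus $D$ is supported on $\Gamma(\D)^{3}$, where $S$ is totally skew-symmetric (both torsions are, and the explicit one vanishes on $\D^{3}$); the metric inversion $2D(X,Y,Z)=S(X,Y,Z)-S(Y,Z,X)+S(Z,X,Y)$ then yields $D=\tfrac12 S$, itself totally skew-symmetric on $\D$.

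The main obstacle is the last step: showing this totally skew-symmetric $D$ vanishes. The decisive extra input is $\bar\nabla'\f=\bar\nabla\f=0$, forcing $D(X,\f Y)=\f D(X,Y)$ and hence, on $\D$, the interchange relation $D(X,\f Y,Z)=-D(X,Y,\f Z)$. I would combine this with total skew-symmetry: it first yields $D(\f X,Y,Z)=D(X,\f Y,Z)$, then $D(\f X,\f Y,Z)=-D(X,Y,Z)$ and the joint invariance $D(X,\f Y,\f Z)=D(X,Y,Z)$; feeding these into the cyclic rearrangement $D(X,\f Y,\f Z)=D(\f Y,\f Z,X)=-D(Y,Z,X)=-D(X,Y,Z)$ forces $D(X,Y,Z)=-D(X,Y,Z)$, so $D\equiv0$ on $\D$ and therefore everywhere. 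Conceptually this is the same tension exploited throughout the paper, the $\f$-anti-invariance of $d\eta$, resurfacing at the level of torsion: a $\f$-compatible totally skew-symmetric horizontal $3$-tensor cannot survive, which is precisely why the skew-symmetry of $\bar T$ must be confined to $\D$ rather than hold on all of $M$.
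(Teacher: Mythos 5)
Your proposal is correct, and it differs from the paper's proof mainly in organization rather than in mathematical substance. The paper writes out only the ``derivation'' direction: assuming a connection with the stated properties, it pins down $H(\xi,\cdot)$, $H(\cdot,\xi)$ and $H(\cdot,\cdot,\xi)$ from $\bar\nabla\xi=0$, $\bar{T}(\xi,\cdot)=0$ and metricity, observes that on $\Gamma(\D)$ the tensor $H$ is totally skew-symmetric (since $2H=\bar T$ there) and satisfies $H(X,\f Y,Z)+H(X,Y,\f Z)=0$ (from $\bar\nabla\f=0$ and \eqref{eq:aqS.char}), and kills the horizontal part by a cyclic-sum argument followed by the substitution $Y\mapsto\f Y$; the verification that $\nabla+H$ really enjoys all the stated properties is left implicit. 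You instead carry out that existence verification explicitly (your computation of $\bar\nabla\f=0$ via $H(X,\f Y)=-\eta(X)AY-g(X,AY)\xi$ and $\f H(X,Y)=\eta(X)AY+\eta(Y)AX$ is exact), and you run uniqueness through the difference tensor $D=\bar\nabla'-\bar\nabla$ of two candidate connections rather than through $H$ itself. The algebraic core is nevertheless identical: your interchange relation $D(X,\f Y,Z)=-D(X,Y,\f Z)$ for a totally skew-symmetric horizontal tensor is precisely the paper's relation for $H$, and both arguments conclude by playing it against total skew-symmetry (the paper via the cyclic sum, you via the chain $D(\f X,\f Y,Z)=-D(X,Y,Z)$, $D(X,\f Y,\f Z)=D(X,Y,Z)$ and a cyclic permutation). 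What your organization buys is completeness --- the existence half of the statement is actually proved on the page, and the difference argument cleanly isolates which hypothesis forces which piece of $D$ to vanish; what the paper's buys is economy --- a single derivation simultaneously produces the explicit formula \eqref{eq:H}, the torsion \eqref{eq:torsion}, and uniqueness.
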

\begin{proof}
	Let $\bar\nabla$ be a metric connection with torsion $\bar{T}$ as in the statement, and let $H:=\bar\nabla-\nabla$. Equation \eqref{eq:T-H} and conditions $\bar{T}(\xi,\cdot)=0$ and $\bar\nabla\xi=0$ imply that $H(\xi,X)=H(X,\xi)=\psi X$ for every $X\in\X(M)$. Thus, since $\bar{\nabla}g=0$ (i.e. \eqref{eq:H_metric conn.} holds), one has
	$$H(X,Y,\xi)=-H(X,\xi,Y)=-H(\xi,X,Y)=-g(\psi X,Y)=\frac12d\eta(X,Y).$$
	Moreover, $$(\bar\nabla_X\f)Y=(\nabla_X\f)Y+H(X,\f Y)-\f H(X,Y),$$
	and, being $\bar\nabla\f=0$, for every $X,Y,Z\in\X(M)$ we have:
	$$H(X,\f Y,Z)+H(X,Y,\f Z)=-g((\nabla_X\f)Y,Z).$$
	In particular, for $X,Y,Z\in\Gamma(\D)$, by equation \eqref{eq:aqS.char} we get:
	\begin{equation}\label{eq:H(.f.)}
		H(X,\f Y,Z)+H(X,Y,\f Z)=0.
	\end{equation}
	Since $\bar{T}$ is totally skew-symmetric on $\D$, $H$ is a 3-form on $\D$. Then, taking the cyclic sum of \eqref{eq:H(.f.)} over $X,Y,Z\in\Gamma(\D)$, one has:
	\begin{eqnarray*}
		0&=&\mathfrak{S}_{XYZ}\{H(X,\f Y,Z)+H(X,Y,\f Z)\}\\
		&=&2\{H(X,Y,\f Z)+H(Y,Z,\f X)+H(Z,X,\f Y)\}.
	\end{eqnarray*}
	Replacing $Y$ by $\f Y$ and using again \eqref{eq:H(.f.)}:
	$$H(X,\f Y,\f Z)+H(\f Y,Z,\f X)-H(Z,X,Y)=H(X,Y,Z)=0$$
	for every $X,Y,Z\in\Gamma(\D)$. Therefore, together with $H(X,Y,\xi)=\frac12d\eta(X,Y)$ we obtain
	$$H(X,Y)=\frac12d\eta(X,Y)\xi\quad \forall X,Y\in\Gamma(\D).$$
	Finally, since $H(\xi,\xi)=0$, for any $X,Y\in\X(M)$
	\begin{eqnarray*}
		H(X,Y)&=&H(\f^2X,\f^2Y)-\eta(X)H(\xi,\f^2Y)-\eta(Y)H(\f^2X,\xi)\\
		&=&\frac12d\eta(\f^2X,\f^2Y)\xi-\eta(X)\psi\f^2Y-\eta(Y)\psi\f^2X\\
		&=&g(X,\psi Y)\xi+\eta(X)\psi Y+\eta(Y)\psi X,
	\end{eqnarray*}
	which proves \eqref{eq:H}. From this and equation \eqref{eq:T-H}, \eqref{eq:torsion} follows.
\end{proof}

We will call the above connection $\bar\nabla$ the \textit{canonical connection} of the aqS structure $(\f,\xi,\eta,g)$. Notice that if the structure has maximal rank, $\bar\nabla$ coincides with the canonical connection defined in \cite{Dileo-Lotta} on a contact manifold $(M,\eta)$ with admissible metric $g$, for which the Reeb vector field $\xi$ is Killing.

In the following we investigate some geometric properties of aqS manifolds under the additional assumption that $\bar\nabla$ has parallel torsion, i.e. $\bar\nabla\bar{T}=0$. In view of \eqref{eq:torsion}, this is equivalent to $\bar\nabla\psi=0$, in which case $\bar\nabla$ also parallelizes $A=\f\psi$. Now, notice that
\begin{eqnarray*}
	(\bar\nabla_X\psi)Y&=&(\nabla_X\psi)Y+H(X,\psi Y)-\psi H(X,Y)\\
	&=&(\nabla_X\psi)Y+g(X,\psi^2Y)\xi-\eta(Y)\psi^2X,
\end{eqnarray*}
and thus $\bar\nabla\psi=0$ if and only if
\begin{equation}\label{eq:nablapsi}
	(\nabla_X\psi)Y=-g(X,\psi^2Y)\xi+\eta(Y)\psi^2X.
\end{equation}

There are two special cases in which $\bar\nabla\psi=0$.
\begin{enumerate}[leftmargin=*]
	\item
For a double aqS-Sasakian manifold $(M,\f_i,\xi,\eta,g)$ ($i=1,2,3$) the canonical connections associated to the two aqS structures $(\f_1,\xi,\eta,g)$ and $(\f_2,\xi,\eta,g)$ coincide. This is the metric connection $\bar\nabla$ with torsion $\bar{T}(X,Y)=d\eta(X,Y)\xi$, which obviously parallelizes $\psi=\f_3=\f_1\f_2$. In fact $\bar\nabla$ is the Tanaka-Webster connection of the Sasakian structure $(\f_3,\xi,\eta,g)$.
	
	\item In the weighted Heisenberg Lie group $G$ discussed in Section \ref{Ex:w-Heisenberg} the canonical connections associated to the aqS structures $(\f_1,\xi,\eta,g)$ and $(\f_2,\xi,\eta,g)$ coincide with the connection $\bar\nabla$ such that
	$\bar\nabla_XY=0$ for every $X,Y\in\mathfrak{g}$. Indeed, one can easily see that $\bar\nabla$ parallelizes all the structure tensor fields and the torsion is given by $$\bar{T}(X,Y)=-[X,Y]=-\eta([X,Y])\xi=d\eta(X,Y)\xi$$
	for every $X,Y\in\mathfrak{g}$. In particular, since $\bar\nabla\bar{T}=0$, one has $\bar\nabla\psi=0$.
\end{enumerate}
\medskip

Now, given an anti-quasi-Sasakian manifold $(M,\f,\xi,\eta,g)$ of rank $4p+1$, let us recall that $\dim M=2q+4p+1$ and the tangent bundle $TM$ splits into the orthogonal sum of two $\f$-invariant distributions $$TM=\mathcal{E}^{2q}\oplus \mathcal{E}^{4p+1},$$
where $\mathcal{E}^{2q}=\operatorname{Ker}(\psi|_\D)$. Next we show that $\bar\nabla\psi=0$ is a sufficient condition for the manifold to be locally decomposable as a Riemannian product.

\begin{theorem}\label{Thm:decomposition}
	Let $(M,\f,\xi,\eta,g)$ be an anti-quasi-Sasakian manifold with  $\bar\nabla\psi=0$. Then:
	\begin{itemize}
		\item[(i)] the distributions $\mathcal{E}^{2q},\ \mathcal{E}^{4p+1}$
		are integrable with totally geodesic leaves and $M$ is locally isometric to a Riemannian product $N^{2q}\times M^{4p+1}$ of a K\"ahler manifold  and an anti-quasi-Sasakian manifold of maximal rank;
		\item[(ii)] if $M$ is connected, then $\psi^2$ has constant eigenvalues. Denoting by $\D_{\mu}$ the eigendistribution of $\psi^2$ associated to a nonvanishing eigenvalue $\mu$, $\left<\xi\right>\oplus\D_{\mu}$ is integrable with totally geodesic leaves. Moreover, every leaf is endowed with a double aqS-Sasakian structure, up to a homothetic deformation.
	\end{itemize}
\end{theorem}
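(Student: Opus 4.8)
The plan is to exploit that $\bar\nabla\psi=0$ renders all the relevant distributions parallel for the canonical connection $\bar\nabla$, and then to transfer the parallelism to the Levi-Civita connection $\nabla$ by controlling the contorsion tensor $H$ of \eqref{eq:H}. First I would note that $\bar\nabla\psi=0$ forces $\bar\nabla\psi^2=0$ and $\bar\nabla A=0$, so that $\operatorname{Ker}\psi=\langle\xi\rangle\oplus\mathcal{E}^{2q}$ and $\operatorname{Im}\psi$ are $\bar\nabla$-parallel; combined with $\bar\nabla\xi=0$ this makes both $\mathcal{E}^{2q}=\operatorname{Ker}(\psi|_\D)$ and $\mathcal{E}^{4p+1}=\langle\xi\rangle\oplus\operatorname{Im}(\psi|_\D)$ orthogonal, complementary, and $\bar\nabla$-parallel.

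The key step is to pass from $\bar\nabla$ to $\nabla$ using $\nabla_XY=\bar\nabla_XY-H(X,Y)$, showing directly that each distribution is $\nabla$-parallel. If $Y\in\Gamma(\mathcal{E}^{2q})$ then $\eta(Y)=0$ and $\psi Y=0$, so $H(X,Y)=0$ for every $X\in\X(M)$ and hence $\nabla_XY=\bar\nabla_XY\in\Gamma(\mathcal{E}^{2q})$; if $Y\in\Gamma(\mathcal{E}^{4p+1})$ then $\psi X,\psi Y\in\operatorname{Im}\psi$ and $\xi\in\mathcal{E}^{4p+1}$, so every term of $H(X,Y)$ lies in $\langle\xi\rangle\oplus\operatorname{Im}\psi=\mathcal{E}^{4p+1}$ for every $X$. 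Both distributions are therefore $\nabla$-parallel, hence integrable with totally geodesic leaves, and the local de Rham theorem yields the isometry $M\cong N^{2q}\times M^{4p+1}$.

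To identify the factors I would restrict \eqref{eq:aqS.char} to the leaves, using that they are totally geodesic, so that $\nabla$ restricts to their Levi-Civita connections. On a leaf of $\mathcal{E}^{2q}$ every term of \eqref{eq:aqS.char} vanishes (the $\eta$-terms because $\mathcal{E}^{2q}\subset\D$, and $AX=\f\psi X=0$), so $(\f,g)$ restricts to a K\"ahler structure. On a leaf of $\mathcal{E}^{4p+1}$ the operator $A=\f\psi$ preserves the distribution and remains skew-symmetric and anticommuting with $\f$, so Theorem \ref{Thm:char.aqS} identifies the restricted structure as anti-quasi-Sasakian; since $\psi$ is nondegenerate on $\operatorname{Im}\psi$, its rank equals $4p+1=\dim M^{4p+1}$, i.e. it is of maximal rank. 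This proves (i).

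For (ii) I would first use that $\bar\nabla$ is metric, so parallel transport is a $g$-isometry commuting with the $\bar\nabla$-parallel symmetric tensor $\psi^2$; connectedness then forces the eigenvalues of $\psi^2$ to be constant and each eigendistribution $\D_\mu$ to be $\bar\nabla$-parallel. For a nonzero eigenvalue $\mu=-\lambda^2$, the distribution $\langle\xi\rangle\oplus\D_\mu$ is $\bar\nabla$-parallel, and by the $\psi$-invariance of $\D_\mu$ recorded in Remark \ref{Rmk:psi^2} the same contorsion computation shows it is $\nabla$-parallel, hence integrable with totally geodesic leaves. On each leaf one has $\psi^2|_\D=-\lambda^2 I$ with $\lambda$ constant, so Remark \ref{Rmk:scaling} produces, after a homothetic deformation, a double aqS-Sasakian structure. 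The main difficulty I anticipate is not geometric but a matter of bookkeeping: verifying that the splitting $TM=\mathcal{E}^{2q}\oplus\mathcal{E}^{4p+1}$ and the restricted operators are exactly the $\f$-invariant, $\psi$-adapted pieces claimed, so that \eqref{eq:aqS.char}, Theorem \ref{Thm:char.aqS} and Remark \ref{Rmk:scaling} apply verbatim on the leaves.
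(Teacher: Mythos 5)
Your proposal is correct and follows essentially the same route as the paper: exploit $\bar\nabla\psi=0$ to make $\mathcal{E}^{2q}$, $\mathcal{E}^{4p+1}$ and $\left<\xi\right>\oplus\D_\mu$ parallel for the canonical connection, transfer to the Levi-Civita connection through the contorsion tensor $H$ of \eqref{eq:H} (the paper performs this transfer explicitly in part (ii), and for part (i) does the equivalent computation directly via \eqref{eq:nablapsi}), then conclude with the local de Rham splitting, Theorem \ref{Thm:char.aqS} for the factors, and Remark \ref{Rmk:scaling} for the leaves of $\left<\xi\right>\oplus\D_\mu$. One small caution: in (ii) the distribution $\left<\xi\right>\oplus\D_\mu$ is \emph{not} $\nabla$-parallel in all directions (e.g. $\nabla_X\xi=-\psi X$ exits it when $X$ lies in a different eigendistribution, the culprit in your formula being the term $\eta(Y)\psi X$ of $H(X,Y)$); your contorsion computation is valid only for $X,Y$ both tangent to it, i.e. it proves autoparallelness — but this is precisely what ``integrable with totally geodesic leaves'' requires, so the conclusion stands.
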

\begin{proof}
	First we show that for all $X\in\X(M)$ and $Y\in\Gamma(\mathcal{E}^{2q})$, $\nabla_XY\in\Gamma(\mathcal{E}^{2q})$. Indeed, being $\psi Y=0$ and $\eta(Y)=0$, we have:
	$$\eta(\nabla_XY)=X(\eta(Y))-g(\nabla_X\xi,Y)=g(\psi X,Y)=-g(X,\psi Y)=0;$$
	furthermore, since $\bar\nabla\psi=0$, applying \eqref{eq:nablapsi}, we get:
	$$\psi\nabla_XY=-(\nabla_X\psi)Y+\nabla_X\psi Y=g(X,\psi^2Y)\xi -\eta(Y)\psi^2X=0.$$
	In particular $\mathcal{E}^{2q}$ turns out to be integrable with totally geodesic leaves. The same holds for $\mathcal{E}^{4p+1}$ since for every $X,Z\in\Gamma(\mathcal{E}^{4p+1})$ and $Y\in\Gamma(\mathcal{E}^{2q})$, one has:
	$$g(\nabla_XZ,Y)=X(g(Z,Y))-g(Z,\nabla_XY)=0,$$
	where each term vanishes because of the orthogonality of the distributions $\mathcal{E}^{2q}$ and $\mathcal{E}^{4p+1}$. Therefore $M$ turns out to be locally isometric to the Riemannian product of a K\"ahler manifold $N^{2q}$ tangent to $\mathcal{E}^{2q}$, and an aqS manifold of maximal rank $M^{4p+1}$, tangent to $\mathcal{E}^{4p+1}$.
	
	Concerning (ii), $\bar\nabla\psi=0$ implies that the eigenvalues of $\psi^2$ are constant and $\left<\xi\right>\oplus\D_\mu$ is a $\bar\nabla$-parallel distribution. Moreover, being $\nabla=\bar\nabla-H$, with $H$ as in $\eqref{eq:H}$, one immediately gets that $\left<\xi\right>\oplus\D_\mu$ is integrable with totally geodesic leaves. Finally, the fact that the leaves of $\left<\xi\right>\oplus\D_{\mu}$ are endowed with a double aqS-Sasakian structure is consequence of Remark \ref{Rmk:scaling}. 
\end{proof}
\smallskip

\begin{remark}
	In \cite{Kanemaki1,Kanemaki2} a similar decomposition theorem has been proved in the quasi-Sasakian case. We already mentioned that for a quasi-Sasakian manifold $(M^{2n+1}\f,\xi,\eta,g)$ the covariant derivative of $\f$ is expressed, by means of equation \eqref{qS}, in terms of a $(1,1)$-tensor field $A=-\f\circ\nabla\xi+k\eta\otimes\xi$. For $k=1$, if $A$ is parallel with respect to the Levi-Civita connection and has constant rank $2p+1$ ($1\le p\le n-1$), then $M$ is locally a product of a K\"ahler manifold $N^{2q}$ and a Sasakian manifold $M^{2p+1}$.

	For an anti-quasi-Sasakian manifold, requiring the parallelism of $\psi$ or $A$ with respect to the Levi-Civita connection implies that $\psi=A=0$, i.e. the manifold is cok\"ahler. Indeed, assuming $\nabla\psi=0$, applying the first equation in Lemma \ref{Lemma:identita nablaPsi} for $Y=\xi$, one has that for every $X,Z\in\X(M)$, $g(\psi^2X,\f Z)=0$ which means $\psi^2=0$ and hence $\psi=0$. Assuming $\nabla A=0$, from the third equation in \eqref{3nabla}, $\psi A=0$, and by \eqref{eq:psi A=-Apsi}, $A^2\varphi=0$, which gives $A=0$.
\end{remark}
\medskip

We prove that for an anti-quasi-Sasakian manifold the condition $\bar\nabla\psi=0$ is not compatible with local Riemannian symmetry.

\begin{theorem}
	There exist no connected, locally symmetric, non cok\"ahler, anti-quasi-Sasakian manifolds with $\bar\nabla\psi=0$.
\end{theorem}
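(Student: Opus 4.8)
The plan is to argue by contradiction: suppose $(M,\f,\xi,\eta,g)$ is connected, locally symmetric, non-cok\"ahler and satisfies $\bar\nabla\psi=0$. First I would exploit that local symmetry ($\nabla R=0$) forces the Ricci operator $Q$, defined by $g(QX,Y)=\Ric(X,Y)$, to be parallel: $\nabla Q=0$. Since $\bar\nabla\psi=0$ and $M$ is connected, Theorem \ref{Thm:decomposition}(ii) guarantees that the eigenvalues of $\psi^2$ are constant, so that $c:=|\psi|^2=-\operatorname{tr}\psi^2$ is a constant, and $c>0$ because the manifold is not cok\"ahler (Remark \ref{Rmk:psi^2}). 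Recalling from Proposition \ref{Prop:Ric} that $Q\xi=|\psi|^2\xi=c\,\xi$, and that $\psi=-\nabla\xi$ gives $\nabla_Z\xi=-\psi Z$, I would differentiate $Q\xi=c\,\xi$ along an arbitrary $Z$ and use $\nabla Q=0$ (so that $\nabla_Z(Q\xi)=Q(\nabla_Z\xi)$) to get
\begin{equation*}
	-Q\psi Z=Q(\nabla_Z\xi)=\nabla_Z(Q\xi)=\nabla_Z(c\,\xi)=-c\,\psi Z,
\end{equation*}
hence $Q\psi=c\,\psi$. Thus $Q$ acts as $c\,\mathrm{Id}$ on $\operatorname{Im}\psi=\bigoplus_{\mu\neq0}\D_\mu$, and in particular $\Ric(X,X)=c>0$ for every unit eigenvector field $X\in\D_\mu$ with $\mu\neq0$.

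On the other hand, I would compute the very same Ricci curvature from the transverse geometry. By Theorem \ref{Thm:decomposition}(ii) each leaf of $\langle\xi\rangle\oplus\D_\mu$ (with $\mu\neq0$) carries, up to a homothetic deformation, a double aqS-Sasakian structure, which is transversely hyperk\"ahler and hence transversely Ricci-flat (Theorem \ref{Thm:Sp(n)-struct.} and Remark \ref{Rmk:scaling}). Transporting this to the base, the restriction of the transverse Ricci tensor $\Ric^T$ to the eigendistribution $\D_\mu$ must vanish. Feeding $\Ric^T|_{\D_\mu}=0$ into identity (iii) of Proposition \ref{Prop:Ric}, for a unit $X\in\D_\mu$ I obtain
\begin{equation*}
	\Ric(X,X)=\Ric^T(X',X')-2|\psi X|^2=-2\mu^2<0.
\end{equation*}
Comparing with $\Ric(X,X)=c>0$ from the previous paragraph yields the contradiction $-2\mu^2=c=|\psi|^2>0$. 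Hence no nonvanishing eigenvalue of $\psi^2$ can occur, i.e.\ $\psi=0$ and $M$ is cok\"ahler, contrary to the hypothesis.

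The delicate point, which I expect to require the most care, is the passage from the \emph{leafwise} transverse Ricci-flatness supplied by Theorem \ref{Thm:decomposition} and Remark \ref{Rmk:scaling} to the statement $\Ric^T|_{\D_\mu}=0$ on the base $M/\xi$. To bridge this gap I would show that the projected eigendistributions are parallel for the Levi-Civita connection of the K\"ahler base. Writing $\nabla=\bar\nabla-H$ with $H$ as in \eqref{eq:H}, one uses that for horizontal fields $H(X,Y)=g(X,\psi Y)\xi$ is purely vertical and vanishes whenever $X,Y$ lie in distinct eigenspaces, since $\psi$ preserves each $\D_\mu$; combined with the $\bar\nabla$-parallelism of $\D_\mu$ this shows $\nabla_XY\in\D_\mu$ whenever $Y\in\Gamma(\D_\mu)$ and $X$ is horizontal. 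Through the O'Neill formalism (as in Proposition \ref{Prop:Ric}) this descends to the parallelism of the projected distribution on $M/\xi$, so that $M/\xi$ is locally a Riemannian product of K\"ahler factors, one per eigenvalue; the factor attached to $\mu\neq0$ is precisely the transversely hyperk\"ahler, Ricci-flat base of the corresponding leaf, which gives $\Ric^T|_{\D_\mu}=0$. Once this is established, the sign contradiction above completes the proof.
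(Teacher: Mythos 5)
Your proof is correct, but it takes a genuinely different route from the paper's. The paper restricts to a maximal integral leaf $N$ of $\langle\xi\rangle\oplus\D_\mu$ given by Theorem \ref{Thm:decomposition}(ii): being totally geodesic, $N$ inherits local symmetry and (up to homothety) carries a double aqS-Sasakian structure, so Okumura's theorem (a locally symmetric Sasakian manifold has constant sectional curvature $1$) combined with Theorem \ref{Thm:constant curvature} yields the contradiction entirely inside the leaf. You instead use only the weaker consequence $\nabla\Ric=0$ of local symmetry: differentiating $Q\xi=|\psi|^2\xi$ gives $Q\psi=|\psi|^2\psi$, hence $\Ric>0$ on the nonzero eigendistributions of $\psi^2$, which you then contradict with the transverse Ricci-flatness coming from Theorem \ref{Thm:decomposition}(ii) and Remark \ref{Rmk:scaling}, fed into Proposition \ref{Prop:Ric}(iii). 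The paper's route buys brevity: the contradiction lives on the leaf, where Remark \ref{Rmk:scaling} applies directly, with no need to transfer information to the base $M/\xi$. Your route buys strength and economy of tools: it never invokes Okumura's theorem or Theorem \ref{Thm:constant curvature}, and since it only uses Ricci-parallelism it actually excludes connected, non-cok\"ahler aqS manifolds with $\bar\nabla\psi=0$ and $\nabla\Ric=0$, a strictly larger class than the locally symmetric ones; this strengthening is unavailable to the paper's method, because Ricci-parallelism, unlike local symmetry, does not pass to totally geodesic submanifolds. The price is the bridging step you isolate, which is indeed the delicate point but is carried out correctly: $\Lie_\xi\psi=0$ makes each $\D_\mu$ projectable, $\bar\nabla\psi=0$ together with $\nabla=\bar\nabla-H$ shows that the horizontal part of $\nabla_XY$ stays in $\D_\mu$, so the projected eigendistributions are parallel on the K\"ahler base, and the de Rham splitting identifies the factor corresponding to $\mu\neq0$ with the Ricci-flat (hyperk\"ahler) quotient of the leaf. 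Only one small slip: with the paper's convention that the nonvanishing eigenvalue $\mu$ of $\psi^2$ is negative, your final display should read $\Ric(X,X)=-2|\psi X|^2=2\mu<0$ (equivalently, write $\mu=-\lambda^2$ and get $-2\lambda^2$); the sign of the contradiction is unaffected.
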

\begin{proof}
	First we point out that a double aqS-Sasakian manifold cannot be locally symmetric. Indeed, a general result due to M. Okumura \cite{Okumura}, states that every locally symmetric Sasakian manifold has constant sectional curvature $1$. By Theorem \ref{Thm:constant curvature}, this is not compatible with the existence of the other two aqS structures in a double aqS-Sasakian manifold.
	
	Now, let $(M,\f,\xi,\eta,g)$ be a non cok\"ahler aqS manifold with $\bar\nabla\psi=0$, and assume that $(M,g)$ is a locally symmetric Riemannian space, i.e. $\nabla R=0$. Let $-\lambda^2$ be a non zero eigenvalue of $\psi^2$. By Theorem \ref{Thm:decomposition}, let $N$ be a maximal integral submanifold of $\langle\xi\rangle\oplus\D_{-\lambda^2}$. $N$ is totally geodesic and hence locally symmetric. Moreover, up to a homothetic deformation of the structure which preserves the local symmetry, $N$ is endowed with a double aqS-Sasakian structure, which is impossible in view of the initial remark.  
\end{proof}

A similar argument shows that the condition $\bar\nabla\psi=0$ is also not compatible with nonnegative sectional curvatures.
	
\begin{proposition}
	There exist no connected, non cok\"ahler, anti-quasi-Sasakian manifolds with nonnegative sectional curvature and such that $\bar\nabla\psi=0$.
\end{proposition}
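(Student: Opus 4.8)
The plan is to follow the scheme of the preceding theorem, with the scalar curvature playing the role that local symmetry played there. The key preliminary fact I would establish is that a double aqS-Sasakian manifold cannot carry a metric of nonnegative sectional curvature. This is immediate from the curvature computations already available: a double aqS-Sasakian structure satisfies $K(\xi,X)=1$, hence $\psi^2=-I+\eta\otimes\xi$ by Theorem \ref{Thm:K(X,xi)=1}, and Remark \ref{Rmk:Ric_lambda} (with $\lambda=1$) then gives scalar curvature $s=-4n<0$ on a manifold of dimension $4n+1$ with $n\ge1$. Since nonnegative sectional curvature forces $s\ge0$, no such metric can exist.

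Next I would reduce the general statement to this fact. Let $(M,\f,\xi,\eta,g)$ be a connected, non cok\"ahler aqS manifold with $\bar\nabla\psi=0$ and nonnegative sectional curvature. By Theorem \ref{Thm:decomposition}(ii) the eigenvalues of $\psi^2$ are constant, and since $M$ is not cok\"ahler there is a nonvanishing eigenvalue $-\lambda^2$; the associated distribution $\langle\xi\rangle\oplus\D_{-\lambda^2}$ is integrable with totally geodesic leaves, and each leaf $N$ carries a double aqS-Sasakian structure up to a homothetic deformation (Remark \ref{Rmk:scaling}). By Remark \ref{Rmk:psi^2} the eigenspace $\D_{-\lambda^2}$ has dimension a positive multiple of $4$, so $\dim N=4p'+1$ with $p'\ge1$, and $N$ is a genuine double aqS-Sasakian manifold after rescaling.

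Finally I would derive the contradiction. Since $N$ is totally geodesic in $M$, the Gauss equation shows that the sectional curvatures of $N$ (for planes tangent to $N$) coincide with those of $M$, so the induced metric on $N$ has nonnegative sectional curvature; a homothety $g'=\lambda^2 g$ multiplies sectional curvature by the positive constant $\lambda^{-2}$ and therefore preserves this sign, so the double aqS-Sasakian metric on $N$ has nonnegative sectional curvature as well. This contradicts the preliminary fact. I expect the only delicate points to be purely bookkeeping: checking that a nonvanishing eigenvalue genuinely occurs (which is exactly the non cok\"ahler hypothesis, by Remark \ref{Rmk:psi^2}) and that both the restriction to the totally geodesic leaf and the subsequent homothety preserve the sign of the curvature; the essential geometric input is entirely contained in the negative scalar curvature of double aqS-Sasakian manifolds.
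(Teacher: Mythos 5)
Your proposal is correct and follows essentially the same route as the paper: by Theorem \ref{Thm:decomposition}(ii) you pass to a totally geodesic leaf $N$ of $\langle\xi\rangle\oplus\D_{-\lambda^2}$, note that total geodesy forces nonnegative sectional (hence scalar) curvature on $N$, and contradict the negative scalar curvature coming from \eqref{eq:Ric_lambda}. The only cosmetic difference is that you first apply the homothety of Remark \ref{Rmk:scaling} to reduce to the case $\lambda=1$ before invoking the scalar curvature formula, whereas the paper applies Remark \ref{Rmk:Ric_lambda} directly to the unscaled structure on $N$; since that remark is itself proved via the same rescaling, the two arguments coincide in substance.
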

\begin{proof}
	Assume that $\psi^2$ has a non zero eigenvalue $-\lambda^2$, and consider a maximal integral submanifold $N$ of the distribution  $\langle\xi\rangle\oplus\D_{-\lambda^2}$. Being totally geodesic, $N$ has nonnegative sectional curvature, and hence nonnegative scalar curvature. On the other hand, $N$ is endowed with an aqS structure satisfying $\psi^2|_{\D_{-\lambda^2}}=-\lambda^2I$, and hence $N$ has negative scalar curvature according to \eqref{eq:Ric_lambda}.
\end{proof}

\end{document}